\documentclass[10pt]{amsart}

\usepackage{graphicx}
\usepackage[all]{xy}
\usepackage{amssymb}
\usepackage{latexsym}
\usepackage{amsmath}
\usepackage{mathrsfs}
\usepackage{array}
\usepackage{verbatim}
\usepackage{color}

\usepackage{tikz}
\usetikzlibrary{arrows}

\sloppy
\frenchspacing

\usepackage{hyperref}

\theoremstyle{plain}
\newtheorem{theorem}{Theorem}

\newtheorem{lemma}[theorem]{Lemma}
\newtheorem{corollary}[theorem]{Corollary}

\newtheorem{Def}[theorem]{Definition}

\newcommand{\Db}{{\rm D}^b}
\newcommand{\Cb}{{\rm C}^b}
\newcommand{\Kbp}{{\rm K}^{b,p}}

\renewcommand{\Im}{\mathrm{Im\,}}

\newcommand{\kk}{\mathbf{k}}

\DeclareMathOperator{\Hom}{\mathrm{Hom}}
\DeclareMathOperator{\RHom}{\mathrm{RHom}}
\DeclareMathOperator{\End}{\mathrm{End}}
\DeclareMathOperator{\Aut}{\mathrm{Aut}}
\DeclareMathOperator{\Out}{\mathrm{Out}}
\DeclareMathOperator{\Inn}{\mathrm{Inn}}
\DeclareMathOperator{\Pic}{\mathrm{Pic}}
\DeclareMathOperator{\TrPic}{\mathrm{TrPic}}

\DeclareMathOperator{\Ker}{\mathrm{Ker}}

\DeclareMathOperator{\Id}{\mathrm{Id}}
\DeclareMathOperator{\Sph}{\mathrm{Sph}}

\newcommand{\sB}{\mathcal{B}}

\newcommand{\sT}{\mathcal{T}}
\newcommand{\sX}{\mathcal{X}}
\newcommand{\sS}{\mathcal{S}}
\newcommand{\sI}{\mathcal{I}}

\newcommand{\sO}{\tau}

\newcommand{\lowerz}[1]{\left\lfloor#1\right\rfloor}

\numberwithin{theorem}{section}
\numberwithin{equation}{section}

\title[Groups generated by two twists]{Groups generated by two twists along spherical  sequences}

\author{Y. Volkov}

\address{}

\begin{document}

\begin{abstract}
We describe all groups that can be generated by two twists along spherical sequences in an enhanced triangulated category. It will be shown that with one exception such a group is isomorphic to an abelian group generated by not more than two elements, the free group on two generators or the braid group of one of the types $A_2$, $B_2$ and $G_2$ factorized by a central subgroup. The last mentioned subgroup can be nontrivial only if some specific linear relation between length and sphericity holds. The mentioned exception can occur when one has two spherical sequences of length $3$ and sphericity $2$.
In this case the group generated by the corresponding two spherical twists can be isomorphic to the nontrivial central extension of the symmetric group on three elements by the infinite cyclic group.
Also we will apply this result to give a presentation of the derived Picard group of selfinjective algebras of the type $D_4$ with torsion $3$ by generators and relations.
\end{abstract}

\maketitle

\section{Introduction}

Triangulated categories is a powerful tool that was studied by many mathematicians. They have applications in algebraic geometry, representation theory and many other parts of mathematics. The equivalences and autoequivalences of triangulated categories play here a special role. Motivated by occurrences of braid groups in symplectic geometry and by the Kontsevich's homological mirror conjecture, the authors of \cite{ST} introduced the notion of a twist endofunctor along an object of a good enough triangulated category. They have shown that twists along so called spherical objects are autoequivalences. Later in \cite{Anno} the general notions of a spherical functor and a twist along it were introduced. Note that by results of \cite{Seg} any autoequivalence can be realized as a twist along a spherical functor. The author of \cite{Efimov} considered spherical functors induced by spherical sequences of objects and show that such sequences behave very similar to spherical objects. In particular, one can define a $\Gamma$ configuration of spherical sequences  that under some conditions gives an action of the braid group of the type $\Gamma$. It was shown in the same paper that such an action determined by an $A_n$-configuration of $m$-spherical sequences of length $k$ is faithful for $m\ge 2k$. This result generalizes the original result on the faithfulness of the action of a braid group determined by an $A_n$-configuration of spherical objects proved in \cite{ST}. It was shown also in \cite{BT} that an action determined by a $\Gamma$-configuration of $2$-spherical objects is faithful for $\Gamma=A_n,D_n,E_6,E_7,E_8$. For the derived category of a Ginzburg algebra this result was generalized to an arbitrary $n\ge 2$ in \cite{WC}. At the same time an example from \cite{ST} shows that even $A_3$-configuration of $1$-spherical objects does not have to determine a faithful action of the braid group on $4$ strands. On  the other hand, $m$-spherical objects with $m\le 0$ deserve attention too. They were considered, for example, in \cite{CS1,CS2,CS3,HJY}. 

In this paper we consider the actions generated by two twists along spherical sequences. If there are no morphisms from one sequence to another, then it is known that twist functors commute, and hence the group under consideration is an abelian group generated by two elements. In other cases, we will show that this group is isomorphic to the free group on two generators with several exceptions. These exceptions take place in the case where one spherical sequence has length $k$ and sphericity $m$, the second one has length $rk$ and sphericity $rm$ for some $r\in\{1,2,3\}$, and there are exactly $rk$ morphisms from the first spherical sequence to the second one.
We will show that  the group generated by two autoequivalences under consideration is a factor of the braid group corresponding to the diagram $A_2$ if $r=1$, to the diagram $B_2$ if $r=2$, and to the diagram $G_2$ if $r=3$ by some subgroup $H$.
Except the cases $k=k'=3$, $m=m'=2$ and $k'=2k=4$, $m'=2m=2$, the subgroup $H$ is central and can be nontrivial only if $r=1$ and $3m=4k$, $r=2$ and $2m=3k$ or $r=3$ and $3m=5k$.
In the case $k=k'=3$, $m=m'=2$, the subgroup $H$ is either trivial or is generated as a normal subgroup by the relation equalizing the squares of standard generators. We will show that the last mentioned case really occurs in the derived category of a hereditary algebra of type $D_4$.
Thus, we will give a new example where a braid group action induced by twists along spherical sequences is extremely not faithful.
In the case $k'=2k=4$, $m'=2m=2$, the subgroup $H$ is either trivial or contains the commutator of the braid group. We will show that in the last mentioned case one gets an action of the group $(\mathbb{Z}\times\mathbb{Z})/(2t,-2t)$ for some integer $t$.
We will show also that this occurs in the derived category of a hereditary algebra of type $A_3$. Thus, we will give an example where two twists along spherical sequences generate an abelian group in a nontrivial way.
In particular, for two spherical objects that generate nonabelian group, the group under consideration is isomorphic either to the free group on two generators or to the braid group on $3$ strands even in the case of $1$-spherical objects.

Due to \cite{Ric}, if two derived categories of algebras over a field are equivalent, then there is an equivalence induced by a tensor product with a tilting complex of bimodules.
Such equivalences are closed under composition, and hence give a subgroup of the group of derived autoequivalences. This group is called {\it the derived Picard group} of an algebra and was first introduced in \cite{RZ} and \cite{Ye1}.
Later it was shown in \cite{Ye2} that this group is locally algebraic. Examples of computations of derived Picard groups can be found, for example, in \cite{MY,Miz}.

It was shown in \cite{Rie} that AR quiver of a selfinjective algebra of finite representation is isomorphic to $\mathbb{Z}\Gamma/G$ for $\Gamma\in\{A_n,D_n,E_6,E_7,E_8\}$ and some admissible subgroup $G$ of the automorphism group of $\mathbb{Z}\Gamma$.
Such a group $G$ is cyclic and generated by an element of the form $\tau^{qm_{\Gamma}}\phi$, where $\tau$ is the AR translation, $m_{\Gamma}$ is the Loewy length of the mesh category of $\mathbb{Z}\Gamma$, $q$ is some rational number and $\phi$ is an automorphism of $\Gamma$.
In this case one says that the corresponding selfinjective algebra has the type $(\Gamma, q, r)$, where $r$ is the order of $\phi$. Derived equivalent algebras have the same type. The diagram $\Gamma$ is called the {\it tree type}, the number $q$ is called {\it frequency} and the number $r$ is called {\it torsion order} of the corresponding algebra. It was shown in \cite{Asashiba} that all the possible types are $\big(A_n,\frac{k}{n},1\big)$, $(A_{2t+1},k,2)$, $\Big(D_n,\frac{k}{gcd(n,3)},1\Big)$, $(D_n,k,2)$, $(D_4,k,3)$, $(E_6,k,2)$ and $(E_l,k,2)$, where all numbers are integer and $6\le l\le 8$. Moreover, it was shown in the same work that all the types except the type $(D_{3n},\frac{1}{3},1)$ in characteristic $2$ determine the corresponding algebra uniquely modulo derived equivalence. The type $(D_{3n},\frac{1}{3},1)$ in characteristic $2$ corresponds to two derived equivalence classes of algebras.
The algebras of the type $(A_n,q,1)$ are exactly algebras derived equivalent to selfinjective Nakayama algebras. The derived Picard group for these algebras was described in \cite{VZ2}.
The faithfulness of an action of a braid group determined by an $A_n$-configuration of $0$-spherical objects played a crucial role in this description.
Using our results on the faithfulness of actions of braid groups generated by two spherical twists, we will describe here the derived Picard group of representation finite selfinjective algebras of the type $(D_4,k,3)$. It is is a unique series of algebras with torsion of order $3$. It was considered \cite{GV,GIV}, where its Hochschild cohomology was computed.

\section{Generalized braid groups and spherical twists}

Let us recall the definition of a generalized braid group. Sometimes these groups are called Artin groups.

\begin{Def}{\rm Let $M=(m_{i,j})_{1\le i,j\le s}$ be a symmetric square matrix such that each element $m_{i,j}$ is either some integer not less than $2$ or $\infty$. We associate to this matrix the graph $\Gamma_M$ with $s$ vertices numbered by integers from $1$ to $s$. Two distinct vertices $i$ and $j$ are connected by an edge if $m_{i,j}>2$. If $m_{i,j}>3$, then we also put the number $m_{i,j}$ on the corresponding edge. The {\it braid group of type $\Gamma_M$} is the group with $s$ standard generators $\sigma_1,\dots,\sigma_s$ and relations $(\sigma_i\sigma_j)^{t_{i,j}}\sigma_i^{m_{i,j}-2t_{i,j}}=(\sigma_j\sigma_i)^{t_{i,j}}\sigma_j^{m_{i,j}-2t_{i,j}}$ for all $1\le i<j\le s$ such that $m_{i,j}<\infty$, where $t_{i,j}=\lowerz{\frac{m_{i,j}}{2}}$. Whenever a braid group appears, we denote by $\sigma_i$ its standard generators.
}
\end{Def}

Thus, the braid group of type $1\frac{\phantom{1}\infty\phantom{1}}{} 2$ is the free group $F_2$ on two generators. We will be mainly interested in the braid groups of types $A_2=(1\frac{\phantom{040}}{}2)$, $B_2=(1\frac{\phantom{0}4\phantom{0}}{}2)$ and $G_2=(1\frac{\phantom{0}6\phantom{0}}{}2)$.
Note that all these braid groups have a cyclic center as any braid group of finite type. The center of the braid groups of the types $A_2$ and $G_2$ are generated by $(\sigma_1\sigma_2)^3$ and the center of the braid groups of the type $B_2$ is generated by $(\sigma_1\sigma_2)^2$. We denote the corresponding element generating the center by $\Delta_{A}$, $\Delta_B$ or $\Delta_G$ and the corresponding braid group by $\sB_A$, $\sB_B$ or $\sB_G$ respectively.

Another group that we will meet in this paper is the nontrivial central extension of the symmetric group on three elements by the infinite cyclic group. We will denote this group by $S_3^\mathbb{Z}$. It can be defined by generators and relation by the equality $S_3^\mathbb{Z}=\langle \sigma_1,\sigma_2\mid \sigma_1\sigma_2\sigma_1=\sigma_2\sigma_1\sigma_2,\sigma_1^2=\sigma_2^2\rangle$.

In this paper we will work with {\it algebraic triangulated categories} (see \cite{Efimov} and references there) over some fixed algebraically closed field $\kk$. In fact, all that we need is to take the cones of morphisms between exact functors and we may assume that our triangulated categories are equipped with any enhancement that allows to do this (see \cite{BT}). If $\sT$ is an algebraic triangulated category, then for each $X\in\sT$ we have the derived $\Hom$-functor $\RHom_\sT(X,-):\sT\rightarrow {\rm D}\kk$ with the left adjoint $-\otimes X:{\rm D}\kk\rightarrow \sT$. Here ${\rm D}\kk$ denotes the derived category of the category of $\kk$-linear spaces. If for each $Y\in\sT$ the complex $\RHom_\sT(Y,X)$ has finite dimensional total homology, then the functor $-\otimes X:{\rm D}\kk\rightarrow \sT$ has also the left adjoint $D\RHom_\sT(-,X):\sT\rightarrow\Db\kk$, where $D=\RHom_{\Db\kk}(-,\kk)$ is the usual duality on the bounded derived category $\Db\kk$ of  finite dimensional vector spaces. We will call the object $X\in\sT$ {\it twisting} if for each $Y\in\sT$ the complexes $\RHom_\sT(Y,X)$ and $\RHom_\sT(X,Y)$ have finite dimensional total homology.

Let us recall that a {\it Serre functor} on a triangulated $\sT$ with finite dimensional $\Hom$-spaces is an equivalence $\sS:\sT\rightarrow\sT$ commuting with the shift functor such that there is bifunctorial isomorphism $\phi_{F,G}:\Hom_\sT(F,G)\rightarrow D\Hom_\sT(G,\sS F)$ for any $F,G\in\sT$, where $D=\Hom_\kk(-,\kk)$ is the usual duality.
If a Serre functor exists, it is unique up to isomorphism, but we will not be interested in categories with Serre functor in this paper. Instead of this, we will be interested in objects in a more or less arbitrary triangulated category for which the Serre functor exists in a weak sense.

\begin{Def}{\rm The object $F\in\sT$ {\it admits Serre functor} if $\Hom_\sT(F,G)$ and $\Hom_\sT(G,F)$ are finite dimensional for any $G\in \sT$ and there exist $F'\in\sT$ and $\eta:\Hom_\sT(F,G)\rightarrow\kk$ such that the composition map $$\Hom_\sT(G,F')\times \Hom_\sT(F,G)\rightarrow \Hom_\sT(F,F')\xrightarrow{\eta} \kk$$ is a perfect pairing for any $G\in\sT$. In this case the pair $(F',\eta)$ satisfying this condition is unique modulo isomorphism. We will denote it by $(\sS F,\eta_F)$.
In particular, whenever we write $\sS F$ we mean that $F$ admits Serre functor.
We say that $F$ {\it admits inverse Serre functor} if there exists $F''$ admitting Serre functor such that $\sS F''=F$. Of course, such an object is unique modulo isomorphism and we denote it by $\sS^{-1}F$.
Whenever we write $\sS^k F$ for some $k\in\mathbb{Z}$, we assume that all the required powers of $\sS$ are defined on $F$.
}
\end{Def}

Note that if $F$ and $G$ admit Serre functor, then $F\oplus G$ admits Serre functor and one can set $\sS(F\oplus G)=\sS F\oplus\sS G$ and $\eta_{F\oplus G}=\eta_F\pi_{F}+\eta_G\pi_G$, where $\pi_F$ and $\pi_G$ are canonical projections from
\begin{multline*}\Hom_\sT(F\oplus G,\sS F\oplus\sS G)\\
\cong \Hom_\sT(F,\sS F)\oplus\Hom_\sT(F,\sS G)\oplus\Hom_\sT(G,\sS F)\oplus\Hom_\sT(G,\sS G)
\end{multline*}
to $\Hom_\sT(F,\sS F)$ and $\Hom_\sT(G,\sS G)$ correspondingly. For convenience, we introduce also $\sO:=\sS[-1]$, i.e. by $\sO^tX=\sS^{t} X[-t]$ by definition.


\begin{Def}{\rm The twisting object $E$ is called {\it $m$-spherical} if $\sO E\cong E[m-1]$ and the space $\oplus_{l\in\mathbb{Z}}\Hom_{\sT}(E[l],E)$ is two dimensional with the basis $\Id_E,f^E$, where $f^E\in \Hom_\sT(E[-m],E)$ and $f^E\circ f^E=0$ if $m=0$.
}
\end{Def}

\begin{Def}{\rm An {\it $m$-spherical sequence of length $k\ge 2$} is a collection of twisting objects $E_i$ ($i\in\mathbb{Z}/k\mathbb{Z}$) of the category $\sT$ such that $\sO E_{i}\cong E_{i+1}[m_i-1]$ for some integers $m_i$ with $\sum\limits_{i\in\mathbb{Z}/k\mathbb{Z}}m_i=m$ and
$$\Hom_\sT(E_i[l],E_j)=\begin{cases}
1,&\mbox{ if either $j=i$, $l=0$ or $j=i+1$, $l=-m_i$},\\
0,&\mbox{ otherwise}.
\end{cases}
$$
In the case where $k=2$ and $m=0$ we require also $E_1\not\cong E_0$. For each $i\in\mathbb{Z}/k\mathbb{Z}$ we choose some basic element in $\Hom_\sT(E_i[-m_i],E_{i+1})$ and denote it by $f^E_i$. The collection $(E_0)$ formed by one object is $m$-spherical sequence of length one if $E_0$ is an $m$-spherical object. In this case $f^E_0$ is the element  completing $\Id_{E_0}$ to the basis of $\Hom_{\sT}(E_0,E_0)$ from the definition of an $m$-spherical object.
}
\end{Def}

For an $m$-spherical sequence $(E_0,\dots,E_{k-1})$ of length $k$, we set $E=\oplus_{i\in\mathbb{Z}/k\mathbb{Z}}E_i$. Moreover, we will sometimes call $E$ an $m$-spherical sequence meaning that it is a direct some of members of such a sequence.
One has the adjoint functors $\RHom_\sT(E_i,-)$ and $-\otimes E_i$ for each $i\in\mathbb{Z}/k\mathbb{Z}$. The direct sum of counits of these adjunctions gives a morphism of functors $\oplus_{i\in\mathbb{Z}/k\mathbb{Z}}\RHom_\sT(E_i,-)\otimes E_i\xrightarrow{ev}\Id_\sT$.

\begin{Def}{\rm Given  an $m$-spherical sequence $E$, the cone endofunctor of the morphism $\oplus_{i\in\mathbb{Z}/k\mathbb{Z}}\RHom_\sT(E_i,-)\otimes E_i\xrightarrow{ev}\Id_\sT$ is called a {\it spherical twist} along the sequence $E$ and is denoted by $T_E$.
}
\end{Def}

Let us recall some basic properties of spherical twists. First of all, for any spherical sequence $E$, the functor $T_E$ is an autoequivalence whose inverse is the cocone of the direct sum of units $\Id_\sT\xrightarrow{ev'}\oplus_{i\in\mathbb{Z}/k\mathbb{Z}}D\RHom_\sT(-,E_i)\otimes E_i$. If $(E_0,\dots,E_{k-1})$ is an $m$-spherical sequence and $E'_i\cong E_{i+t}[l_i]$ for some integers $t$ and $l_i$ ($i\in\mathbb{Z}/k\mathbb{Z}$), then the sequence $(E_0',\dots,E_{k-1}')$ is $m$-spherical too and $T_{E'}\cong T_E$. We will write $E'\sim E$ in this case and $E'\not\sim E$ in the opposite case. Note also that if $E$ and $E'$ are spherical sequences and $E_i\cong E_j[l]$ for some integers $i,j,l$, then, 
using the condition $E_{i+1}\cong \sS E_i[-m_i]$, one gets $E'\sim E$. Since in the case $E'\sim E$ the group generated by $T_E$ and $T_{E'}$ coincides with the group generated by $T_E$, we will be concentrated on the case $E'\not\sim E$ in this paper.

Though we are interested in the action of $T_E$ and $T_{E'}$ on the whole category $\sT$, in major part of our proof we will consider their action on a smaller set. Namely, let us denote by $\Sph_{\sT}$ the equivalence classes of spherical sequences modulo the relation $\sim$. It is clear that the action of $T_E$ and $T_{E'}$ on $\sT$ induces an action of $T_E$ and $T_{E'}$ on $\Sph_{\sT}$. For a spherical sequence $F$ we will denote its class in $\Sph_{\sT}$ by $F$ too.

If $m_i$ ($i\in\mathbb{Z}/k\mathbb{Z}$) are numbers from the definition of an $m$-spherical sequence, then
\begin{equation}\label{twistonobj}
T_E(E_{i+1})=E_{i}[1-m_i]\cong \sO^{-1} E_{i+1}
\end{equation}
for $i\in\mathbb{Z}/k\mathbb{Z}$. In particular, $T_E^k(E)=E[k-m]$, and hence the group generated by $T_E$ is an infinite cyclic group if $m\not=k$. In the case $m=k$, the autoequivalence $T_E$ generates either the infinite cyclic group or a cyclic group of some order divisible by $k$.
Another known property of spherical twists is presented in the next lemma.

\begin{lemma}\label{comm} Let $\Phi$ be an autoequivalence of the category $\sT$ and $E$ be an $m$-spherical sequence in $\sT$. Then $\Phi T_E$ is naturally isomorphic to $T_{\Phi E}\Phi$.
\end{lemma}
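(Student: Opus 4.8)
The plan is to reduce the statement to the single fact that an exact $\kk$-linear autoequivalence carries the evaluation morphism defining $T_E$ to the one defining $T_{\Phi E}$, after which functoriality of cones in the enhancement finishes the argument. First I would record that $\Phi E$ is again an $m$-spherical sequence, so that $T_{\Phi E}$ is defined: since $\Phi$ is an equivalence commuting with the shift it preserves all the dimensions $\dim\Hom_\sT(E_i[l],E_j)$ and the twisting property, and since the defining perfect pairing of the weak Serre functor is transported along $\Phi$ one gets $\sS\Phi\cong\Phi\sS$, hence $\sO\,\Phi E_i\cong\Phi E_{i+1}[m_i-1]$ with the same integers $m_i$.

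The heart of the matter is to compare, after conjugation by $\Phi$, the two morphisms of functors
\[
\bigoplus_{i\in\mathbb{Z}/k\mathbb{Z}}\RHom_\sT(E_i,-)\otimes E_i\xrightarrow{ev}\Id_\sT,\qquad
\bigoplus_{i\in\mathbb{Z}/k\mathbb{Z}}\RHom_\sT(\Phi E_i,-)\otimes\Phi E_i\xrightarrow{ev_{\Phi E}}\Id_\sT .
\]
I would establish two natural isomorphisms. Since $\Phi$ is fully faithful, $\RHom_\sT(E_i,X)\cong\RHom_\sT(\Phi E_i,\Phi X)$ naturally in $X$, i.e. $\RHom_\sT(E_i,-)\circ\Phi^{-1}\cong\RHom_\sT(\Phi E_i,-)$. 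Next, $-\otimes E_i$ is by definition the left adjoint of $\RHom_\sT(E_i,-)$, so $\Phi\circ(-\otimes E_i)$ is the left adjoint of $\RHom_\sT(E_i,-)\circ\Phi^{-1}\cong\RHom_\sT(\Phi E_i,-)$; as $-\otimes\Phi E_i$ is the left adjoint of the same functor, uniqueness of adjoints gives a natural isomorphism $\Phi(W\otimes E_i)\cong W\otimes\Phi E_i$ for $W\in{\rm D}\kk$. Combining the two yields a natural isomorphism $\Phi\bigl(\RHom_\sT(E_i,X)\otimes E_i\bigr)\cong\RHom_\sT(\Phi E_i,\Phi X)\otimes\Phi E_i$.

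The main obstacle is to verify that under this identification the whiskering $\Phi\star ev$ agrees with $ev_{\Phi E}\star\Phi$, i.e. that $\Phi$ sends the evaluation of $E$ to the evaluation of $\Phi E$. The clean way is to note that $ev$ is the counit of the adjoint pair $(-\otimes E_i,\RHom_\sT(E_i,-))$, that $ev_{\Phi E}$ is the counit of $(-\otimes\Phi E_i,\RHom_\sT(\Phi E_i,-))$, and that conjugating an adjunction by the equivalence $\Phi$ transports its counit to the counit of the conjugated pair. Since, via the two isomorphisms above, the conjugated pair is precisely the standard adjunction for $\Phi E_i$, the uniqueness of the counit of a fixed adjoint pair forces $\Phi\star ev$ and $ev_{\Phi E}\star\Phi$ to coincide; summing over $i\in\mathbb{Z}/k\mathbb{Z}$ gives the compatibility for the full evaluation morphisms. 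This is the only genuinely non-formal point, as it requires tracking the adjunction data rather than just the underlying functors.

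Finally I would invoke the enhancement. By hypothesis cones of morphisms between exact functors are functorial on $\sT$, and the exact autoequivalence $\Phi$ commutes with the formation of such cones. Applying $\Phi$ to the defining triangle $\bigoplus_i\RHom_\sT(E_i,-)\otimes E_i\xrightarrow{ev}\Id_\sT\to T_E\to[1]$ and using the previous paragraph to identify its first two terms and connecting morphism with those of $\bigoplus_i\RHom_\sT(\Phi E_i,\Phi-)\otimes\Phi E_i\xrightarrow{ev_{\Phi E}\star\Phi}\Phi\to T_{\Phi E}\Phi\to[1]$, the uniqueness of cones produces a natural isomorphism $\Phi T_E\cong T_{\Phi E}\Phi$, as claimed.
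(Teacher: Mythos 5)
Your proof is correct. The paper itself offers no argument for this lemma --- it is stated as a ``known property of spherical twists'' and left unproved --- so there is nothing to compare against; your write-up supplies the standard argument (transport the evaluation counit along $\Phi$ via uniqueness of adjoints, then use functoriality of cones in the enhancement), which is exactly how this fact is established in the literature on spherical twists. The one point worth making explicit, consistent with the paper's own informal treatment of enhancements, is that the step $\RHom_\sT(E_i,-)\circ\Phi^{-1}\cong\RHom_\sT(\Phi E_i,-)$ and the functorial cone at the end both require $\Phi$ to lift to the enhancement, which holds for all autoequivalences to which the lemma is applied in the paper.
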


If $E$ is an $m$-spherical sequence of length $k$ and $E'$ is an $m'$-spherical sequence of length $k'$ such that $\Hom_\sT(E,E'[l])=0$ for any $l\in\mathbb{Z}$, then $T_E$ and $T_{E'}$ commute, and hence generate an abelian group. This group is isomorphic to $\mathbb{Z}\times\mathbb{Z}$ if $m\not=k$ and $m'\not=k'$ since $T_E^{ak}T_{E'}^{bk'}(E)=E[a(k-m)]$ and $T_E^{ak}T_{E'}^{bk'}(E')=E'[b(k'-m')]$. If $m=k$ or $m'=k'$, then the group generated by $T_E$ and $T_{E'}$ can be not isomorphic to $\mathbb{Z}\times\mathbb{Z}$ and can be even finite. We are not going to consider this problems in details here and will concentrate on the case where $\oplus_{l\in\mathbb{Z}}\Hom_\sT(E,E'[l])\not=0$.

Another situation considered in \cite{Efimov} occurs when $k=k'$, $m=m'$ and $\sum\limits_{l\in\mathbb{Z}}\dim_\kk\Hom_\sT(E,E'[l])=k$. In this case it can be deduced from Lemma \ref{comm} that $T_E$ and $T_{E'}$ satisfy the braid relation $T_ET_{E'}T_E=T_{E'}T_ET_{E'}$. Let us also recall the main result of \cite{Efimov} that generalizes the main result of \cite{ST}. We say that the $m$-spherical sequences $E^1,\dots,E^n$ of length $k$  form an {\it $A_n$-configuration} if $$\sum\limits_{l\in\mathbb{Z}}\dim_\kk\Hom_\sT(E^i,E^j[l])=\begin{cases}k,&\mbox{ if $|i-j|=1$},\\0,&\mbox{ if $|i-j|>1$}.\end{cases}$$
Then the main result of \cite{Efimov} says that if the $m$-spherical sequences $E^1,\dots,E^n$ of length $k$  form an $A_n$-configuration and $m\ge 2k$, then the group generated by $T_{E^1},\dots,T_{E^n}$ is isomorphic to the braid group on $n+1$ strands, i.e. the braid group of type $A_n$, where
$
A_n=\big(1\frac{\phantom{040}}{}2\frac{\phantom{040}}{}\cdots\frac{\phantom{040}}{}(n-1)\frac{\phantom{040}}{}n\big).
$

The main result of this paper is the following theorem.

\begin{theorem}\label{main}
Let $E$ be an $m$-spherical sequence of length $k$ and $E'$ be an $m'$-spherical sequence of length $k'$ such that $k\le k'$, $\sum\limits_{l\in\mathbb{Z}}\dim_\kk\Hom_\sT(E,E'[l])\not=0$ and $E\not\sim E'$.
\begin{enumerate}
\item\label{a2} Suppose that $k'=k$, $m'=m$ and $\sum\limits_{l\in\mathbb{Z}}\dim_\kk\Hom_\sT(E,E'[l])=k$.
\begin{itemize}
\item If $3m\not=4k$ and $(m,k)\not=(2,3)$, then the group generated by $T_E$ and $T_{E'}$ is isomorphic to the braid group of type $A_2$.
\item If $3m=4k$, then  the group generated by $T_E$ and $T_{E'}$ is isomorphic to the factor group of the braid group of type $A_2$ by the cyclic group generated by the element $\Delta_A^{t\frac{k}{gcd(k,3)}}$ for some $t\in\mathbb{Z}$.
\item If $m=2$ and $k=3$, then the group generated by $T_E$ and $T_{E'}$ is isomorphic either to the braid group of type $A_2$ or to the group $S_3^\mathbb{Z}$.
\end{itemize}
\item\label{b2} Suppose that $k'=2k$, $m'=2m$ and $\sum\limits_{l\in\mathbb{Z}}\dim_\kk\Hom_\sT(E,E'[l])=2k$.
\begin{itemize}
\item If $2m\not=3k$ and $(m,k)\not=(1,2)$, then the group generated by $T_E$ and $T_{E'}$ is isomorphic to the braid group of type $B_2$.
\item If $2m=3k$, then  the group generated by $T_E$ and $T_{E'}$ is isomorphic to the factor group of the braid group of type $B_2$ by the cyclic group generated by the element $\Delta_B^{t\frac{2k}{gcd(k-2,4)}}$ for some $t\in\mathbb{Z}$.
\item If $m=1$ and $k=2$, then the group generated by $T_E$ and $T_{E'}$ is isomorphic either to the braid group of type $B_2$ or to the group $(\mathbb{Z}\times \mathbb{Z})/(2t,-2t)$ for some integer $t$.
\end{itemize}
\item\label{g2} Suppose that $k'=3k$, $m'=3m$ and $\sum\limits_{l\in\mathbb{Z}}\dim_\kk\Hom_\sT(E,E'[l])=3k$.
\begin{itemize}
\item If $3m\not=5k$, then the group generated by $T_E$ and $T_{E'}$ is isomorphic to the braid group of type $G_2$.
\item If $3m=5k$, then  the group generated by $T_E$ and $T_{E'}$ is isomorphic to the factor group of the braid group of type $G_2$ by the cyclic group generated by the element $\Delta_G^{tk}$ for some $t\in\mathbb{Z}$.
\end{itemize}
\item\label{last} In all the remaining cases $T_E$ and $T_{E'}$ generate the free group $F_2$ on two generators.
\end{enumerate}
\end{theorem}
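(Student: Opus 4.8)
The plan is to treat the three configuration cases (\ref{a2})--(\ref{g2}) and the generic case (\ref{last}) by a common two-step scheme: first produce a surjection onto $\langle T_E,T_{E'}\rangle$ from the relevant group $\sB_A$, $\sB_B$, $\sB_G$ (or $F_2$), and then determine its kernel. The upper bound, i.e. the existence of the defining braid relation, should come entirely from Lemma \ref{comm} together with an analysis of the orbit of $E$ and $E'$ under $\langle T_E,T_{E'}\rangle$ acting on $\Sph_\sT$. Concretely, I would compute $E^{(2)}:=T_E(E')$ from the defining triangle $\oplus_i\RHom_\sT(E_i,E')\otimes E_i\to E'\to E^{(2)}$, verify that it is again an $m'$-spherical sequence, and iterate to produce a chain $E=E^{(0)},\,E'=E^{(1)},\,E^{(2)},\dots$ The hypothesis on $\sum_l\dim_\kk\Hom_\sT(E,E'[l])$ should force this chain to close up (up to $\sim$) after exactly $m_{1,2}\in\{3,4,6\}$ steps, matching the number of positive roots of $A_2$, $B_2$, $G_2$, with each step a twist of one sequence along its neighbor in the chain. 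Feeding the closing-up relation back through Lemma \ref{comm}, exactly as in the already-quoted derivation of $T_ET_{E'}T_E=T_{E'}T_ET_{E'}$, yields the braid relation of the corresponding type and hence the surjection.

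For the kernel I would pass through the Weyl group. Each finite-type braid group is a central extension of its Weyl group $W$ by the infinite cyclic group generated by $\Delta_A$, $\Delta_B$ or $\Delta_G$, so I would identify the images of $W$ and of the center separately. For $W$: the closed chain $E^{(0)},\dots,E^{(m_{1,2}-1)}$ is a finite set of pairwise inequivalent classes in $\Sph_\sT$ (here $E\not\sim E'$ and the $\Hom$-dimension data are used to rule out coincidences), on which the standard generators act as the Coxeter reflections, and showing this permutation action is faithful identifies the image of $W$ with $W$. For the center: using $\sO E_i\cong E_{i+1}[m_i-1]$ one gets that $\sO^k$ acts on $E$ as the shift $[m-k]$, and a parallel computation should show that $\Delta$ acts as a fixed power of $\sO$ composed with a shift (equivalently $\sS^a[-a]$ composed with a shift). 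The image of $\langle\Delta\rangle$ is then infinite cyclic unless this functor has finite order, i.e. unless the accumulated shift can vanish; a direct bookkeeping of the shifts should identify this vanishing with the arithmetic conditions $3m=4k$, $2m=3k$, $3m=5k$, and the integrality of the shift should pin down the exponent ($\Delta_A^{tk/\gcd(k,3)}$, etc.). Assembling the two images through the extension structure gives the quotient $\sB_\bullet/H$ with $H$ central of the stated form.

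The generic case (\ref{last}) is handled by running the same orbit analysis in reverse: whenever the data do not match one of the three crystallographic configurations (for instance $k'=rk$ with $r\ge 4$, a larger $\Hom$-dimension, or mismatched sphericities), the chain $E^{(0)},E^{(1)},E^{(2)},\dots$ never closes up, so the orbit of $E$ in $\Sph_\sT$ is infinite and tree-like. I would then run a ping-pong argument on $\Sph_\sT$, partitioning the classes according to a length/degree invariant that strictly grows under each reduced syllable $T_E^{\pm}$, $T_{E'}^{\pm}$, to conclude that no nontrivial reduced word acts trivially, whence $\langle T_E,T_{E'}\rangle\cong F_2$. The same growing invariant should also supply the faithfulness needed for the $W$-step above.

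The main obstacle is the lower bound in the three finite-type cases, namely proving that nothing beyond the computed central kernel collapses, and especially the two genuinely exceptional small cases $(m,k)=(2,3)$ and $(m,k)=(1,2)$. There the finite configuration degenerates and an extra non-central relation may appear: for $(2,3)$ one must decide whether $T_E^2$ and $T_{E'}^2$ agree on $\Sph_\sT$ (the relation equalizing the squares of the generators, whose presence collapses $\sB_A$ onto $S_3^\mathbb{Z}$), and for $(1,2)$ whether the commutator of $\sB_B$ dies (collapsing onto $(\mathbb{Z}\times\mathbb{Z})/(2t,-2t)$). Whether these relations hold is not forced by the numerical data alone but depends on finer structure of $\sT$, which is precisely why the statement records a dichotomy here; establishing that both alternatives genuinely occur, and computing the integer $t$, is the most delicate part and is where the $D_4$ and $A_3$ examples from the introduction must be invoked.
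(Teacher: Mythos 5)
Your outline of the surjection step (the chain of twisted sequences closing up after $3$, $4$ or $6$ steps, fed back through Lemma \ref{comm} to get the braid relation) matches what the paper does in Lemma \ref{actbr}, and your ping-pong plan for the generic case is in the same spirit as the paper's Lemma \ref{inequal} and Corollary \ref{free}, which make the ``growing invariant'' precise as $u_F(X)=\sum_l\dim_\kk\Hom_\sT(F,X[l])$. The shift bookkeeping for the center and the role of the $D_4$ and $A_3$ examples are also correctly identified.

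However, there is a genuine gap in your kernel step. A finite-type braid group is \emph{not} a central extension of its Weyl group by $\langle\Delta\rangle$: the kernel of $\sB_{A_2}\rightarrow S_3$ is the pure braid group $P_3\cong F_2\times\mathbb{Z}$, which is non-abelian and vastly larger than the center, and $\sB_{A_2}/Z_A\cong\mathbb{Z}/2*\mathbb{Z}/3$ is infinite rather than isomorphic to $S_3$. Consequently, identifying the image of $W$ (via a faithful permutation action on the finite closed chain, which necessarily factors through the finite group $W$) together with the image of $\langle\Delta\rangle$ says nothing about whether the infinitely many nontrivial elements of the pure braid group act nontrivially, so your strategy cannot determine $\Ker\gamma$. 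The paper closes exactly this gap with Lemma \ref{ffreetobr}: using $\sB_B/Z_B\cong\mathbb{Z}*\mathbb{Z}/2\mathbb{Z}$ and $\sB_G/Z_G\cong\mathbb{Z}*\mathbb{Z}/3\mathbb{Z}$ (plus a normal-form argument for $A_2$), injectivity modulo the center is reduced to showing that specific conjugates of the generators, namely the twists along $T_{E'}^{2}E$, $T_{E'}E$, $T_{E'}T_ET_{E'}E$, etc., generate a free group; that free-group condition is then verified by the same hom-counting ping-pong machinery (Corollaries \ref{free} and \ref{free3}) after computing $a_{E,T_{E'}^{2}E}=2$ and its analogues. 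Without an ingredient of this kind your argument establishes only the surjection and the behaviour of the center, not the stated isomorphisms. Relatedly, the dichotomy in the exceptional cases is governed by whether $T_{E'}^{2}E\sim E$ (resp.\ $T_EE'\sim E'$), a condition your proposal does not isolate; it is precisely the failure of the free-group criterion above.
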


Assume that $E$ and $E'$ are twisting objects such that $\sS^k(E)\cong E[m]$ and $\sS^{k'}(E')\cong E'[m']$. If $\Hom_\sT(E,E'[l])\not=0$, then
\begin{multline*}
\Hom_\sT(E,E'[l+km'-k'm])=\Hom_\sT(E[k'm],E'[l+km'])\\
=\dim_\kk(\sS^{kk'} E,\sS^{kk'} E'[l])=\dim_\kk\Hom_\sT(E,E'[l])\not=0.
\end{multline*}
Thus, if $k'm\not=km'$, then $\Hom_\sT(E,E'[l])=0$ for any $l\in\mathbb{Z}$.

\section{Free group actions}

In this section we will prove the last assertion of Theorem \ref{main}. During this section we assume that $E$ and $E'$ are two spherical sequences such that $E\not\sim E'$.
Let us start with an easy observation.

\begin{lemma}\label{dims}
The values of $a_i=\sum\limits_{l\in\mathbb{Z}}\dim_\kk\Hom_\sT(E_i,E'[l])$ and $b_{i'}=\sum\limits_{l\in\mathbb{Z}}\dim_\kk\Hom_\sT(E,E'_{i'}[l])$ do not depend on $i\in\mathbb{Z}/k\mathbb{Z}$ and $i'\in\mathbb{Z}/k'\mathbb{Z}$.
Moreover, $ka_i=k'b_{i'}$.
\end{lemma}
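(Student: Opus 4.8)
The plan is to collapse both quantities into a single two-index invariant. Set
$c_{i,i'} := \sum_{l\in\mathbb{Z}}\dim_\kk\Hom_\sT(E_i,E'_{i'}[l])$, so that by definition $a_i=\sum_{i'\in\mathbb{Z}/k'\mathbb{Z}}c_{i,i'}$ and $b_{i'}=\sum_{i\in\mathbb{Z}/k\mathbb{Z}}c_{i,i'}$. The structural input I would exploit is the Serre duality available on the members of both sequences: each $E_i$ and each $E'_{i'}$ admits a Serre functor, and from $\sO=\sS[-1]$ together with the defining relations $\sO E_i\cong E_{i+1}[m_i-1]$ and $\sO E'_{i'}\cong E'_{i'+1}[m'_{i'}-1]$ one reads off $\sS E_i\cong E_{i+1}[m_i]$ and $\sS E'_{i'}\cong E'_{i'+1}[m'_{i'}]$.

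First I would establish a \emph{diagonal invariance} $c_{i,i'}=c_{i+1,i'+1}$. Applying the perfect pairing from the definition of ``admits Serre functor'' once gives $\dim_\kk\Hom_\sT(E_i,E'_{i'}[l])=\dim_\kk\Hom_\sT(E'_{i'}[l],\sS E_i)=\dim_\kk\Hom_\sT(E'_{i'},E_{i+1}[m_i-l])$, and applying it a second time (now with $E'_{i'}$ in the first slot) yields $\dim_\kk\Hom_\sT(E_{i+1},E'_{i'+1}[l+m'_{i'}-m_i])$. Summing over all $l\in\mathbb{Z}$, the fixed shift $m'_{i'}-m_i$ is absorbed by reindexing the sum, so indeed $c_{i,i'}=c_{i+1,i'+1}$.

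Second, I would deduce independence of $a_i$ and $b_{i'}$ from this invariance, using crucially that the indices run over the full cyclic groups. Rewriting $c_{i+1,i'}=c_{i,i'-1}$ gives $a_{i+1}=\sum_{i'}c_{i+1,i'}=\sum_{i'}c_{i,i'-1}=a_i$, since $i'\mapsto i'-1$ permutes $\mathbb{Z}/k'\mathbb{Z}$; symmetrically $b_{i'+1}=\sum_i c_{i,i'+1}=\sum_i c_{i-1,i'}=b_{i'}$ using that $i\mapsto i-1$ permutes $\mathbb{Z}/k\mathbb{Z}$. Writing $a$ and $b$ for these common values, I would finish by double counting: $ka=\sum_i a_i=\sum_{i,i'}c_{i,i'}=\sum_{i'}b_{i'}=k'b$, which is exactly $ka_i=k'b_{i'}$.

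The only delicate point — and the place to be careful rather than a genuine obstacle — is the legitimacy of applying the Serre pairing: one must check that both $E_i$ and $E'_{i'}$ admit a Serre functor and that the pairing is compatible with the shift $[l]$, both of which are guaranteed by the spherical-sequence hypotheses (each member has $\sS$ defined on it and $\sS$ commutes with shifts). Everything else is bookkeeping with finite sums over $\mathbb{Z}/k\mathbb{Z}$ and $\mathbb{Z}/k'\mathbb{Z}$.
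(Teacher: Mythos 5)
Your proof is correct and follows essentially the same route as the paper: the paper likewise establishes the diagonal invariance $\dim_\kk\Hom_\sT(E_i,E'_{i'}[l])=\dim_\kk\Hom_\sT(E_{i+1},E'_{i'+1}[l+t_{i,i'}])$ by applying the Serre pairing to both arguments (leaving the shift $t_{i,i'}$ unnamed where you compute it as $m'_{i'}-m_i$), and then sums over $l$, $i$ and $i'$ exactly as you do. The only difference is presentational: you make the two applications of the perfect pairing and the reindexing of the cyclic sums explicit.
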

\begin{proof} Note that.
\begin{multline*}\dim_\kk\Hom_\sT(E_i,E_{i'}'[l])=\dim_\kk\Hom_\sT(\sS E_i,\sS E_{i'}'[l])\\
=\dim_\kk\Hom_\sT(E_{i+1},E_{i'+1}'[l+t_{i,i'}])
\end{multline*}
for some integers $t_{i,i'}$ ($i\in\mathbb{Z}/k\mathbb{Z}$,d $i'\in\mathbb{Z}/k'\mathbb{Z}$).
Taking the sum over all $l$ and $i'$ we get $a_{i+1}=a_i$ and taking the sum over all $l$ and $i$ we get $b_{i'+1}=b_{i'}$. Now, taking the sum over all $l$, $i$ and $i'$, we get $ka_i=k'b_{i'}$.
\end{proof}

We will denote the numbers $a_i$ from the lemma by $a_{E,E'}$. Then Lemma \ref{dims} guarantees that $a_{E,E'}$ is well defined. Note also that it follows from the equality $\dim_\kk\Hom_\sT(E'_{i'},E_{i+1}[m_i-l])=\dim_\kk\Hom_\sT(E_i,E'_{i'}[l])$ that the number $b_{i'}$ from the lemma is equal to $a_{E',E}$.

Let us now give some facts and constructions related to spherical sequences and twists along them.
Suppose that $E\in\sT$ is an object admitting Serre functor such that $\End_\sT(E)$ is local with the maximal ideal $\sI$. Suppose also that $f:E\rightarrow \sS E$ is such that $f\sI=0$. If $g:X\rightarrow E$ is such that $fg\not=0$, then $g$ is a split epimorphism, and hence $E$ is a direct summand of $X$.
Indeed, if $fg\not=0$, then there is some $g':E\rightarrow X$ such that $fgg'\not=0$, and hence $gg'\in\End_\sT(E)$ is not annihilated by $f$. Then $gg'\not\in\sI$, and hence this element is invertible.
Analogously, if $h:\sS\rightarrow X$ is such that $hf\not=0$, then $\sS E$ is a direct summand of $X$.
In particular, if $E$ and $E'$ are a spherical sequences such that $E'\not\sim E$, then $ff_i^{E}=f_i^{E}[m_i]g=0$ for any integer $l$ and any $f:E\rightarrow E'[l]$ and $g:E'[l]\rightarrow E$.
For a spherical sequence $E$, let us introduce $f^{E}=\oplus_{i\in\mathbb{Z}/k\mathbb{Z}}f^{E}_i:\sS^{-1} E\rightarrow E$. Note that $\sS^tE\sim E$ is a spherical sequence for any integer $t$, and hence $f^{\sS^t E}$ makes sense.

Let us now describe how one can construct $T_E^s(E')$ in the case where $E'\not\sim E$.
First we have a triangle
$\bigoplus\limits_{i\in\mathbb{Z}/k\mathbb{Z}}\bigoplus\limits_{j=1}^{a_{E,E'}}E_i[r_{i,j}]\rightarrow E'\rightarrow T_{E}E'\rightarrow \bigoplus\limits_{i\in\mathbb{Z}/k\mathbb{Z}}\bigoplus\limits_{j=1}^{a_{E,E'}}E_i[r_{i,j}+1]$
for some $r_{i,j}\in\mathbb{Z}$ ($i\in\mathbb{Z}/k\mathbb{Z}$, $1\le j\le a_{E,E'}$). Note that this triangle can be rewritten as 
\begin{equation}\label{tw1}
\bigoplus\limits_{t=1}^{a_{E,E'}}{E^t}\xrightarrow{\epsilon_{E,E'}^0} E'\xrightarrow{\rho_{E,E'}^0} T_{E}E'\xrightarrow{\theta_{E,E'}^0}\bigoplus\limits_{t=1}^{a_{E,E'}}{E^t}[1],
\end{equation}
where ${E^t}\sim E$ for each $1\le t\le a_{E,E'}$. This triangle satisfies the following properties. For any $i\in\mathbb{Z}/k\mathbb{Z}$, $l\in\mathbb{Z}$, $f:E_i[l]\rightarrow E'$ and $g:T_EE'\rightarrow E_i[l]$ there are $\bar f$ and $\bar g$ such that $f=\epsilon_{E,E'}^0\bar f$ and $g=\bar g \theta_{E,E'}^0$.
Applying the functor $T_{E}^s$ to this triangle, we get the triangle
\begin{equation}\label{twgen}
\bigoplus\limits_{t=1}^{a_{E,E'}}\sO^{-s}{E^{t}}\xrightarrow{\epsilon_{E,E'}^{s}} T_{E}^{s}E'\xrightarrow{\rho_{E,E'}^s} T_{E}^{s+1}E'\xrightarrow{\theta_{E,E'}^{s}}\bigoplus\limits_{t=1}^{a_{E,E'}}\sO^{-s}{E^{t}}[1]
\end{equation}
satisfying the analogous property. Moreover, we may assume that $(\theta_{E,E'}^{s-1}\epsilon_{E,E'}^{s})[-1]=\bigoplus\limits_{t=1}^{a_{E,E'}}f^{\sO^{1-s}E^t}$ for each $s\in\mathbb{Z}$. This equality can be justified for $s=0$ and then proceeded via the functor $T_{E}^s$.

Later we will need the following standard lemma.

\begin{lemma}\label{imimdual} For any $X\in\sT$ and any integer $s$ one has
$$\dim_\kk\Im\Hom_\sT(\theta_{E,E'}^{s},X)=\dim_\kk\Im\Hom_\sT(X,\theta_{\sS E,\sS E'}^{s}).$$	
\end{lemma}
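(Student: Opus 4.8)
The plan is to read off both sides from one and the same linear map via Serre duality, so that the equality reduces to the elementary fact that a linear map and its $\kk$-linear dual have the same rank. Write $\theta=\theta^s_{E,E'}\colon A\to B$ with $A=T_E^{s+1}E'$ and $B=\bigoplus_{t}\sO^{-s}E^t[1]$. First I would note that every object in the triangle (\ref{twgen}) admits Serre functor: the term $B$ is a shift of a direct sum of objects $\sim E$, hence twisting, while $A$ is the image of the twisting object $E'$ under the autoequivalence $T_E^{s+1}$, hence twisting as well. The perfect pairing in the definition of an object admitting Serre functor then gives, for every $X$, natural isomorphisms $\Hom_\sT(A,X)\cong D\Hom_\sT(X,\sS A)$ and $\Hom_\sT(B,X)\cong D\Hom_\sT(X,\sS B)$. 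Naturality in the first variable with respect to $\theta$ identifies the precomposition map $\Hom_\sT(\theta,X)\colon\Hom_\sT(B,X)\to\Hom_\sT(A,X)$ with the $\kk$-dual of the postcomposition map $\Hom_\sT(X,\sS\theta)\colon\Hom_\sT(X,\sS A)\to\Hom_\sT(X,\sS B)$. Since dualizing preserves rank, this already yields $\dim_\kk\Im\Hom_\sT(\theta^s_{E,E'},X)=\dim_\kk\Im\Hom_\sT(X,\sS\theta^s_{E,E'})$.

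It then remains to identify $\sS\theta^s_{E,E'}$ with $\theta^s_{\sS E,\sS E'}$ up to harmless isomorphisms. For this I would apply $\sS$ to the whole triangle (\ref{twgen}). Using $\sS\sO^{-s}E^t\cong\sO^{-s}\sS E^t=\sO^{-s}(\sS E)^t$ together with $\sS T_E^{\,j}\cong T_{\sS E}^{\,j}\sS$ (Lemma \ref{comm} applied with $\Phi=\sS$), the image triangle is isomorphic to the triangle (\ref{twgen}) built from the pair $(\sS E,\sS E')$, whose connecting morphism is by construction $\theta^s_{\sS E,\sS E'}$. Consequently $\sS\theta^s_{E,E'}$ and $\theta^s_{\sS E,\sS E'}$ differ only by precomposition and postcomposition with the isomorphisms identifying the outer terms, and such isomorphisms do not change the image dimension of the induced map $\Hom_\sT(X,-)$. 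Combining this with the first paragraph gives the asserted equality.

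The step I expect to be the real obstacle is the functoriality invoked in the first paragraph, namely that $\Hom_\sT(\theta,X)$ is genuinely the transpose of $\Hom_\sT(X,\sS\theta)$. Because the Serre functor here is defined object by object in a weak sense, I would have to verify that $\sS$ acts on the morphism $\theta$ precisely so as to make the pairing natural, i.e. that $\eta_A(\chi\circ h\circ\theta)=\eta_B(\sS\theta\circ\chi\circ h)$ for all $h\colon B\to X$ and $\chi\colon X\to\sS A$; this compatibility is exactly what pins down $\sS\theta$, and checking it against the uniqueness clause in the definition of $\sS$ is where the only genuine work lies. A secondary point to be careful about is that $\sS$ really sends the triangle (\ref{twgen}) to a triangle, i.e. that it behaves as a triangulated autoequivalence on the objects at hand; everything else is routine bookkeeping with shifts and the relation $\sS T_E\cong T_{\sS E}\sS$.
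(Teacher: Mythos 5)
Your overall strategy --- read both maps off a single bilinear pairing and use that a linear map and its transpose have equal rank --- is sound, and it is the same duality that underlies the paper's argument. But you have misplaced the difficulty, and the step you defer is a genuine gap. The ``naturality'' you single out in your last paragraph is not an obstacle: once you \emph{define} $\sS\theta$ as the unique morphism $\sS A\to\sS B$ with $\eta_B(\sS\theta\circ w)=\eta_A(w\circ\theta)$ for all $w\colon B\to\sS A$ (it exists and is unique by perfectness of the pairing), the adjointness identity and hence the rank equality $\dim_\kk\Im\Hom_\sT(\theta,X)=\dim_\kk\Im\Hom_\sT(X,\sS\theta)$ are immediate. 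The real problem is your second paragraph: you must show that this abstractly defined $\sS\theta^s_{E,E'}$ agrees, up to isomorphisms of source and target, with $\theta^s_{\sS E,\sS E'}$, which is constructed completely independently as the connecting morphism of the twist triangle for the pair $(\sS E,\sS E')$. Your proposed route --- apply $\sS$ to the whole triangle \eqref{twgen} --- is not available here: $\sS$ is defined only objectwise, in the weak sense of the paper, on objects admitting Serre functor; it is not known to be a (triangulated) endofunctor of $\sT$, so there is no statement that it carries triangles to triangles, and Lemma \ref{comm} cannot be invoked with $\Phi=\sS$ because that lemma requires $\Phi$ to be an autoequivalence. Closing this gap (say, by characterizing $\epsilon^0_{E,E'}$ and $\epsilon^0_{\sS E,\sS E'}$ via universal factorization properties and matching their Serre adjoints) is work of the same order as the lemma itself, not bookkeeping.

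The paper's proof is organized precisely to avoid pushing any morphism or triangle through $\sS$. It reduces to $s=0$, treats direct summands of $T_EE'$ inside $X$ separately, and then proves by two inclusions that $\Im\Hom_\sT(\theta^0_{E,E'},X)$ equals the set of $f\colon T_EE'\to X$ with $hf=0$ for every $h\in\Ker\Hom_\sT(X,\theta^0_{\sS E,\sS E'})$; the dimension count then comes from the perfect pairing between $\Hom_\sT(T_EE',X)$ and $\Hom_\sT(X,T_{\sS E}\sS E')$. Each inclusion is a short diagram chase using only the stated factorization properties of $\epsilon$, $\rho$, $\theta$ for the two triangles and the perfect pairings on the spherical summands. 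To salvage your cleaner ``transpose'' formulation you would need to actually supply the identification of $\sS\theta^s_{E,E'}$ with $\theta^s_{\sS E,\sS E'}$; otherwise you should fall back on an argument of the paper's type.
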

\begin{proof} Let us denote $\bigoplus\limits_{t=1}^{a_{E,E'}}\sO^{-s}{E^{t}}[1]$ by $H$. Since the triangle \eqref{twgen} can be obtained from \eqref{tw1} by applying an autoequivalence, it is enough to prove the required equality for $s=0$.
It is not difficult to check the required equality for direct summands of $T_{E}E'$, and so we will assume that $X$ does not have such direct summands.
Then it is enough to show that
$$
\Im\Hom_\sT(\theta_{E,E'}^{0},X)=\{f\in\Hom_\sT(T_{E}E',X)\mid \Ker\Hom_\sT(X,\theta_{\sS E,\sS E'}^{0})f=0\}.
$$
We will denote the set on the right hand side by $\Ker\Hom_\sT(X,\theta_{\sS E,\sS E'}^{s})^{\perp}$.

Let us consider a morphism of the form $g\theta_{E,E'}^{0}:T_EE'\rightarrow X$. Suppose that there is some $h:X\rightarrow T_{\sS E}\sS E'$ such that $hg\theta_{E,E'}^{0}\not=0$ and $\theta_{\sS E,\sS E'}^{0}h=0$.
Then we have some morphism $u:T_{\sS E}\sS E'\rightarrow \sS H$ such that $uhg\not=0$, and hence also a morphism $\bar u:\sS H\rightarrow \sS H$ such that $u=\bar u\theta_{\sS E,\sS E'}^{0}$. We get a contradiction, because $0\not=uhg=\bar u\theta_{\sS E,\sS E'}^{0}hg=0$. Thus, we have $\Im\Hom_\sT(\theta_{E,E'}^{0},X)\subset \Ker\Hom_\sT(X,\theta_{\sS E,\sS E'}^{s})^{\perp}$.

Let us now pick some $f\in \Ker\Hom_\sT(X,\theta_{\sS E,\sS E'}^{s})^{\perp}$.
If $f\not\in\Im\Hom_\sT(\theta_{E,E'}^{0},X)$, then $f\rho_{E,E'}^{0}\not=0$. Then there is a morphism $g:X\rightarrow \sS E'$ such that $gf\rho_{E,E'}^{0}\not=0$. By our assumptions we have $\rho_{\sS E,\sS E'}^{0}gf=0$, and hence there is $h:T_EE'\rightarrow \sS H[-1]$ such that $gf=\epsilon_{\sS E,\sS E'}^{0}h$. Then there is $\bar h:H\rightarrow \sS H[-1]$ such that $h=\bar h \theta_{E,E'}^{0}$. We get a contradiction, because $0\not=gf\rho_{E,E'}^{0}=\epsilon_{\sS E,\sS E'}^{0}h\rho_{E,E'}^{0}=\epsilon_{\sS E,\sS E'}^{0}\bar h \theta_{E,E'}^{0}\rho_{E,E'}^{0}=0$. Thus, we have $\Ker\Hom_\sT(X,\theta_{\sS E,\sS E'}^{s})^{\perp}\subset \Im\Hom_\sT(\theta_{E,E'}^{0},X)$.
\end{proof}

For twisting objects $F,G\in\sT$, let us set $$u_F(G)=\sum\limits_{l\in\mathbb{Z}}\dim_\kk\Hom_\sT(F,G[l]).$$
If $E\sim F$ are two spherical sequences, then $T_E(G)=T_{F}(G)$ and $T_G(E)=T_G(F)$ for any $G$. In particular, the notation $u_{F}(G)$ makes sense if $F$ or $G$ is considered as an element of $\Sph_{\sT}$.
It is clear also that $T_F(G)=T_G(\sS F)$ whenever $F$ admits Serre functor and that for any equivalence $\Phi$ one has $T_{\Phi F}(\Phi G)=T_F(G)$. Since $E\sim \sS E$ for any spherical sequence $E$, we have  that $u_G(E)=u_G(\sS E)=u_E(G)$. Applying \eqref{twistonobj}, we get also
$u_E(T_EG)=u_{T_E^{-1}E}(G)=u_E(G).$ The crucial technical ingredient of our proof is the following lemma.

\begin{lemma}\label{inequal}
Suppose that $a_{E,E'}a_{E',E}\ge 4$ and $X\in \Sph_\sT$ is such that $u_{E'}(X)\le \frac{a_{E,E'}a_{E',E}-2}{a_{E',E}}u_E(X)$. Then $u_{E'}(T_E^sX)\ge a_{E,E'}u_E(X)-u_{E'}(X)$ for any nonzero integer $s$. Moreover, the last mentioned inequality cannot be an equality if $X\sim E'$.
\end{lemma}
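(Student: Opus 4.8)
The plan is to reduce the whole statement to controlling the single sequence of integers $P_s:=u_{E'}(T_E^sX)$ as $s$ increases by one. First I would dispose of the degenerate case $X\sim E$ (then $T_E^sX\sim E$ for every $s$ and the asserted inequality becomes the identity $u_{E'}(E)=a_{E,E'}u_E(X)-u_{E'}(X)$), so assume $X\not\sim E$. The construction producing \eqref{twgen}, applied with $X$ in place of $E'$, then gives for each $s$ a triangle whose outer term is $\bigoplus_{t=1}^{a_{E,X}}\sO^{-s}X^t$ with every summand $\sim E$. Applying $\bigoplus_{l\in\mathbb Z}\Hom_\sT(E',-[l])$ to this triangle yields a long exact sequence in which each outer term has total dimension $c:=a_{E,X}\,u_{E'}(E)=a_{E,E'}u_E(X)$; here I use $\sO^{-s}X^t\sim E$, the symmetry $u_F(G)=u_G(F)$, and the relation $k\,a_{E,E'}=k'a_{E',E}$ of Lemma \ref{dims} (together with $u_E(X)=k\,a_{E,X}$). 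Reading off ranks from this long exact sequence gives the basic recursion $P_{s+1}=P_s+c-2\kappa_s$, where $\kappa_s$ is the total dimension of the image of the map induced by $\epsilon_{E,X}^s$, and one has the elementary bounds $0\le\kappa_s\le\min(c,P_s)$.

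With this recursion the two base cases cost nothing. Since the image of $\epsilon^0$ lands in $\bigoplus_l\Hom_\sT(E',X[l])$ we get $\kappa_0\le P_0$, whence $P_1=P_0+c-2\kappa_0\ge c-P_0$. Dually, the image of the map induced by $\theta^{-1}$ is a quotient of that same space, so $\dim\Im\Hom_\sT(E',\theta^{-1})\le P_0$, hence $\kappa_{-1}\ge c-P_0$ and $P_{-1}\ge c-P_0$. The real work is to propagate these bounds to all $s$ with $|s|\ge 2$, i.e. to show that $u_{E'}$ does not decrease under further application of $T_E$ (resp. $T_E^{-1}$).

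For the propagation I would use two structural inputs. The first is the cross-triangle identity $(\theta^{s-1}\epsilon^s)[-1]=\bigoplus_t f^{\sO^{1-s}X^t}$ recorded after \eqref{twgen}: since $E'\not\sim E$, each sphericity map satisfies $f^E\circ g=0$ for every morphism $g$ from a shift of $E'$ into an object $\sim E$, so $\bigoplus_l\Hom_\sT(E',-[l])$ annihilates the composite $\theta^{s-1}\epsilon^s$. Combined with the long exact sequences this gives $\kappa_s+\kappa_{s-1}\le P_{s-1}$, equivalently $P_{s+1}+P_{s-1}\ge 2c$. The second input is Lemma \ref{imimdual}, which identifies the rank of the covariant connecting map with that of its contravariant partner and lets me pass freely between the $T_E$ and $T_E^{-1}$ directions and between $E$ and $\sS E$. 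The \textbf{main obstacle}, and the genuine content of the lemma, is to upgrade these facts into the sharp estimate $\kappa_s\le c/2$ for $s\ge 1$ (and symmetrically $\kappa_s\ge c/2$ for $s\le -1$), proved by a coupled induction on $|s|$ in which Lemma \ref{imimdual} supplies the self-duality that prevents the rank from exceeding half of $c$. Granting it, the recursion gives $P_{s+1}\ge P_s$ for $s\ge 1$ and $P_{s-1}\ge P_s$ for $s\le -1$, so the base cases propagate and $P_s\ge c-P_0=a_{E,E'}u_E(X)-u_{E'}(X)$ for every nonzero $s$.

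The numerical hypotheses enter precisely at this estimate and in making the conclusion nonvacuous. The assumption $u_{E'}(X)\le\frac{a_{E,E'}a_{E',E}-2}{a_{E',E}}u_E(X)$ is exactly $c-P_0\ge\frac{2}{a_{E',E}}u_E(X)>0$, and in the boundary case $a_{E,E'}a_{E',E}=4$ it reads exactly $P_0\le c/2$, which is what seeds the induction; the condition $a_{E,E'}a_{E',E}\ge 4$ is what leaves enough slack for the rank estimate not to be destroyed at the first step. Finally, for the \emph{moreover} clause, when $X\sim E'$ the identity of $E'$ cannot factor through the fiber $\bigoplus_t\sO^{0}X^t$, a sum of objects $\sim E$, because $E'\not\sim E$ would otherwise force $E'$ to be a summand of an object $\sim E$; hence $\Id_{E'}\notin\Im\epsilon^0$, so $\kappa_0\le P_0-1$ and $P_1\ge c-P_0+2>c-P_0$. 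The same recursion then carries the strict inequality along to all nonzero $s$.
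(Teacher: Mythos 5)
Your framework is sound up to the point you yourself flag as ``the main obstacle,'' but that obstacle is exactly the content of the lemma and it is not proved. The rank recursion $P_{s+1}=P_s+c-2\kappa_s$ with $c=a_{E,E'}u_E(X)$ is correct, the base cases $P_{\pm1}\ge c-P_0$ are fine, and the inequality $P_{s+1}+P_{s-1}\ge 2c$ does follow from the factorization of $\epsilon^s$ through $\rho^{s-1}$ (via $\theta^{s-1}\epsilon^s=\bigoplus f^{\sO^{1-s}E^t}$ being killed by morphisms from $E'$). But that inequality alone does not propagate the bound: it gives $P_4\ge 2c-P_2$, and the only a priori upper bound on $P_2$ is $P_0+2c$, which yields the useless $P_4\ge -P_0$. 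Everything therefore hinges on $\kappa_s\le c/2$ for $s\ge 1$, i.e.\ on the monotonicity $P_{s+1}\ge P_s$, for which you offer only ``a coupled induction in which Lemma \ref{imimdual} supplies the self-duality.'' Lemma \ref{imimdual} equates a covariant with a contravariant rank; it does not by itself cap either one at $c/2$. Note also that your hypothesis $u_{E'}(X)\le\frac{a_{E,E'}a_{E',E}-2}{a_{E',E}}u_E(X)$ only forces $P_0\le c/2$ in the boundary case $a_{E,E'}a_{E',E}=4$, so even the seed of the proposed induction is not available in general. This is a genuine gap.

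For comparison, the paper avoids claiming monotonicity of $P_s$ itself. It works with the triangles for $T_E^sE'$ (not $T_E^sX$), applies $\Hom_\sT(X,-)$ and $\Hom_\sT(-,X)$, and tracks the total ranks $A_s$, $B_s$ of the maps induced by $\theta^{s-1}_{E,E'}$ and $\epsilon^{-s}_{E,E'}$; these are only lower bounds for $u_{E'}(T_E^{\mp s}X)$, and it is these lower bounds that are shown to be non-decreasing. The inductive step is a Serre-duality pairing argument: for each $f$ in the image of $\Hom_\sT(X,\theta^{s-1}[l])$ one chooses $\phi(f)$ with $\eta\bigl(f\phi(f)\bigr)\ne 0$ and shows $\phi(f)\theta^s[l-1]\ne 0$, using crucially that $X\not\sim T_E^{\pm s}E'$ --- a fact that is itself extracted from the induction hypothesis via the numerical assumption $a_{E,E'}a_{E',E}\ge 4$. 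Lemma \ref{imimdual} then converts the resulting contravariant rank back into a covariant one at step $s+1$. If you want to salvage your version, you would need to run an analogous pairing argument for the connecting maps of the triangles building $T_E^sX$; as written, the decisive estimate is asserted rather than established.
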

\begin{proof} We will prove the required condition simultaneously for all $X$ satisfying the conditions of the lemma. Applying the functor $\Hom_{\sT}(X,-)$ to \eqref{tw1}, we get pieces of a long exact sequence of the form
\begin{multline*}
\Hom_{\sT}(X,T_{E}E'[l-1])\xrightarrow{\Hom_\sT(X,\theta_{E,E'}^0[l-1])} \bigoplus\limits_{t=1}^{a_{E,E'}}\Hom_{\sT}(X,{E^t}[l])\\
\xrightarrow{\Hom_\sT(X,\epsilon_{E,E'}^0[l])} \Hom_{\sT}(X,E'[l]).
\end{multline*}
Since ${E^t}\not\sim E'$ for any $1\le t\le a_{E,E'}$, the map $\Hom_\sT(E',\epsilon_{E,E'}^0)$ is not surjective. Then we get the inequalities
\begin{multline*}
\dim_\kk\Im\Hom_\sT(X,\theta_{E,E'}^0[l-1])\\
\ge \sum\limits_{t=1}^{a_{E,E'}}\dim_\kk\Hom_{\sT}(X,{E^t}[l])-\dim_\kk\Hom_{\sT}(X,E'[l]),
\end{multline*}
one of which is not an equality if $X\sim E'$.
Taking the sum over all $l\in\mathbb{Z}$, we get the inequality
$$
\sum\limits_{l\in\mathbb{Z}}\dim_\kk\Im\Hom_\sT(X,\theta_{E,E'}^0[l])\ge a_{E,E'}u_E(X)-u_{E'}(X).
$$
Moreover, this inequality cannot turn into an equality if $X\sim E'$.
Note also that applying the functor $\Hom_{\sT}(-,X)$ to the triangle \eqref{twgen} with $s=-1$ one gets in analogous way the inequality
$$
\sum\limits_{l\in\mathbb{Z}}\dim_\kk\Im\Hom_\sT(\epsilon_{E,E'}^{-1}[l],X)\ge a_{E,E'}u_E(X)-u_{E'}(X).
$$
that  cannot turn into an equality if $X\sim E'$.

For $s>0$ let us introduce
$$
A_s=\sum\limits_{l\in\mathbb{Z}}\dim_\kk\Im\Hom_\sT(X,\theta_{E,E'}^{s-1}[l])\mbox{ and }B_s=\sum\limits_{l\in\mathbb{Z}}\dim_\kk\Im\Hom_\sT(\epsilon_{E,E'}^{-s}[l],X).
$$
We will prove by induction on $s$ that $A_s,B_s\ge a_{E,E'}u_E(X)-u_{E'}(X)$ for $s>0$ and the inequality is strict if $X\sim E'$. Note that $u_{E'}(T_E^{-s}X)\ge A_s$ and $u_{E'}(T_E^{s}X)\ge B_s$, and hence we will get the assertion of the lemma.
Everything is already done for $s=1$. Suppose that we have already proved the required equality for $s$. In particular, we have
$u_{E'}(T_E^{\pm s}E')>a_{E,E'}u_E(E')-u_{E'}(E')=\frac{a_{E,E'}a_{E',E}-2}{a_{E',E}}u_{E}(T_E^{\pm s}E')$, and hence $X\not\sim T_E^{\pm s}E'$.

Let us consider a nonzero morphism $f:X\rightarrow \bigoplus\limits_{t=1}^{a_{E,E'}}\sO^{1-s}E^t[l+1]$ from $\Im\Hom_{\sT}(X,\theta_{E,E'}^{s-1}[l])$. Let us pick some $\phi(f):\bigoplus\limits_{t=1}^{a_{E,E'}}{\sO^{-s}E^{t}}[l]\rightarrow X$ such that $\eta_{\bigoplus\limits_{t=1}^{a_{E,E'}}{\sO^{-s}E^{t}}[l]}\big(f\phi(f)\big)\not=0$.
Then we get the diagram
\begin{center}
\begin{tikzpicture}[node distance=1.4cm]
 \node(A) {$\bigoplus\limits_{t=1}^{a_{E,E'}}{\sO^{-s}E^{t}}[l]$};
\node(A') [below  of=A] {$T_{E}^sE'[l]$};
\node(A'') [above  of=A] {$T_{E}^{s+1}E'[l-1]$};
 \node(Ar) [right  of=A] {};
 \node(Arr) [right  of=Ar] {};
 \node(Arrr) [right  of=Arr] {};
 \node(B) [right  of=Arrr] {$X$};
\node(B') [below  of=B] {$\bigoplus\limits_{t=1}^{a_{E,E'}}{\sO^{1-s}E^{t}}[l+1]$};

\draw [->,>=stealth'] (A')  to node[above]{\tiny$\theta_{E,E'}^{s-1}[l]$ } (B') ; 

\draw [->,>=stealth'] (A)  to node[right]{\tiny$\epsilon_{E,E'}^{s}[l]$ } (A') ; 
\draw [->,>=stealth'] (A)  to node[above=8, right]{\tiny$\phi(f)$ } (B) ; 
\draw [->,>=stealth'] (B)  to node[right]{\tiny$f$ } (B') ; 
\draw [->,>=stealth'] (B)  to node[above]{\tiny$\bar f$ } (A') ; 
\draw [->,>=stealth'] (A'')  to node[right]{\tiny$\theta_{E,E'}^{s}[l-1]$ } (A) ; 
\end{tikzpicture}
\end{center}
with $\bar f$ such that $\theta_{E,E'}^{s-1}[l]\bar f=f$. Suppose that $\phi(f)\theta_{E,E'}^{s}[l-1]=0$. Then we have $\phi(f)=g\epsilon_{E,E'}^{s}[l]$ for some $g:T_{E}^sE'[l]\rightarrow X$, and hence $f\phi(f)=\theta_{E,E'}^{s-1}[l]\bar fg\epsilon_{E,E'}^{s}[l]$. Since $X\not\sim T_{E}^sE'$ and both of them are spherical sequences, the morphism $\bar fg$ is annihilated by any morphism $h:Y\rightarrow T_{E}^sE'[l]$ if $Y$ and $T_{E}^sE'[l]$ do not have isomorphic nonzero direct summands. In particular, we have $fg\epsilon_{E,E'}^{s}[l]=0$, because $T_{E}^sE'\not\sim E$.

Now, if $f_1,\dots,f_p$ is a basis of $\Im\Hom_{\sT}(X,\theta_{E,E'}^{s-1}[l])$, then we can choose $\phi(f_1),\dots,\phi(f_p)$ in such a way that $\eta_{\bigoplus\limits_{t=1}^{a_{E,E'}}{\sO^{-s}E^{t}}[l]}\big(f_i\phi(f_j)\big)=0$ for $1\le i,j\le p$, $i\not=j$ and get $p$ linearly independent elements $\phi(f_1)\theta_{E,E'}^{s}[l-1],\dots,\phi(f_p)\theta_{E,E'}^{s}[l-1]$ of $\Im\Hom_\sT(\theta_{E,E'}^s[l-1],X)$. Then we have
$$	
\dim_\kk\Im\Hom_\sT(\theta_{E,E'}^s[l-1],X)\ge\dim_\kk\Im\Hom_{\sT}(X,\theta_{E,E'}^{s-1}[l]).
$$
Taking the sum over all $l\in\mathbb{Z}$ and using Lemma \ref{imimdual}, we get
$$A_{s+1}\ge A_s\ge a_{E,E'}u_E(X)-u_{E'}(X).$$
Moreover, by induction hypothesis, the inequality is strict if $X\sim E'$.
 The proof of the inequality for $B_s$ is dual.
\end{proof}

To prove the last assertion of Theorem \ref{main}, we will use the ping-pong lemma.
To apply this lemma we need to  introduce two disjoint nonempty sets $\sX,\sX'\subset \Sph_{\sT}$ such that $T_E^s(\sX')\subset \sX$ and $T_{E'}^s(\sX)\subset \sX'$ for any nonzero integer $s$.
In fact, we will prove that the action on the free group on two generators induced by twists along $E$ and $E'$ is faithful not only on $\Sph_{\sT}$, but even on the union of orbits of $E$ and $E'$ in $\Sph_{\sT}$ under the action of the group generated by $T_E$ and $T_{E'}$.
We denote the last mentioned subset of $\Sph_{\sT}$ by $\Sph_{E,E'}$.
By this reason we define
\begin{multline*}\sX=\left\{X\in\Sph_{E,E'}\mid u_{E'}(X)>\frac{a_{E,E'}}{2}u_E(X)\right\}\mbox{ and }\\
\sX'=\left\{X\in\Sph_{E,E'}\mid u_E(X)>\frac{a_{E',E}}{2}u_{E'}(X)\right\},
\end{multline*}
It follows directly from the definitions of $\sX$ and $\sX'$ that $\sX\cap \sX'=\varnothing$ if $a_{E,E'}a_{E',E}\ge 4$.

\begin{corollary}\label{free}
If $a_{E,E'}a_{E',E}\ge 4$, then $T_E$ and $T_{E'}$ generate a free subgroup on two generators in the group of permutations of $\Sph_{E,E'}$.
\end{corollary}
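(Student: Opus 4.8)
The plan is to apply the ping-pong lemma to the two cyclic subgroups $\langle T_E\rangle$ and $\langle T_{E'}\rangle$ acting on $\Sph_{E,E'}$, with the two ``tables'' being exactly the sets $\sX$ and $\sX'$ introduced above. Since both $T_E$ and $T_{E'}$ preserve $\Sph_{E,E'}$ (it is a union of their common orbits) this is a genuine action, and $\sX\cap\sX'=\varnothing$ by the remark preceding the statement. It therefore suffices to establish three things: that $\sX$ and $\sX'$ are nonempty, that $T_E^s(\sX')\subseteq\sX$ and $T_{E'}^s(\sX)\subseteq\sX'$ for every nonzero integer $s$, and that $\langle T_E\rangle$ is infinite, so that the two-subgroup form of the ping-pong lemma (one subgroup of order at least $3$) applies.

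The core of the argument is the pair of inclusions, and here everything is fed by Lemma \ref{inequal}. I would take $X\in\sX'$, so $u_E(X)>\frac{a_{E',E}}{2}u_{E'}(X)$, i.e. $u_{E'}(X)<\frac{2}{a_{E',E}}u_E(X)$. Because $a_{E,E'}a_{E',E}\ge 4$ we have $\frac{2}{a_{E',E}}\le\frac{a_{E,E'}a_{E',E}-2}{a_{E',E}}$, so $X$ satisfies the hypothesis of Lemma \ref{inequal}, which then yields $u_{E'}(T_E^sX)\ge a_{E,E'}u_E(X)-u_{E'}(X)$ for every nonzero $s$. Using again that $\frac{2}{a_{E',E}}\le\frac{a_{E,E'}}{2}$ is equivalent to $a_{E,E'}a_{E',E}\ge 4$, we get $u_{E'}(X)<\frac{a_{E,E'}}{2}u_E(X)$, and hence
$$u_{E'}(T_E^sX)\ge a_{E,E'}u_E(X)-u_{E'}(X)>\frac{a_{E,E'}}{2}u_E(X)=\frac{a_{E,E'}}{2}u_E(T_E^sX),$$
the last equality being $u_E(T_E^sX)=u_E(X)$. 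Thus $T_E^sX\in\sX$, proving $T_E^s(\sX')\subseteq\sX$; interchanging the roles of $E$ and $E'$ (legitimate since $a_{E',E}a_{E,E'}\ge 4$ is symmetric) gives $T_{E'}^s(\sX)\subseteq\sX'$.

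For nonemptiness I would apply Lemma \ref{inequal} directly to $X=E'$: a short computation with $u_{E'}(E')=2k'$ and $u_E(E')=k\,a_{E,E'}=k'a_{E',E}$ shows that $E'$ meets the hypothesis of the lemma precisely when $a_{E,E'}a_{E',E}\ge 4$, and since $E'\sim E'$ the ``moreover'' clause supplies the strict inequality needed to place $T_E^s(E')$ in $\sX$; symmetrically $T_{E'}^s(E)\in\sX'$. Hence $\sX,\sX'\neq\varnothing$. Finally, since $\sX'\neq\varnothing$, $T_E^s(\sX')\subseteq\sX$ and $\sX\cap\sX'=\varnothing$, no nonzero power of $T_E$ can fix a chosen element of $\sX'$, so $\langle T_E\rangle$ is infinite; likewise $\langle T_{E'}\rangle$. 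The ping-pong lemma then identifies the generated subgroup with the free product $\langle T_E\rangle*\langle T_{E'}\rangle\cong\mathbb{Z}*\mathbb{Z}=F_2$.

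The main obstacle is not a single hard step but the bookkeeping ensuring the hypothesis of Lemma \ref{inequal} holds throughout: both the implication $X\in\sX'\Rightarrow(\text{hypothesis holds})$ and the final strict comparison rest on the one arithmetic fact $a_{E,E'}a_{E',E}\ge 4$, and one must track where strictness is genuinely required (it comes for free from the open conditions defining $\sX,\sX'$, and from the $X\sim E'$ clause of the lemma in the nonemptiness step). The only other point needing care is invoking the correct form of the ping-pong lemma, namely the two-subgroup version with $|\langle T_E\rangle|\ge 3$, so that reduced words beginning and ending with different generators are handled by conjugation.
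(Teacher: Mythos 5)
Your proposal is correct and follows essentially the same route as the paper: the same sets $\sX$ and $\sX'$, the same application of Lemma \ref{inequal} (both for the inclusions $T_E^s\sX'\subset\sX$, $T_{E'}^s\sX\subset\sX'$ and, via the strictness clause for $X\sim E'$, for nonemptiness), and the same ping-pong conclusion. The only cosmetic differences are that you obtain the nonemptiness of $\sX'$ by symmetry rather than from $T_{E'}\sX\subset\sX'$, and that you spell out the verification that $\langle T_E\rangle$ is infinite, which the paper leaves implicit.
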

\begin{proof} By Lemma \ref{inequal}, we have $$u_{E'}(T_EE')>a_{E,E'}u_E(E')-u_{E'}(E')=\frac{a_{E,E'}a_{E',E}-2}{a_{E',E}}u_{E}(E')\ge \frac{a_{E,E'}}{2}u_E(T_EE'),$$ and hence $T_EE'\in\sX$, i.e. $\sX$ is nonempty. The non emptiness of $\sX'$ will follow from the condition $T_{E'}\sX\subset \sX'$ that we will prove below.

If $X\in \sX'$, then $u_{E'}(X)<\frac{2}{a_{E',E}}u_E(X)\le \frac{a_{E,E'}a_{E',E}-2}{a_{E',E}}u_E(X)$, and hence by Lemma \ref{inequal} one has
$$u_{E'}(T_E^sX)\ge a_{E,E'}u_E(X)-u_{E'}(X)>\frac{a_{E,E'}a_{E',E}-2}{a_{E',E}}u_{E}(X)\ge \frac{a_{E,E'}}{2}u_{E}(T_E^sX)$$
for any nonzero integer $s$. Thus, $T_E^s\sX'\subset \sX$ for any nonzero integer $s$. Analogously, one has $T_{E'}^s\sX\subset \sX'$. Thus, the required assertion follows from the ping-pong lemma.
\end{proof}

Now we are ready to prove the last item of Theorem \ref{main}.

\begin{proof}[Proof of item \eqref{last} of Theorem \ref{main}] By Lemma \ref{free} it is enough to prove that $a_{E,E'}a_{E',E}\ge 4$ if none of the items \eqref{a2}, \eqref{b2} and \eqref{g2} holds. Since $ka_{E,E'}=k'a_{E',E}=\sum\limits_{l\in\mathbb{Z}}\dim_\kk\Hom_\sT(E,E'[l])$, one has $a_{E,E'},a_{E',E}\ge 2$ if $k\nmid k'$ or $\sum\limits_{l\in\mathbb{Z}}\dim_\kk\Hom_\sT(E,E'[l])>k'$. If $\sum\limits_{l\in\mathbb{Z}}\dim_\kk\Hom_\sT(E,E'[l])=k'=kr$, then $a_{E',E}=1$ and $a_{E,E'}=r$. Thus, one has $a_{E,E'}a_{E',E}\ge 4$ if $r\not\in\{1,2,3\}$. Finally, the equality $m'=mr$ follows from the condition $k'm=km'$.
\end{proof}

Before finishing this section, we will give one more result following from Lemma \ref{inequal} and the ping-pong lemma. We will need this result in the next section.

\begin{corollary}\label{free3}
Let $E$, $E'$ and $E''$ be $m$-spherical sequences of length $k$ such that $a_{E,E'}=a_{E,E''}=a_{E',E''}=3$ and there exists a triangle
$
E\rightarrow E'\rightarrow E''.
$
Let us denote by $\Sph_{E,E',E''}$ the union of orbits of $E$, $E'$ and $E''$ in $\Sph_{\sT}$ under the action of the group generated by $T_E$, $T_{E'}$ and $T_{E''}$.
Then the subgroup of the group of permutations of $\Sph_{E,E',E''}$ generated by $T_E$, $T_{E'}$ and $T_{E''}$ is isomorphic to the free group on three generators.
\end{corollary}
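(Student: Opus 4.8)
The plan is to apply the ping-pong lemma, this time to the free product of three infinite cyclic groups. First I would record the combinatorial input. Since $E$, $E'$ and $E''$ all have length $k$, Lemma \ref{dims} forces $a_{F,G}=a_{G,F}=3$ for every ordered pair $(F,G)$ taken from these three sequences; in particular $a_{F,G}a_{G,F}=9\ge 4$, so Lemma \ref{inequal} is available for each ordered pair with threshold $\frac{9-2}{3}=\frac{7}{3}$. Next I would extract from the triangle $E\to E'\to E''$ the three ``triangle inequalities''
$$u_E(Z)\le u_{E'}(Z)+u_{E''}(Z),\quad u_{E'}(Z)\le u_E(Z)+u_{E''}(Z),\quad u_{E''}(Z)\le u_E(Z)+u_{E'}(Z),$$
valid for every $Z\in\sT$: applying $\Hom_\sT(Z,-)$ to the three rotations of the triangle and reading off the long exact sequence bounds the dimension of each middle term by the sum of the dimensions of the outer two, and summing over all shifts (together with the symmetry $u_F(G)=u_G(F)$ for spherical sequences) gives the claim.

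I would then define the three ping-pong sets inside $\Sph_{E,E',E''}$ as the regions in which a single value is strictly minimal,
$$\sX_E=\{X\mid u_E(X)<u_{E'}(X)\text{ and }u_E(X)<u_{E''}(X)\},$$
and $\sX_{E'}$, $\sX_{E''}$ by cyclic relabeling. These are pairwise disjoint by inspection, and they are nonempty since they contain the generators themselves: a direct count from the definition of a spherical sequence gives $u_E(E)=2k$ while $u_{E'}(E)=u_{E''}(E)=3k$, so $E\in\sX_E$, and symmetrically $E'\in\sX_{E'}$, $E''\in\sX_{E''}$.

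The core step is to prove $T_E^s(\sX_{E'}\cup\sX_{E''})\subseteq\sX_E$ for every nonzero $s$, the remaining inclusions following by the symmetry of the configuration. Here I would use the invariance $u_E(T_E^sX)=u_E(X)=:p$. If $X\in\sX_{E'}$, write $q=u_{E'}(X)<p$; then the hypothesis $q\le\frac{7}{3}p$ of Lemma \ref{inequal} holds, so that lemma gives $u_{E'}(T_E^sX)\ge 3p-q>2p$, and the triangle inequality $u_{E''}\ge u_{E'}-u_E$ applied to $Z=T_E^sX$ yields $u_{E''}(T_E^sX)\ge(3p-q)-p=2p-q>p$. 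As both competing coordinates now exceed $p=u_E(T_E^sX)$, we get $T_E^sX\in\sX_E$; the case $X\in\sX_{E''}$ is identical after interchanging $E'$ and $E''$. Each generator has infinite order (for $s\ne 0$ one has $T_E^sE'\in\sX_E$ while $E'\in\sX_{E'}$, and these sets are disjoint), so the ping-pong lemma identifies the group generated by $T_E$, $T_{E'}$, $T_{E''}$ with $\langle T_E\rangle*\langle T_{E'}\rangle*\langle T_{E''}\rangle\cong F_3$.

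The main obstacle is the coupling of the two growth estimates in the core step: Lemma \ref{inequal} controls directly only the coordinate whose value in $X$ is small enough to meet the hypothesis $\le\frac{7}{3}u_E(X)$, and it says nothing about the third coordinate, which could a priori be arbitrarily large. The triangle $E\to E'\to E''$ is exactly what repairs this, transferring the strong bound $>2p$ from the controlled coordinate to the other one through $u_{E''}\ge u_{E'}-u_E$. It is worth noting that the margin is tight: the estimate $2p-q>p$ succeeds precisely because the regions are cut out by strict minimality, and a symmetric definition demanding any fixed gap larger than equality would already fail.
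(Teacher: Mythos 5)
Your proof is correct and follows essentially the same route as the paper: the same three strict-minimality ping-pong sets, the same triangle inequalities extracted from $E\rightarrow E'\rightarrow E''$, and the same use of Lemma \ref{inequal} with the bound $a_{E,E'}u_E(X)-u_{E'}(X)$. The only (immaterial) variation is that you bound the third coordinate by applying the triangle inequality to $T_E^sX$, whereas the paper invokes Lemma \ref{inequal} a second time for the pair $(E,E'')$ and applies the triangle inequality to $X$ itself; both give the same estimate.
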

\begin{proof} Let us define 
\begin{multline*}
\sX=\{X\in\Sph_{E,E',E''}\mid u_E(X)<\min\big(u_{E'}(X), u_{E''}(X)\big)\},\\
\sX'=\{X\in\Sph_{E,E',E''}\mid u_{E'}(X)<\min\big(u_{E}(X), u_{E''}(X)\big)\},\\
\sX''=\{X\in\Sph_{E,E',E''}\mid u_{E''}(X)<\min\big(u_{E}(X), u_{E'}(X)\big)\}.
\end{multline*}
It is clear that $\sX$, $\sX'$ and $\sX''$ are pairwise disjoint.
Since $u_{E'}(E)=u_{E''}(E)=\frac{3}{2}u_E(E)>u_E(E)$, we have $E\in\sX$, and hence $\sX$ is nonempty. Analogously $\sX'$ and $\sX''$ are nonempty too.

It follows from the existence of the triangle $E\rightarrow E'\rightarrow E''$ that $u_E(X)\le u_{E'}(X)+u_{E''}(X)$, $u_{E'}(X)\le u_{E}(X)+u_{E''}(X)$ and $u_{E''}(X)\le u_{E}(X)+u_{E'}(X)$ for any $X\in\sT$.
It follows from Lemma \ref{inequal} that, for $X\in\sX'$ and a nonzero integer $s$, one has
$$u_{E'}(T_E^sX)\ge a_{E,E'}u_E(X)-u_{E'}(X)> 3u_E(X)-u_E(X)>u_E(X)$$ and $$u_{E''}(T_E^sX)\ge a_{E,E''}u_E(X)-u_{E''}(X)\ge 3u_E(X)-\big(u_E(X)+u_{E'}(X)\big)>u_E(X),$$ i.e. $T_E^sX\in\sX$. Analogously one can show that $T_E^s\sX''\in\sX$, $T_{E'}^s(\sX\cup\sX'')\in\sX'$ and $T_{E''}^s(\sX\cup\sX')\in\sX''$ for any nonzero integer $s$. Thus, the required assertion follows from the ping-pong lemma.
\end{proof}

\section{Actions of generalized braid groups}

In this section we will prove the remaining assertions of Theorem \ref{main}. Thus, we assume during this section that we are in the settings of Theorem \ref{main} and, moreover, $\sum\limits_{l\in\mathbb{Z}}\dim_\kk\Hom_\sT(E,E'[l])=k'=kr$ and $m'=mr$ for some $r\in\{1,2,3\}$. We also set $\Gamma=A_2$ if $r=1$, $\Gamma=B_2$ if $r=2$ and $\Gamma=G_2$ if $r=3$.

Let us first prove that $T_E$ and $T_{E'}$ satisfy the corresponding braid relation.
To do this we adjust degrees of morphisms between $E$ and $E'$. Since we can apply arbitrary shifts to direct summands of $E$ and $E'$ and shift the indices in the enumeration of summands of $E'$, we may assume that 
$E$ and $E'$ are justified in such a way that $\Hom_{\sT}(E_i,E'_{i'}[l])\not =0$ if and only if $k\mid i-i'$ and $l=0$. Then $\Hom_{\sT}(E'_{i'},E_{i}[l])\not =0$ if and only if $k\mid i-1-i'$ and $l=m_{i-1}$, where $m_i$ ($i\in\mathbb{Z}/k\mathbb{Z}$) are numbers from the definition of a spherical sequence. Note that for $i'\in\mathbb{Z}/k'\mathbb{Z}$, one has $m_{i'}'=m_{i'}$, where $m_{i'}'$ are the corresponding numbers for $E'$.
For $i\in\mathbb{Z}$, let $h_{i}$ be some nonzero element of $\Hom_\sT(E_i,E_{i}')$ and $g_{i}$ be some nonzero element of $\Hom_\sT(E_{i-1}'[-m_{i-1}],E_{i})$.

\begin{lemma}\label{actbr} Suppose that $\sum\limits_{l\in\mathbb{Z}}\dim_\kk\Hom_\sT(E,E'[l])=k'=kr$ and $m'=mr$ for some $r\in\{1,2,3\}$.
\begin{enumerate}
\item If $r=1$, then $T_ET_{E'}T_E=T_{E'}T_{E}T_{E'}$.
\item If $r=2$, then $(T_ET_{E'})^2=(T_{E'}T_{E})^2$.
\item If $r=3$, then $(T_ET_{E'})^3=(T_{E'}T_{E})^3$.
\end{enumerate}
\end{lemma}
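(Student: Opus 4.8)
The plan is to reduce each of the three functorial identities to a single equivalence in $\Sph_{\sT}$ by means of the commutation Lemma~\ref{comm}, and then to establish that equivalence by an explicit computation of iterated twists. Recall that Lemma~\ref{comm} gives $\Phi T_F\Phi^{-1}\cong T_{\Phi F}$ for every autoequivalence $\Phi$. For $r=1$ I would rewrite the required relation $T_ET_{E'}T_E=T_{E'}T_ET_{E'}$ in the form $T_ET_{E'}T_E^{-1}=T_{E'}^{-1}T_ET_{E'}$; since the left and right sides equal $T_{T_E(E')}$ and $T_{T_{E'}^{-1}(E)}$ respectively, it suffices to prove $T_E(E')\sim T_{E'}^{-1}(E)$, equivalently $T_{E'}T_E(E')\sim E$. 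For $r=2$ the relation $(T_ET_{E'})^2=(T_{E'}T_E)^2$ is equivalent to the assertion that $T_E$ commutes with $W:=T_{E'}T_ET_{E'}$, because both sides equal $T_EW$ and $WT_E$; as $WT_EW^{-1}\cong T_{W(E)}$, it suffices to prove $T_{E'}T_ET_{E'}(E)\sim E$. In the same way, for $r=3$ the relation $(T_ET_{E'})^3=(T_{E'}T_E)^3$ reduces to $T_E$ commuting with $(T_{E'}T_E)^2T_{E'}$, for which it suffices to prove $T_{E'}T_ET_{E'}T_ET_{E'}(E)\sim E$. Thus in each case the lemma becomes the assertion that a fixed alternating word in $T_E$ and $T_{E'}$ of length one less than the braid length carries $E$ (for $r=2,3$) or $E'$ (for $r=1$) to $E$ up to $\sim$.

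The normalization of degrees fixed just before the statement makes these words computable on components. Since $\Hom_{\sT}(E_i,E'_{i'}[l])\ne 0$ exactly when $k\mid i-i'$ and $l=0$, twisting a component of $E'$ along $E$ is the cone of a single morphism, $T_E(E'_{i'})\cong\mathrm{Cone}(E_{i'}\xrightarrow{h_{i'}}E'_{i'})$; dually, each $E_i$ receives nonzero maps from precisely the $r$ components $E'_{i'}$ with $i'\equiv i-1\pmod k$, so $T_{E'}(E_i)$ is the cone of $\big(\bigoplus_{i'\equiv i-1}E'_{i'}[-m_{i-1}]\big)\to E_i$ built from the $g_i$. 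For $r=1$ both twists are cones over the single morphisms $h_i$ and $g_i$, and a standard octahedral computation in the spirit of \cite{ST} identifies $T_{E'}\big(T_E(E'_{i'})\big)$ with a shift of a single component of $E$; keeping track of the resulting index shift (by one) and of the degree shifts then yields $T_{E'}T_E(E')\sim E$, which settles the case $r=1$.

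For $r=2,3$ I would evaluate the relevant word one twist at a time, each step being a cone of the explicit shape above, and verify by induction that every partial product yields again a spherical sequence whose components are shifts of those of the preceding one; these sequences run through a finite configuration naturally attached to the pair $(E,E')$, the analogue of the set of positive roots of $\Gamma$. The main obstacle is exactly the bookkeeping in these two cases: since now $a_{E,E'}=r>1$, the twist $T_{E'}(E_i)$ is a cone over $r$ morphisms, so one must check that these cones assemble as predicted, i.e.\ that no unexpected morphisms arise between consecutive sequences and that the Serre relation $\sO(\cdot)\cong(\cdot)[\cdot]$ and the Hom-vanishing conditions defining a spherical sequence persist at each step. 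This is where the hypotheses $k'=rk$, $m'=rm$ and $\sum_{l}\dim_{\kk}\Hom_{\sT}(E,E'[l])=rk$ enter: they force the configuration to close up after the prescribed number of alternating twists and to return $E$ up to $\sim$. Once the terminal object is recognized as a shift-and-permutation of $E$, the equivalence $\sim E$ follows, and together with the reductions of the first paragraph this settles the cases $r=2$ and $r=3$.
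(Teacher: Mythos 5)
Your reduction step is correct and is essentially the one the paper uses: by Lemma~\ref{comm} each braid relation follows once a single alternating word of twists is shown to fix a spherical sequence up to $\sim$. (The paper reduces to $(T_ET_{E'}T_E)E'\sim E'$ for $r=2$ and $(T_ET_{E'}T_ET_{E'}T_E)E'\sim E'$ for $r=3$, i.e.\ it tracks $E'$ where you track $E$; both variants are legitimate, and for $r=1$ the paper simply cites \cite{Efimov}.) The problem is that for $r=2,3$ you stop exactly where the proof begins. The entire content of the paper's argument is the explicit chain of cones: e.g.\ for $r=3$ one must identify $T_ET_{E'}T_EE_i'$ with the cone of $E_{i-1}\xrightarrow{\left(\begin{smallmatrix}h_{i+k-1}\\-h_{i+2k-1}\end{smallmatrix}\right)}E_{i+k-1}'\oplus E_{i+2k-1}'$ (note the sign, which comes out of the octahedral axiom and matters for the next step), then check that the subsequent twist by $T_{E'}$ collapses this two-term complex back to a cone of a single $g_{i-1}$, and finally that $T_E$ kills that cone down to a shifted $E_{i-2}'$. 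You acknowledge that ``the main obstacle is exactly the bookkeeping'' and then do not do the bookkeeping; but without identifying the intermediate objects and the actual connecting morphisms there is no argument that ``the configuration closes up'' --- the hypotheses $k'=rk$, $m'=rm$, $\sum_l\dim\Hom(E,E'[l])=rk$ do not force this by any soft counting; it has to be verified cone by cone.

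A secondary inaccuracy: your inductive claim that ``every partial product yields again a spherical sequence whose components are shifts of those of the preceding one'' is false as stated --- the intermediate objects such as $\mathrm{Cone}(E_i\xrightarrow{h_i}E_i')$ are genuinely new spherical sequences, not shifts of components of $E$ or $E'$; only the terminal object of the chain returns to a shift-and-reindexing of the starting sequence. So the induction you propose has no correct induction hypothesis to run on. In summary: same strategy as the paper, correct reduction, but the computational core (which is the proof) is missing.
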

\begin{proof} The first case is known, see \cite{Efimov}. Due to Lemma \ref{comm}, it is enough to prove that $(T_ET_{E'}T_E)E'\sim E'$ if $r=2$ and $(T_ET_{E'}T_ET_{E'}T_E)E'\sim E'$ if $r=3$.

If $r=2$, then, for $0\le i\le 2k-1$, we have
\begin{multline*}
T_EE_{i}'\cong cone\left(E_i\xrightarrow{h_{i}}E_i'\right),\\
T_{E'}\,cone\left(E_i\xrightarrow{h_{i}}E_i'\right)\cong cone\left(E_{i+k-1}'[-m_{i-1}]\xrightarrow{g_{i+k}}E_i\right)[1],\\
T_{E}\,cone\left(E_{i+k-1}'[-m_{i-1}]\xrightarrow{g_{i+k}}E_i\right)[1]\cong E_{i+k-1}'[2-m_{i-1}].
\end{multline*}
If $r=3$, then, for $0\le i\le 3k-1$, we have
\begin{multline*}
T_EE_{i}'\cong cone\left(E_i\xrightarrow{h_{i}}E_i'\right),\\
T_{E'}\,cone\left(E_i\xrightarrow{h_{i}}E_i'\right)\cong cone\left((E_{i+k-1}'\oplus E_{i+2k-1}')[-m_{i-1}]\xrightarrow{\scriptsize\begin{pmatrix}g_{i+k}&g_{i+2k}\end{pmatrix}}E_i\right)[1],\\
T_{E}\,cone\left((E_{i+k-1}'\oplus E_{i+2k-1}')[-m_{i-1}]\xrightarrow{\scriptsize\begin{pmatrix}g_{i+k}&g_{i+2k}\end{pmatrix}}E_i\right)[1]\\
\cong cone\left(E_{i-1}\xrightarrow{\scriptsize\begin{pmatrix}h_{i+k-1}\\-h_{i+2k-1}\end{pmatrix}}E_{i+k-1}'\oplus E_{i+2k-1}'\right)[2-m_{i-1}],\\
\end{multline*}
\begin{multline*}
T_{E'}\,cone\left(E_{i-1}\xrightarrow{\scriptsize\begin{pmatrix}h_{i+k-1}\\-h_{i+2k-1}\end{pmatrix}}E_{i+k-1}'\oplus E_{i+2k-1}'\right)[2-m_{i-1}]\\
\cong cone\left(E_{i-2}'[-m_{i-2}]\xrightarrow{g_{i-1}}E_{i-1}\right)[3-m_{i-1}],\\
T_E\,cone\left(E_{i-2}'[-m_{i-2}]\xrightarrow{g_{i-1}}E_{i-1}\right)[3-m_{i-1}]\cong E_{i-2}'[4-m_{i-1}-m_{i-2}].
\end{multline*}
All of these isomorphisms can be obtained by a direct application of the octahedral axiom. Then the required conditions are proved.
\end{proof}

Thus, we get a homomorphism $\gamma$ from $\sB_\Gamma$ to the group of autoequivalences of $\sT$ defined by the equalities $\gamma(\sigma_1)=T_E$ and $\gamma(\sigma_2)=T_{E'}$ and it remains to find the kernel of $\gamma$.
Applying the octahedral axiom as it was done in the proof of Lemma \ref{actbr}, one can show that
\begin{itemize}
\item $(T_ET_{E'})^3(E_i)=E_{i-3}[4-m_{i-1}-m_{i-2}-m_{i-3}]$ and $(T_ET_{E'})^3(E_i')=E_{i-3}'[4-m_{i-1}-m_{i-2}-m_{i-3}]$ if $r=1$;
\item $(T_ET_{E'})^2(E_i)=E_{i-2}[3-m_{i-1}-m_{i-2}]$ and $(T_ET_{E'})^2(E_i')=E_{i+k-2}'[3-m_{i-1}-m_{i-2}]$ if $r=2$;
\item $(T_ET_{E'})^3(E_i)=E_{i-3}[5-m_{i-1}-m_{i-2}-m_{i-3}]$ and $(T_ET_{E'})^3(E_i')=E_{i-3}'[5-m_{i-1}-m_{i-2}-m_{i-3}]$ if $r=3$;
\end{itemize}
In particular, one has $(\gamma\Delta_\Gamma)(E)\sim E$ and $(\gamma\Delta_\Gamma)(E')\sim E'$. Note that if $\Phi$ is an autoequivalence such that $\Phi(E)\sim E$ and $\Phi(E')\sim E'$, then the permutation of $\Sph_{E,E'}$ induced by $\Phi$ is trivial. Indeed, $\Phi T_E=T_{\Phi E}\Phi=T_E\Phi$, $\Phi T_{E'}=T_{\Phi E'}\Phi=T_{E'}\Phi$, and hence $\Phi X\sim X$ if and only if $\Phi T_EX \sim T_EX$ and $\Phi T_{E'}X\sim T_{E'}X$.
Lemma \ref{actbr} and the just mentioned argument show that $\gamma$ induces an action of the group $\sB_\Gamma/Z_{\Gamma}$ on $\Sph_{E,E'}$, where $Z_\Gamma$ denote the center of the group $\sB_\Gamma$.
Our next goal is to show that this action is faithful with only two exception. To do this we will use the following lemma about groups of the form $\sB_\Gamma/Z_{\Gamma}$.

\begin{lemma}\label{ffreetobr} Suppose that $\Gamma\in\{A,B,G\}$ and $\phi:\sB_\Gamma/Z_{\Gamma}\rightarrow K$ is a homomorphism of groups. The homomorphism $\phi$ is injective if one of the following conditions holds:
\begin{enumerate}
\item $\Gamma=A$ and  the elements $\phi(\sigma_1)$ and $\phi(\sigma_2^2\sigma_1\sigma_2^{-2})$ generate a subgroup isomorphic to $F_2$;
\item $\Gamma=B$ and the elements $\phi(\sigma_1)$ and $\phi(\sigma_2\sigma_1\sigma_2^{-1}\big)$ generate a subgroup isomorphic to $F_2$;
\item $\Gamma=G$ and the elements $\phi(\sigma_1)$, $\phi(\sigma_2\sigma_1\sigma_2^{-1})$ and $\phi(\sigma_2\sigma_1\sigma_2\sigma_1\sigma_2^{-1}\sigma_1^{-1}\sigma_2^{-1})$ generate a subgroup isomorphic to $F_3$.
\end{enumerate}
\end{lemma}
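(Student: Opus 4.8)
The plan is to identify each quotient $\sB_\Gamma/Z_\Gamma$ with an explicit free product of two cyclic groups and then reduce injectivity to the Hopf property of free groups. Performing the Tietze substitution $a=\sigma_1$, $c=\sigma_1\sigma_2$ in the dihedral braid relation, the defining relation becomes the assertion that the central element $c^{m/2}$ (namely $(\sigma_1\sigma_2)^2$ for $\Gamma=B$ and $(\sigma_1\sigma_2)^3$ for $\Gamma=G$, which are exactly $\Delta_B$ and $\Delta_G$) commutes with $a$; killing the center I obtain $\sB_B/Z_B\cong\mathbb{Z}\ast\mathbb{Z}/2$ and $\sB_G/Z_G\cong\mathbb{Z}\ast\mathbb{Z}/3$, in both cases with $\sigma_1$ of infinite order and $\sigma_1\sigma_2$ generating the finite factor. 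For $\Gamma=A$ this is the classical isomorphism $\sB_A/Z_A\cong PSL_2(\mathbb{Z})\cong\mathbb{Z}/2\ast\mathbb{Z}/3$, where $\sigma_1\sigma_2\sigma_1$ has order $2$ and $\sigma_1\sigma_2$ has order $3$. In each case I would record once and for all the reduced words representing $\sigma_1$ and $\sigma_2$.

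Write $G=\sB_\Gamma/Z_\Gamma$ and let $F\le G$ be the subgroup generated by the listed elements, $r$ their number ($r=2$ for $\Gamma\in\{A,B\}$, $r=3$ for $\Gamma=G$). I claim it suffices to prove that $F$ is free of finite index in $G$. Indeed, granting this, $F$ is $r$-generated (so of rank at most $r$) and surjects via $\phi$ onto the free group of rank $r$ of the hypothesis; this forces $F$ to be free of rank exactly $r$ and $\phi|_F$ to be an isomorphism, whence $\ker\phi\cap F=1$. Since $F$ has finite index, its normal core $\mathrm{Core}_G(F)$ is normal of finite index and contained in $F$, so $\ker\phi\cap\mathrm{Core}_G(F)=1$; as both are normal, $\ker\phi$ embeds into the finite group $G/\mathrm{Core}_G(F)$ and is therefore a finite normal subgroup of $G$. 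Finally each of the three groups is a nontrivial free product of cyclic groups different from $\mathbb{Z}/2\ast\mathbb{Z}/2$, hence (by the Kurosh subgroup theorem, finite subgroups being conjugate into a factor) has no nontrivial finite normal subgroup, so $\ker\phi=1$.

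For $\Gamma=B$ and $\Gamma=G$ the freeness and finite index are immediate. Each listed element is a conjugate of $\sigma_1$, so $F$ is contained in the normal closure $N=\langle\langle\sigma_1\rangle\rangle$; since $G/N\cong\mathbb{Z}/2$ (resp. $\mathbb{Z}/3$) is obtained by killing the infinite-order generator $\sigma_1$ and keeping the finite factor, $N$ is normal of finite index, and no conjugate of the finite-factor generator survives in $N$, so $N$ is torsion-free, hence free (again by Kurosh). A Reidemeister--Schreier computation with the Schreier transversal formed by the powers of $\sigma_1\sigma_2$ yields a free basis of $N$ consisting of conjugates of $\sigma_1$, and a short direct check shows that the listed elements generate precisely $N$. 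Thus $F=N$ is free of finite index and the reduction above applies.

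The case $\Gamma=A$ is the crux. Here $\sigma_1$ normally generates $PSL_2(\mathbb{Z})$, so $\langle\langle\sigma_1\rangle\rangle=G$ and $F$ admits no description as a normal closure; this is exactly why the statement selects the two particular conjugates $\sigma_1$ and $\sigma_2^2\sigma_1\sigma_2^{-2}$. The plan is to rewrite both as reduced words in $\mathbb{Z}/2\ast\mathbb{Z}/3$ and then run a ping-pong argument on the Bass--Serre tree of the free product (equivalently, on the Farey graph on which $PSL_2(\mathbb{Z})$ acts) to show directly that these two parabolic elements generate a free group of rank $2$, followed by a Todd--Coxeter coset enumeration establishing that the index equals $6$. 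Verifying freeness together with finiteness of the index for these two specific elements is the main technical obstacle; the remaining cases and the surrounding group theory are routine.
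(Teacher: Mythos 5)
Your reduction framework and the cases $\Gamma=B,G$ are correct, and they follow a genuinely different route from the paper. You prove that the subgroup $F$ generated by the listed conjugates of $\sigma_1$ equals the normal closure of $\sigma_1$, is free of rank $r$ and of index $r$, and then pass through the normal core, Hopficity of $F_r$, and the absence of nontrivial finite normal subgroups in a free product; all of these steps check out (e.g.\ for $\Gamma=G$ one has $\sigma_2\sigma_1\sigma_2^{-1}=x^{-1}(yxy^{-1})x$ and $(\sigma_2\sigma_1\sigma_2)\sigma_1(\sigma_2\sigma_1\sigma_2)^{-1}=x^{-1}(y^2xy^{-2})x$ in $\mathbb{Z}*\mathbb{Z}/3\mathbb{Z}$, so $F=\Ker(\mathbb{Z}*\mathbb{Z}/3\mathbb{Z}\rightarrow\mathbb{Z}/3\mathbb{Z})$ as claimed). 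The paper instead runs a ping-pong argument for the conjugation action of $\langle\phi(\sigma_1),\phi(\sigma_2)\rangle$ on the free subgroup generated by the conjugates $\phi(\sigma_1),\phi(y\sigma_1y^{-1}),\dots$, concluding directly that the image is the full free product $\mathbb{Z}*\mathbb{Z}/r\mathbb{Z}$; your version buys a cleaner separation between the group theory of $G$ and the hypothesis on $\phi$.

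The genuine gap is case (1), which you yourself identify as the crux and then leave unexecuted. The entire content of that item is the assertion that $\langle\sigma_1,\sigma_2^2\sigma_1\sigma_2^{-2}\rangle$ is free of finite index in $\sB_A/Z_A\cong PSL_2(\mathbb{Z})$, and ``run a ping-pong argument on the Bass--Serre tree, then a Todd--Coxeter enumeration'' is a plan, not a proof; note that the heuristic ``two parabolics generate a free group'' fails badly in $PSL_2(\mathbb{Z})$ (the parabolics $\sigma_1$ and $\sigma_2$ generate the whole, non-free, group), so the ping-pong tables really must be constructed for these specific elements. The facts you assert are true (the subgroup is free of rank $2$ and of index $6$), and the paper's way of establishing them is to reduce case (1) to case (2): since $(\sigma_1\sigma_2^2)^2=(\sigma_1\sigma_2)^3=\Delta_A$, the assignment $\sigma_1\mapsto\sigma_1$, $\sigma_2\mapsto\sigma_2^2$ induces a surjection $\sB_B/Z_B\rightarrow\langle\sigma_1,\sigma_2^2\rangle\le\sB_A/Z_A$ to which item (2) applies, so $\langle\phi(\sigma_1),\phi(\sigma_2^2)\rangle\cong\mathbb{Z}*\mathbb{Z}/2\mathbb{Z}$; an induction on word length then writes every element of $\sB_A/Z_A$ as $(\sigma_2\sigma_1)^kw'$ with $0\le k\le 2$ and $w'\in\langle\sigma_1,\sigma_2^2\rangle$, and the kernel is killed by hand. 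Your own $\Gamma=B$ computation, applied inside this copy of $\sB_B/Z_B$, would supply the missing freeness and the index $2\cdot 3=6$ with no new ideas --- but some such argument has to actually be written down before the proposal is complete.
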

\begin{proof} Let us first prove the items $(2)$ and $(3)$. Note that $\sB_B/Z_{B}\cong \mathbb{Z}*\mathbb{Z}/2\mathbb{Z}=\langle x,y\mid y^2=1\rangle$ and $\sB_G/Z_{G}\cong \mathbb{Z}*\mathbb{Z}/3\mathbb{Z}=\langle x,y\mid y^3=1\rangle$, where both isomorphisms send $\sigma_1$ to $x$ and $\sigma_2$ to $x^{-1}y$. Hence, it is enough to prove that if $H$ is a group generated by $x$ and $y$ such that $y^r=1$ and the elements $x,yxy^{-1},\dots,y^{r-1}xy^{1-r}$ generate a free group on $r$ generators, then $H$ is a free product of $\mathbb{Z}$ and $\mathbb{Z}/r\mathbb{Z}$. This can be shown, for example, in the following way. It is clear that the subgroup $F_r=\langle x,yxy^{-1},\dots,y^{r-1}xy^{1-r}\rangle\subset H$ is normal. Let us consider the action of $H$ on it by conjugation. Let us define $X\subset F_r$ as the set of elements whose reduced expressions start and finish with $x^k$ for some nonzero integer $k$ and $Y\subset F_r$ as the set of elements whose reduced expressions do not start and do not finish with $x^k$ for some nonzero integer $k$. It is easy to see that $x^{k}Yx^{-k}\subset X$ and $y^lXy^{-l}\subset Y$ for any nonzero integer $k$ and any $1\le l\le r-1$. Thus, the required assertion follows from the ping-pong lemma.

 To prove the first item, let us note first that the group generated by $\phi(\sigma_1)$ and $\phi(\sigma_2^2)$ is isomorphic to $\sB_B/Z_{B}\cong \mathbb{Z}*\mathbb{Z}/2\mathbb{Z}$ by the just proved assertion. Let us prove by induction on the length of the word in $\sigma_1$ and $\sigma_2$ representing $w\in \sB_A/Z_{A}$ that it can be presented in the form $w=(\sigma_2\sigma_1)^kw'$, where $0\le k\le 2$ and $w'\in \langle \sigma_1,\sigma_2^2\rangle$. Suppose that $w=(\sigma_2\sigma_1)^kw'$ with $w'\in \langle \sigma_1,\sigma_2^2\rangle$. We have to prove that $\sigma_1^{\pm 1}w$ and $\sigma_2^{\pm 1}w$ can be presented in the required form. If $k=0$, then $\sigma_1^{\pm 1}w=\sigma_1^{\pm 1}w'\in \langle \sigma_1,\sigma_2^2\rangle$. If $k=1$, then $\sigma_1w=\sigma_1\sigma_2\sigma_1w'=(\sigma_2\sigma_1)^2(\sigma_1^{-1}w')$ and $\sigma_1^{-1}w=\sigma_1^{-1}\sigma_2\sigma_1w'=(\sigma_2\sigma_1)^2(\sigma_2^2\sigma_1w')$. If $k=2$, then $\sigma_1w=\sigma_1\sigma_2\sigma_1\sigma_2\sigma_1w'=(\sigma_2\sigma_1)(\sigma_2^2\sigma_1w')$ and $\sigma_1^{-1}w=\sigma_1^{-1}\sigma_2\sigma_1\sigma_2\sigma_1w'=\sigma_2\sigma_1(\sigma_1w')$.
The case of the word $\sigma_2^{\pm 1}w$ can be considered in the same manner.
Let us now suppose that $(\sigma_2\sigma_1)^kw\in \Ker\phi$ for some $w\in \langle \sigma_1,\sigma_2^2\rangle$. Since $\langle \sigma_1,\sigma_2^2\rangle$ maps injectively to $K$, it follows from $\phi(w^3)=1$ that $w=1$. Thus, we have $(\sigma_2\sigma_1)^k\in \Ker\phi$, and hence $k=0$, i.e. $(\sigma_2\sigma_1)^kw=1$.
\end{proof}

Now we are ready to prove the results on the faithfulness of the action of $\sB_\Gamma/Z_{\Gamma}$ on $\Sph_{E,E'}$.

\begin{corollary}\label{Af}
If $r=1$ and $T_{E'}^{2}E\not\sim E$, then the action of $\sB_A/Z_A$ on $\Sph_{E,E'}$ induced by $\gamma$ is faithful.
\end{corollary}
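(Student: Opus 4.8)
The plan is to deduce the corollary from the criterion in Lemma \ref{ffreetobr}(1), applied to the homomorphism $\bar\gamma\colon\sB_A/Z_A\to\mathrm{Perm}(\Sph_{E,E'})$ induced by $\gamma$. By Lemma \ref{comm} one has $\bar\gamma(\sigma_1)=T_E$ and $\bar\gamma(\sigma_2^2\sigma_1\sigma_2^{-2})=T_{E'}^2T_ET_{E'}^{-2}=T_{E''}$, where $E'':=T_{E'}^2E$. Thus it suffices to show that $T_E$ and $T_{E''}$ generate a free group of rank $2$ as permutations of $\Sph_{E,E'}$. Here $E''$ is again an $m$-spherical sequence of length $k$ (it is obtained from $E$ by an autoequivalence), and $E''\not\sim E$ by hypothesis, so the pair $(E,E'')$ is an admissible input for the free-group machinery of Section 3. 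The first task is therefore to verify $a_{E,E''}a_{E'',E}\ge 4$, after which Corollary \ref{free} yields that $T_E$ and $T_{E''}$ generate $F_2$ on $\Sph_{E,E''}$.

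Since $E''$ has the same length $k$ as $E$, Lemma \ref{dims} together with the symmetry $u_E(E'')=u_{E''}(E)$ gives $a_{E,E''}=a_{E'',E}$, so it is enough to prove $a_{E,E''}=2$, i.e. $u_E(T_{E'}^2E)=2k$. I would compute $T_{E'}^2E$ by applying the defining triangle of $T_{E'}$ twice, exactly as in the proof of Lemma \ref{actbr}: the first twist gives $E'_{i-1}[-m_{i-1}]\xrightarrow{g_i}E_i\to T_{E'}E_i\to E'_{i-1}[1-m_{i-1}]$ and the second gives $E'_{i-2}[1-m_{i-2}-m_{i-1}]\to T_{E'}E_i\to T_{E'}^2E_i\to E'_{i-2}[2-m_{i-2}-m_{i-1}]$. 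The crucial input is that the composite $g_ih_{i-1}$ is a \emph{nonzero} multiple of $f^E_{i-1}$: this is precisely the statement that the Serre pairing $\Hom_\sT(E'_{i-1},E_i[m_{i-1}])\times\Hom_\sT(E_{i-1},E'_{i-1})\to\Hom_\sT(E_{i-1},\sS E_{i-1})\xrightarrow{\eta}\kk$ is perfect, which holds because $E_{i-1}$ admits a Serre functor. Feeding this into the long exact sequences obtained from $\Hom_\sT(E_j,-)$ shows first that $\Hom_\sT^\bullet(E_j,T_{E'}E_i)=\kk$ if $j=i$ and $0$ otherwise (the identity of $E_i$ survives while the $E'_{i-1}$-contribution cancels against $f^E_{i-1}$), and then that $u_E(T_{E'}^2E_i)=2$, the two contributions coming from $j=i$ (through $T_{E'}E_i$) and from $j=i-2$ (through the $E'_{i-2}$-term). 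Summing over $i$ gives $u_E(T_{E'}^2E)=2k$, hence $a_{E,E''}=2$ and $a_{E,E''}a_{E'',E}=4$.

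It remains to transfer the conclusion from $\Sph_{E,E''}$ to $\Sph_{E,E'}$. Since $T_{E''}=T_{E'}^2T_ET_{E'}^{-2}\in\langle T_E,T_{E'}\rangle$ and $E''=T_{E'}^2E$ lies in the $\langle T_E,T_{E'}\rangle$-orbit of $E$, the set $\Sph_{E,E''}$ is a $\langle T_E,T_{E''}\rangle$-invariant subset of $\Sph_{E,E'}$. Restriction to this subset is a homomorphism from the subgroup $\langle T_E,T_{E''}\rangle\subset\mathrm{Perm}(\Sph_{E,E'})$ onto the rank-$2$ free group furnished by Corollary \ref{free}; as a two-generated group surjecting onto $F_2$ is itself $F_2$, the images of $T_E$ and $T_{E''}$ in $\mathrm{Perm}(\Sph_{E,E'})$ generate $F_2$. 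Lemma \ref{ffreetobr}(1) then gives the injectivity of $\bar\gamma$, which is the assertion.

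The step I expect to be the main obstacle is the lower bound $u_E(T_{E'}^2E)\ge 2k$. For $k\ge 3$ the two contributions to $u_E(T_{E'}^2E_i)$ sit in the distinct components $j=i$ and $j=i-2$ and cannot interfere, so the count is clean; the same holds whenever the relevant $\Hom$-spaces land in different degrees. The delicate situation is the low-length degeneration in which $i-2\equiv i$ and the degrees collide — concretely $k=2$ with $m=1$ — since there the surviving degree-$0$ class of $T_{E'}E_i$ and the $E'$-contribution could a priori cancel, which would drop $a_{E,E''}$ to $0$. Ruling this out requires a direct verification that the connecting map, equivalently a composite of the form $T_{E'}(g)\circ h$, vanishes; this is exactly where the hypothesis $T_{E'}^2E\not\sim E$ must enter, since a nonzero such composite would force $E$ and $T_{E'}^2E$ to be orthogonal rather than genuinely related.
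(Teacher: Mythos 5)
Your overall strategy is the paper's: reduce via Lemma \ref{ffreetobr}(1) to showing that $T_E$ and $T_{T_{E'}^2E}$ generate $F_2$, note that $E'':=T_{E'}^2E$ is again an $m$-spherical sequence of length $k$, and invoke Corollary \ref{free} after proving $a_{E,E''}=2$. Your computation of $a_{E,T_{E'}E}=1$ via the perfectness of the Serre pairing, and your degree-counting argument giving $a_{E,E''}=2$ whenever the two contributions $j=i$ and $j=i-2$ cannot interact, are correct and match the paper's first steps. The transfer from $\Sph_{E,E''}$ to $\Sph_{E,E'}$ is also fine.

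The gap is in the degenerate case you yourself flag, where $k\mid 2$ and $m_{i-1}+m_{i-2}=1$ (i.e.\ $(m,k)=(1,2)$, which is \emph{not} excluded by the hypotheses of item (1) of Theorem \ref{main} when $r=1$). There the long exact sequence only gives the dichotomy $a_{E,E''}\in\{0,2\}$, and your proposed way of excluding $a_{E,E''}=0$ does not work: you argue that a nonzero connecting composite ``would force $E$ and $T_{E'}^2E$ to be orthogonal,'' and that this is where $T_{E'}^2E\not\sim E$ must enter. But orthogonality ($a_{E,E''}=0$) is perfectly consistent with --- indeed it implies --- $E''\not\sim E$, since $a_{E,E}=2\not=0$; so the hypothesis cannot rule it out. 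If $a_{E,E''}=0$ were possible, $T_E$ and $T_{E''}$ would commute and the whole argument would collapse. The paper closes this with a separate, unconditional argument: assuming $a_{E,T_{E'}^2E}=0$, the map $E_i\to T_{E'}^2E_i$ vanishes, the octahedral axiom produces a triangle $E'_{i-2}[1-m_{i-1}-m_{i-2}]\xrightarrow{f^{E'}_{i-2}}E'_{i-1}[1-m_{i-1}]\to T_{E'}^2E_i\oplus E_i[1]$, and applying $T_{E'}$ and comparing with the unique cone of $f^{E'}_{i-3}$ forces (by Krull--Schmidt) $T_{E'}E\sim E$, contradicting $a_{E,T_{E'}E}=1\not=2=a_{E,E}$. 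Some argument of this kind is needed to complete your proof; the hypothesis $T_{E'}^2E\not\sim E$ is used only where you first invoke it, namely to make $(E,E'')$ an admissible pair for Corollary \ref{free}.
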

\begin{proof} Due to Lemma \ref{ffreetobr}, it is enough to prove that $T_E$ and $T_{E'}^{2}T_ET_{E'}^{-2}=T_{T_{E'}^{2}E}$ generate a subgroup isomorphic to the free group on two generators in the group of permutations of $\Sph_{E,E'}$. Note that $T_{E'}^{2}E$ is an $m$-spherical sequence of length $k$, and hence due to Corollary \ref{free} it is enough to check that $a_{E,T_{E'}^{2}E}=2$.

Taking the direct summands of the triangle \eqref{twgen} with $s=0,1$ we get the triangles
$$
E_i\rightarrow T_{E'}E_i\rightarrow E_{i-1}'[1-m_{i-1}]\mbox{ and }T_{E'}E_i\rightarrow T_{E'}^{2}E_i\rightarrow E_{i-2}'[2-m_{i-1}-m_{i-2}]
$$
for all $i\in\mathbb{Z}/k\mathbb{Z}$. It is not difficult to get from the first triangle that $a_{E,T_{E'}E}=1$ and that the basis of $\oplus_{l\in\mathbb{Z}}\Hom_\sT(E,T_{E'}E_i[l])$ is formed by the morphism $E_i\rightarrow T_{E'}E_i$ from the just mentioned triangle.
Since $a_{E,E'}=1$, we get from the second triangle above that either $a_{E,T_{E'}^{2}E}=2$ or $a_{E,T_{E'}^{2}E}=0$. Suppose that $a_{E,T_{E'}^{2}E}=0$. Then combining the triangles above and using the octahedral axiom we get the commutative diagram
\begin{center}
\begin{tikzpicture}[node distance=1.4cm]
 \node(A) {$E_i$};
\node(A') [below  of=A] {$E_i$};
 \node(Ar) [right  of=A] {};
 \node(Arr) [right  of=Ar] {};
 \node(Arrr) [right  of=Arr] {};
 \node(B) [right  of=Arr] {$T_{E'}E_i$};
\node(B') [below  of=B] {$T_{E'}^{2}E_i$};
 \node(Br) [right  of=B] {};
 \node(Brr) [right  of=Br] {};
 \node(Brrr) [right  of=Brr] {};
 \node(C) [right  of=Brr] {$E_{i-1}'[1-m_{i-1}]$};
\node(C') [below  of=C] {$T_{E'}^{2}E_i\oplus E_i[1]$};
\node(B'') [above  of=B] {$E_{i-2}'[1-m_{i-1}-m_{i-2}]$};
\node(C'') [above  of=C] {$E_{i-2}'[1-m_{i-1}-m_{i-2}]$};

\draw [->,>=stealth'] (A')  to node[above]{\tiny$0$ } (B') ; 

\draw [double equal sign distance] (A)  to [out=-89, in=89] (A') ;
\draw [double equal sign distance] (B'')  to [out=1, in=179] (C'') ;
\draw [->,>=stealth'] (A)  to  (B) ; 
\draw [->,>=stealth'] (B)  to  (C) ; 
\draw [->,>=stealth'] (B')  to  (C') ; 
\draw [->,>=stealth'] (B)  to  (B') ; 
\draw [->,>=stealth'] (C)  to  (C') ; 
\draw [->,>=stealth'] (B'')  to (B) ; 
\draw [->,>=stealth'] (C'')  to node[right]{\tiny$f_{i-2}^{E'}[1-m_{i-1}]$ } (C) ; 
\end{tikzpicture}
\end{center}
whose right column is a triangle for any $i\in\mathbb{Z}/k\mathbb{Z}$. Applying $T_{E'}[m_{i-1}-1]$ to the just mentioned triangle, we get also the triangle
$$
E_{i-3}'[1-m_{i-2}-m_{i-3}]\xrightarrow{f_{i-2}^{E'}[1-m_{i-2}]} E_{i-2}'[1-m_{i-2}]\rightarrow (T_{E'}^{3}E_i\oplus T_{E'}E_i[1])[m_{i-1}-1].
$$
Then it follows from the uniqueness of a triangle containing a given morphism modulo isomorphism that $(T_{E'}^{3}E_i\oplus T_{E'}E_i[1])[m_{i-1}-1]\cong T_{E'}^{2}E_{i-1}\oplus E_{i-1}[1]$, and hence either $T_{E'}E_i[m_{i-1}]\cong T_{E'}^{2}E_{i-1}$ or $T_{E'}E_i[m_{i-1}]\cong E_{i-1}[1]$. In any case we get $T_{E'}E\sim E$, which is impossible since $a_{E,T_{E'}E}=1$. The obtained contradiction implies that $a_{E,T_{E'}^{2}E}=2$, and thus the corollary is proved.
\end{proof}

Thus, we have the required faithfulness of the action of $\sB_A/Z_A$ if $T_{E'}^{2}E\not\sim E$. The next lemma shows that this condition is satisfied except the case $m=2$, $k=3$ mentioned in the first item of Theorem \ref{main} and that in the exceptional case we really have an action of the group $S_3^\mathbb{Z}$.
Moreover, in the next section we will show that this situation really can occur.

\begin{lemma}\label{exceptA}
If $r=1$ and $T_{E'}^{2}E\sim E$, then $(m,k)=(2,3)$ and the group generated by $T_E$ and $T_{E'}$ is isomorphic to $S_3^\mathbb{Z}$.
\end{lemma}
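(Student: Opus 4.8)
The plan is to turn the geometric hypothesis into a single group relation, read off $(m,k)$ by evaluating a consequence of that relation on the objects $E_i$, and then show that the resulting quotient of $S_3^{\mathbb{Z}}$ does not collapse any further.

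First I would extract the relation. Since $T_{E'}^2E\sim E$, the twist depends only on the $\sim$-class, so $T_{T_{E'}^2E}=T_E$, and Lemma \ref{comm} applied to $\Phi=T_{E'}^2$ gives $T_{E'}^2T_E\cong T_{T_{E'}^2E}T_{E'}^2=T_ET_{E'}^2$; thus $T_{E'}^2$ commutes with $T_E$. Combined with the braid relation $T_ET_{E'}T_E=T_{E'}T_ET_{E'}$ of Lemma \ref{actbr}, a short computation in the abstract group yields $T_E^2=T_{E'}^2$: multiplying the braid word on the right by $\sigma_2$ and simplifying the left side by the braid relation gives $\sigma_1^2\sigma_2\sigma_1$, while the right side becomes $\sigma_2\sigma_1\sigma_2^2=\sigma_2^3\sigma_1$ using $\sigma_1\sigma_2^2=\sigma_2^2\sigma_1$, whence $\sigma_1^2=\sigma_2^2$. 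Consequently $\gamma$ factors through $S_3^{\mathbb{Z}}=\sB_A/\langle\langle\sigma_1^2\sigma_2^{-2}\rangle\rangle$, giving a surjection $\bar\gamma\colon S_3^{\mathbb{Z}}\twoheadrightarrow\langle T_E,T_{E'}\rangle$.

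Next I would pin down $(m,k)$. In $S_3^{\mathbb{Z}}$ one has $(\sigma_1\sigma_2)^3=\sigma_1^6$, so the functor identity $(T_ET_{E'})^3=T_E^6$ holds. Iterating \eqref{twistonobj} gives $T_E^6E_i=E_{i-6}\big[6-\sum_{j=1}^{6}m_{i-j}\big]$, while the bulleted computation preceding the lemma gives $(T_ET_{E'})^3E_i=E_{i-3}\big[4-m_{i-1}-m_{i-2}-m_{i-3}\big]$. Since the members of $E$ are pairwise non-isomorphic up to shift, comparing indices forces $i-6\equiv i-3 \pmod k$, i.e. $k\mid 3$; and then comparing shifts gives a linear relation. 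If $k=1$ this reads $2=3m$, which has no integer solution, so $k=3$, and then $6-2m=4-m$ gives $m=2$. This step is short but is the numerical heart of the argument; the only delicate point is the justification that $E_{i-6}$ and $E_{i-3}$ cannot be shift-isomorphic, so that index comparison is legitimate.

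Finally I would prove $\bar\gamma$ is injective. Put $E''=T_EE'$, so $E''_i=\mathrm{cone}(E_i\xrightarrow{h_i}E'_i)$ and there is a triangle $E\to E'\to E''$; a long exact sequence count gives $u_{E'}(E''_i)=u_E(E''_i)=1$, so $[E],[E'],[E'']$ are pairwise distinct in $\Sph_{E,E'}$ (their $u$-values differ). Because $T_E^2=T_{E'}^2$ acts trivially on $\Sph_{E,E'}$, both generators act as involutions; using $T_EE'\sim E''$, $T_E^2E'\sim E'$ and $T_{E'}E_i=E''_{i-2}[1-m_{i-1}-m_{i-2}]$ (read off from $T_E^2=T_{E'}^2$) they act on $\{[E],[E'],[E'']\}$ as the transpositions $(E'\,E'')$ and $(E\,E'')$, generating a faithful $S_3=S_3^{\mathbb{Z}}/\langle\sigma_1^2\rangle$. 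The central element $z=\sigma_1^2\mapsto T_E^2$ has infinite order, since $z^3=T_E^6$ acts on $E_i$ as the nonzero shift $[6-2m]=[2]$. Writing any $w\in\ker\bar\gamma$ as $z^a$ times a coset representative of $S_3$, triviality of its action together with faithfulness of the $S_3$-part kills the representative, and then $T_E^{2a}=\mathrm{id}$ with $z$ of infinite order forces $a=0$; hence $\bar\gamma$ is an isomorphism and $\langle T_E,T_{E'}\rangle\cong S_3^{\mathbb{Z}}$. I expect the main obstacle to be this last step: correctly certifying that the three classes are genuinely distinct and that the two twists realize two distinct transpositions, so that the $S_3$-quotient acts faithfully.
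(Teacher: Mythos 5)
Your argument is correct and, in its overall architecture, matches the paper's proof: both deduce from $T_{E'}^2E\sim E$ that $T_{E'}^2$ commutes with $T_E$, extract $T_E^2=T_{E'}^2$ from this together with the braid relation by essentially the same one-line manipulation, and establish faithfulness of the resulting $S_3^{\mathbb{Z}}$-action by showing that the two twists act as two transpositions of the three distinct classes $[E]$, $[E']$, $[T_EE']=[T_{E'}E]$ in $\Sph_{E,E'}$, while the central element has infinite order because of the nonzero shift $[2t]$. The one place where you take a genuinely different route is the derivation of $(m,k)=(2,3)$: the paper analyses the triangle $T_{E'}E_i\to T_{E'}^2E_i\to E'_{i-2}[2-m_{i-1}-m_{i-2}]$ to obtain the two identifications $T_{E'}^2E_i\cong E_{i-2}[2-m_{i-1}-m_{i-2}]$ and $T_{E'}^2E_i\cong E_{i+1}[m_i]$ and compares them, whereas you first pass to the group relation, use $(\sigma_1\sigma_2)^3=\sigma_1^6$ in $S_3^{\mathbb{Z}}$, and compare the already-available expressions for $(T_ET_{E'})^3E_i$ and $T_E^6E_i$. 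Your version is shorter and recycles the bulleted formulas preceding the lemma; the paper's version has the side benefit of producing the explicit object $T_{E'}^2E_i$, which it then reuses. Both versions rest on the same implicit fact that distinct members of a spherical sequence are not shift-isomorphic (for $k\ge 3$ this follows from the $\Hom$-conditions in the definition, and the residual degenerate possibilities for $k\le 2$ are in any case excluded by the shift comparison), so you assume nothing beyond what the paper does. The only step you leave terser than the paper is the identification $[T_{E'}E]=[E'']$ with $E''=T_EE'$: your formula $T_{E'}E_i\cong E''_{i-2}[1-m_{i-1}-m_{i-2}]$ does follow from $T_{E'}E_i=T_{E'}^{-1}(T_E^2E_i)$ combined with $T_{E'}^{-1}E_j\cong E''_j[-1]$, the latter read off from the cotriangle defining $T_{E'}^{-1}$; the paper obtains the same identification from the identity $T_F(G)=T_G(\sS F)$. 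This is a one-line omission, not a flaw.
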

\begin{proof} Suppose that $T_{E'}^{2}E\sim E$. The existence of the triangle $T_{E'}E_i\rightarrow T_{E'}^{2}E_i\rightarrow E_{i-2}'[2-m_{i-1}-m_{i-2}]$ implies that the basis of $\oplus_{l\in\mathbb{Z}}\Hom_\sT(E[l],T_{E'}^{2}E_i)$ is formed by the morphisms $E_i\rightarrow T_{E'}^{2}E_i$ and $E_{i-2}[2-m_{i-1}-m_{i-2}]\rightarrow T_{E'}^{2}E_i$ fist of which can be factored as $E_i\rightarrow T_{E'}E_i\rightarrow T_{E'}^{2}E_i$. Then the morphism $E_i\rightarrow T_{E'}^{2}E_i$ cannot be invertible, and hence $T_{E'}^{2}E_i\cong E_{i-2}[2-m_{i-1}-m_{i-2}]$. Using the same triangle, one can see that the basis of $\oplus_{l\in\mathbb{Z}}\Hom_\sT(T_{E'}^{2}E_i,E[l])$ is formed by the morphisms $T_{E'}^{2}E_i\rightarrow E_{i+1}[m_i]$ and $T_{E'}^{2}E_i\rightarrow E_{i-1}[2-m_{i-1}]$ second of which is annihilated by the morphism $T_{E'}E_i\rightarrow T_{E'}^{2}E_i$. Then we get also $T_{E'}^{2}E_i\cong E_{i+1}[m_i]$. The condition $E_{i-2}[2-m_{i-1}-m_{i-2}]\cong E_{i+1}[m_i]$ can be satisfied only if $k\mid 3$ and $m_{i-1}+m_{i-2}+m_i=2$, i.e. only if $(m,k)=(2,3)$.

Let us now prove that the homomorphism $\phi:S_3^\mathbb{Z}\rightarrow \Aut(\sT)$ sending $\sigma_1$ to $T_E$ and $\sigma_2$ to $T_{E'}$ is well defined. Since we already know that $T_ET_{E'}T_E=T_{E'}T_ET_{E'}$, it is enough to show that $T_E^{2}=T_{E'}^{2}$. Since $T_{E'}^{2}E\sim E$, we have $T_{E'}^{2}T_E=T_{T_{E'}^{2}E}T_{E'}^{2}=T_ET_{E'}^{2}$. Multiplying this equality by $T_{E'}$ on the left, we get $T_{E'}^{3}T_E=T_{E'}T_ET_{E'}^{2}=T_ET_{E'}T_ET_{E'}=T_E^{2}T_{E'}T_E$, and hence $T_{E'}^{2}=T_E^{2}$.

Since $T_{E'}^{2}E\sim E$ and $T_{E'}^{2}E'\sim E'$, we get an action of the symmetric group $S_3$ on $\Sph_{E,E'}$. Note that $T_{E'}$ fixes $E'$ and interchanges $E$ and $T_{E'}E$ while $T_E$ fixes $E$ and interchanges $E'$ and $T_EE'\sim T_E^{2}T_{E'}E\sim T_{E'}E$. Then the action of $S_3$ on $\Sph_{E,E'}$ is faithful, and hence the kernel of $\phi$ is contained in the subgroup of $S_3^\mathbb{Z}$ generated by $\sigma_2^{2}$ (note that this subgroup is the center of $S_3^\mathbb{Z}$). Thus, it remains to show that $\phi(\sigma_2^{2t})=T_{E'}^{2t}\not\cong\Id_\sT$ for any nonzero integer $t$.
But it follows from \eqref{twistonobj} that $T_{E'}^{6t}E_i'=E_{i}'[2t]\not\cong E_i'$, and hence the proof of the lemma is finished.
\end{proof}

Let us now consider the case $\Gamma=B$.

\begin{corollary}\label{Bf}
If $r=2$ and $T_{E}E'\not\sim E'$, then the action of $\sB_B/Z_B$ on $\Sph_{E,E'}$ induced by $\gamma$ is faithful.
\end{corollary}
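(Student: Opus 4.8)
The plan is to mimic the proof of Corollary \ref{Af}, with the relator $\sigma_2^2\sigma_1\sigma_2^{-2}$ replaced by $\sigma_2\sigma_1\sigma_2^{-1}$ as dictated by item (2) of Lemma \ref{ffreetobr}. Concretely, by Lemma \ref{ffreetobr}(2) it suffices to show that the permutations $T_E=\gamma(\sigma_1)$ and $T_{E'}T_ET_{E'}^{-1}=T_{T_{E'}E}=\gamma(\sigma_2\sigma_1\sigma_2^{-1})$ of $\Sph_{E,E'}$ generate a free group on two generators; here I use Lemma \ref{comm} to rewrite the conjugate as a twist. Since $T_{E'}E$ is the image of $E$ under an autoequivalence, it is again an $m$-spherical sequence of length $k$, so $a_{E,T_{E'}E}=a_{T_{E'}E,E}$ by Lemma \ref{dims}, and the orbits involved lie inside $\Sph_{E,E'}$ because $\langle T_E,T_{T_{E'}E}\rangle\subseteq\langle T_E,T_{E'}\rangle$. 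Thus by Corollary \ref{free} it is enough to verify the two conditions $E\not\sim T_{E'}E$ and $a_{E,T_{E'}E}\ge 2$ (the latter giving $a_{E,T_{E'}E}\,a_{T_{E'}E,E}=a_{E,T_{E'}E}^2\ge 4$).

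For the inequality I would use the defining triangle for the twist. After the degree normalization fixed before Lemma \ref{actbr}, the object $T_{E'}E_i$ sits in a triangle
\begin{equation*}
(E'_{i-1}\oplus E'_{i+k-1})[-m_{i-1}]\xrightarrow{u} E_i\to T_{E'}E_i\to (E'_{i-1}\oplus E'_{i+k-1})[1-m_{i-1}],
\end{equation*}
where $u$ is the direct sum of the two evaluation maps (since $k'=2k$, exactly two indices $i'\in\mathbb{Z}/k'\mathbb{Z}$ satisfy $i'\equiv i-1\pmod k$, so precisely two summands of $E'$ map to each $E_i$). Applying $\Hom_\sT(E_j,-)$ and running the long exact sequence as in the computation of $a_{E,T_{E'}E}$ in Corollary \ref{Af}, the only contributions come from $j=i$ and $j=i-1$: the summand $\Hom_\sT(E_i,E_i)$ survives and contributes $1$, while for $j=i-1$ the source $\Hom_\sT(E_{i-1},(E'_{i-1}\oplus E'_{i+k-1})[-m_{i-1}])$ is two–dimensional and maps into the one–dimensional space $\Hom_\sT(E_{i-1},E_i[m_{i-1}])$, so its kernel is at least one–dimensional and contributes at least $1$ more. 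Hence $\sum_l\dim_\kk\Hom_\sT(E,T_{E'}E_i[l])\ge 2$ for every $i$, which gives $a_{E,T_{E'}E}\ge 2$.

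The hypothesis $T_EE'\not\sim E'$ is needed precisely to secure the remaining condition $E\not\sim T_{E'}E$, and I expect this to be the main obstacle. The route I would take is: if $T_{E'}E\sim E$, then $T_{T_{E'}E}=T_E$, so by Lemma \ref{comm} $T_{E'}T_E\cong T_{T_{E'}E}T_{E'}=T_ET_{E'}$, i.e. $T_E$ and $T_{E'}$ commute, whence $T_{T_EE'}\cong T_ET_{E'}T_E^{-1}=T_{E'}$. Moreover, when $T_{E'}E\sim E$ one has $a_{E,T_{E'}E}=a_{E,E}=2$, and matching the canonical two–element basis of $\oplus_l\Hom_\sT(E,T_{E'}E_i[l])$ (with sources $E_i$ and $E_{i-1}$ at the degrees read off above) against the basis $\mathrm{Id},f^E$ of $\oplus_l\Hom_\sT(E,E_j[l])$ forces $T_{E'}E_i\cong E_{i+1}[m_i]$ together with $k\mid 2$ and $m_{i-1}+m_i=1$, that is $(m,k)=(1,2)$. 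It then remains to deduce $T_EE'\sim E'$, contradicting the hypothesis; I would obtain this either from the equality of twist functors $T_{T_EE'}=T_{E'}$ (equal twist functors come from $\sim$–equivalent spherical sequences in this situation) or by the symmetric matching argument applied to $T_EE'$, which shows that in the case $(m,k)=(1,2)$ the two relations $T_{E'}E\sim E$ and $T_EE'\sim E'$ are both equivalent to the commutativity of $T_E$ and $T_{E'}$. Verifying this equivalence of the two $\sim$–conditions is the delicate point; everything else is a direct transcription of the $A_2$ argument.
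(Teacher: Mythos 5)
Your overall skeleton matches the paper's: reduce via Lemma \ref{ffreetobr}(2) to showing that two twists generate a free group of permutations of $\Sph_{E,E'}$, then invoke Corollary \ref{free} after checking that the relevant $a$-invariant is at least $2$. Your computation of $a_{E,T_{E'}E}\ge 2$ from the triangle $(E'_{i-1}\oplus E'_{i+k-1})[-m_{i-1}]\to E_i\to T_{E'}E_i$ is correct, and the reduction to $\Sph_{E,E'}$ via inclusion of orbits is fine. However, you apply Lemma \ref{ffreetobr}(2) with the literal substitution $\phi(\sigma_1)=T_E$, $\phi(\sigma_2)=T_{E'}$, so the pair you must feed into Corollary \ref{free} is $(E,\,T_{E'}E)$, and Corollary \ref{free} requires $E\not\sim T_{E'}E$ --- which is \emph{not} the hypothesis of the statement. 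The hypothesis is $T_EE'\not\sim E'$, and you correctly identify that bridging the two non-equivalences is the delicate point, but you do not close it: the claim that equal twist functors force $\sim$-equivalent spherical sequences is nowhere established in the paper, and the ``symmetric matching argument'' only shows that $T_{E'}E\sim E$ implies commutativity of the two twists (hence $T_{T_EE'}=T_{E'}$), not that $T_EE'\sim E'$. As written this is a genuine gap.

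The paper avoids the issue entirely by exploiting the symmetry of the $B_2$ relation: it applies Lemma \ref{ffreetobr}(2) with the roles of $\sigma_1$ and $\sigma_2$ interchanged, i.e.\ it works with $T_{E'}$ and $T_ET_{E'}T_E^{-1}=T_{T_EE'}$, so that the non-equivalence needed for Corollary \ref{free} is verbatim the hypothesis $T_EE'\not\sim E'$; the only computation is then $a_{E',T_EE'}=2$, read off from the triangle $E'_i\to T_EE'_i\to E_i[1]$. Your gap is repairable --- either by simply switching to the paper's choice of conjugate, or by noting that if $T_{E'}E\sim E$ then $T_E$ and $T_{E'}$ commute, so $T_{T_EE'}=T_{E'}$; combined with $a_{E',T_EE'}\ge 2$ and the assumed $T_EE'\not\sim E'$, Corollary \ref{free} would make the single element $T_{E'}$ generate a free group of rank two, a contradiction --- but that repair requires the paper's computation of $a_{E',T_EE'}$ anyway, so the swapped application of Lemma \ref{ffreetobr} is strictly more economical.
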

\begin{proof} Due to Lemma \ref{ffreetobr}, it is enough to prove that $T_{E'}$ and $T_{E}T_{E'}T_{E}^{-1}=T_{T_{E}E'}$ generate a subgroup isomorphic to the free group on two generators in the group of permutations of $\Sph_{E,E'}$. Due to Corollary \ref{free}, it is enough to check that $a_{E',T_{E}E'}=2$. This follows from the existence of the triangle
$$
E_i'\rightarrow T_{E}E_i'\rightarrow E_{i}[1]
$$
which is simply the triangle \eqref{tw1} adopted to the case under consideration.
\end{proof}

Thus, we have the required faithfulness of the action of $\sB_B/Z_B$ if $T_{E}E'\not\sim E'$. The next lemma shows that this condition is satisfied except the case $m=1$, $k=2$ mentioned in the second item of Theorem \ref{main} and that in the exceptional case we have an action of the group $(\mathbb{Z}\times \mathbb{Z})/(2t,-2t)$ for some integer $t$.
Moreover, in the next section we will show that this situation really can occur.

\begin{lemma}\label{exceptB}
If $r=2$ and $T_{E}E'\sim E'$, then $(m,k)=(1,2)$ and the group generated by $T_E$ and $T_{E'}$ is isomorphic to $(\mathbb{Z}\times \mathbb{Z})/(2t,-2t)$ for some integer $t$.
\end{lemma}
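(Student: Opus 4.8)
The plan is to proceed in two stages: first pin down $(m,k)=(1,2)$ from the shape of $T_EE'_{i'}$, and then identify the group by means of shift invariants attached to the fixed sequence $E'$. The starting point for the first stage is that $T_EE'\sim E'$ forces each $T_EE'_{i'}$ to be a single shifted summand $E'_{j'}[s]$. As in the proof of Corollary \ref{Bf}, $T_EE'_{i'}$ fits into a triangle $E_{\bar{i'}}\xrightarrow{\psi}E'_{i'}\xrightarrow{\rho}T_EE'_{i'}\xrightarrow{\theta}E_{\bar{i'}}[1]$, where $E_{\bar{i'}}$ (with $\bar{i'}\equiv i'\pmod k$) is the unique summand of $E$ admitting a nonzero map to $E'_{i'}$, using $a_{E',E}=1$. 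If either $\rho$ or $\theta$ vanished the triangle would split and $T_EE'_{i'}$ would have two non-isomorphic indecomposable summands, contradicting $T_EE'_{i'}\cong E'_{j'}[s]$; hence both are nonzero.

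From here the indices and degrees are over-determined. Nonvanishing of $\rho$ and the $\Hom$-structure of $E'$ force $j'\in\{i',i'+1\}$, and $j'=i'$ is impossible because the index condition imposed by $\theta\neq 0$ would then read $k\mid -1$. Thus $j'=i'+1$, and reading off degrees gives simultaneously $s=m_{i'}$ (from $\rho\in\Hom_\sT(E'_{i'},E'_{i'+1}[s])$) and $1-s=m_{i'-1}$ (from $\theta\in\Hom_\sT(E'_{i'+1},E_{\bar{i'}}[1-s])$), while the index condition from $\theta$ becomes $k\mid 2$. Since $k\ge 2$ this yields $k=2$, and the two expressions for $s$ give $m_{i'}+m_{i'-1}=1$, i.e. $m=m_0+m_1=1$. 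This proves $(m,k)=(1,2)$ together with the explicit formula $T_EE'_{i'}\cong E'_{i'+1}[1-m_{i'-1}]$.

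For the second stage, $T_EE'\sim E'$ and Lemma \ref{comm} give $T_ET_{E'}=T_{T_EE'}T_E=T_{E'}T_E$, so $G=\langle T_E,T_{E'}\rangle$ is abelian, a quotient $(\mathbb{Z}\times\mathbb{Z})/H$ by the relation lattice $H$. The key difficulty is that $T_E$ fixes both $E$ and $E'$ in $\Sph_{E,E'}$, hence acts trivially on all of $\Sph_{E,E'}$, so the permutation action cannot detect the order of $T_E$ and one must use finer data. Every $g\in G$ satisfies $gE'\sim E'$, so $gE'_{i'}\cong E'_{i'+t(g)}[l^g_{i'}]$; this produces a cyclic-shift homomorphism $t\colon G\to\mathbb{Z}/k'=\mathbb{Z}/4$ and a total-shift homomorphism $\Sigma\colon G\to\mathbb{Z}$, $\Sigma(g)=\sum_{i'}l^g_{i'}$, both well defined once one knows the summands $E'_{i'}$ are pairwise non-isomorphic up to shift. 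Using $T_EE'_{i'}\cong E'_{i'+1}[1-m_{i'-1}]$ and $T_{E'}E'_{i'}=\sO^{-1}E'_{i'}\cong E'_{i'-1}[1-m'_{i'-1}]$, I compute $t(T_E)=1$, $t(T_{E'})=-1$ and $\Sigma(T_E)=\Sigma(T_{E'})=k'-m'=2$. For a relation $T_E^aT_{E'}^b=\Id$ one has $t=0$ and $\Sigma=0$, i.e. $a-b\equiv 0\pmod 4$ and $2(a+b)=0$; hence $b=-a$ and then $2a\equiv 0\pmod 4$, so $a$ is even. Therefore $H\subseteq\langle(2,-2)\rangle$, and as a subgroup of this infinite cyclic lattice it equals $\langle(2t,-2t)\rangle$ for some integer $t\ge 0$, giving $G\cong(\mathbb{Z}\times\mathbb{Z})/(2t,-2t)$.

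I expect the main obstacle to be precisely this second stage rather than the numerical computation $(m,k)=(1,2)$. Because $T_E$ acts trivially on $\Sph_{E,E'}$, the ping-pong arguments used in the faithful cases are unavailable, and the entire relation lattice has to be recovered from the two grading homomorphisms on the orbit of $E'$. The decisive coincidence is $\Sigma(T_E)=\Sigma(T_{E'})$ (both equal $k'-m'$), which confines every relation to the antidiagonal $a+b=0$, after which the $\bmod\,4$ constraint from $t$ upgrades this to evenness. The point demanding the most care is the well-definedness of $t$ and $\Sigma$, namely that distinct summands of $E'$ are non-isomorphic up to shift, so that the induced permutation and the component shifts $l^g_{i'}$ are genuinely determined by $g$.
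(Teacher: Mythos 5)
Your proof is correct and follows essentially the same route as the paper: analyzing the triangle $E_{\bar{i'}}\to E'_{i'}\to T_EE'_{i'}\to E_{\bar{i'}}[1]$ to identify $T_EE'_{i'}\cong E'_{i'+1}[1-m_{i'-1}]$ and force $(m,k)=(1,2)$, then using commutativity plus the index-shift mod $4$ and total degree-shift of $T_E^aT_{E'}^b$ on $E'$ to confine the relation lattice to $\langle(2,-2)\rangle$. Your packaging of the second stage as two homomorphisms $t$ and $\Sigma$ is only a cosmetic reorganization of the paper's direct computation of $T_E^aT_{E'}^bE'_i$.
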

\begin{proof} Suppose that $T_{E}E'\sim E'$. The existence of the triangle $E_i'\rightarrow T_{E}E_i'\rightarrow E_{i}[1]$ implies that the basis of $\oplus_{l\in\mathbb{Z}}\Hom_\sT(E'[l],T_{E}E_i')$ is formed by the morphisms $E_i'\rightarrow T_{E}E_i'$ and $E_{i+k-1}'[1-m_{i-1}]\rightarrow T_{E}E_i'$ fist of which is annihilated by the nonzero morphism $T_{E}E_i'\rightarrow E_{i}[1]$, and hence cannot be isomorphism. Then $T_{E}E_i'\cong E_{i+k-1}'[1-m_{i-1}]$. Using the same triangle, one can see that the basis of $\oplus_{l\in\mathbb{Z}}\Hom_\sT(T_{E}E_i',E'[l])$ is formed by the morphisms $T_{E}E_i'\rightarrow E_{i+1}'[m_i]$ and $T_{E}E_i'\rightarrow E_{i}'[1]$ second of which is annihilated by the morphism $E_i'\rightarrow T_{E}E_i'$, and hence cannot be isomorphism. Then we get also $T_{E}E_i'\cong E_{i+1}'[m_i]$. The condition $E_{i+k-1}'[1-m_{i-1}]\cong E_{i+1}[m_i]$ can be satisfied only if $2k\mid k-2$ and $m_{i-1}+m_i=1$, i.e. only if $(m,k)=(1,2)$.

Since $T_{E}T_{E'}\cong T_{T_{E}E'}T_{E}\cong T_{E'}T_{E}$, the group generated by $T_E$ and $T_{E'}$ is isomorphic to a factor group of $\mathbb{Z}\times\mathbb{Z}$. By \eqref{twistonobj} we have $T_{E'}E_i'=E_{i-1}'[1-m_{i-1}]$ for any $i\in\mathbb{Z}/4\mathbb{Z}$. On the other hand, we have shown that $T_{E}E_i'\cong E_{i+1}'[1-m_{i-1}]$. Suppose that the element $(a,b)\in \mathbb{Z}\times\mathbb{Z}$ lies in the kernel of the homomorphism from $\mathbb{Z}\times\mathbb{Z}$ to $\Aut(\sT)$ sending $(1,0)$ to $T_E$ and $(0,1)$ to $T_{E'}$. We may assume for convenience that $a+b\ge 0$. Then we have
$E_i'\cong T_E^aT_{E'}^bE_i'=E_{i+a-b}'[a+b-\sum\limits_{s=1}^{a+b}m_{i-s}]$, and hence $4\mid a-b$ and $a+b=\frac{a+b}{2}$, i.e. $b=-a$ and $2\mid a$. Thus,  the group generated by $T_E$ and $T_{E'}$ is isomorphic to $(\mathbb{Z}\times \mathbb{Z})/(2t,-2t)$ for some integer $t$.
\end{proof}

It remains to prove the faithfulness for $\Gamma=G$.

\begin{corollary}\label{Gf}
If $r=3$, then the action of $\sB_G/Z_G$ on $\Sph_{E,E'}$ induced by $\gamma$ is faithful.
\end{corollary}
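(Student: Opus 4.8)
The plan is to run the arguments of Corollaries~\ref{Af} and~\ref{Bf} one notch higher, replacing Corollary~\ref{free} by Corollary~\ref{free3} and parts~$(1)$--$(2)$ of Lemma~\ref{ffreetobr} by part~$(3)$. By Lemma~\ref{ffreetobr}$(3)$ the action homomorphism $\sB_G/Z_G\to\mathrm{Perm}(\Sph_{E,E'})$ induced by $\gamma$ is injective as soon as
$$\gamma(\sigma_1),\qquad \gamma(\sigma_2\sigma_1\sigma_2^{-1}),\qquad \gamma(\sigma_2\sigma_1\sigma_2\sigma_1\sigma_2^{-1}\sigma_1^{-1}\sigma_2^{-1})$$
generate a free group on three generators. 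By Lemma~\ref{comm} these three autoequivalences are the twists $T_F,T_{F'},T_{F''}$ along the $m$-spherical sequences of length $k$
$$F=E,\qquad F'=T_{E'}E,\qquad F''=T_{E'}T_ET_{E'}E,$$
where for the third one I use that $\sigma_2\sigma_1\sigma_2\sigma_1\sigma_2^{-1}\sigma_1^{-1}\sigma_2^{-1}=v\sigma_1v^{-1}$ with $v=\sigma_2\sigma_1\sigma_2$ and $\gamma(v)=T_{E'}T_ET_{E'}$. So by Corollary~\ref{free3} it suffices to check that $a_{F,F'}=a_{F,F''}=a_{F',F''}=3$ and that there is a triangle $F\to F'\to F''$.

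First I would collapse the three equalities into one by exploiting the symmetry of the configuration. The autoequivalence $y=T_ET_{E'}$ satisfies $y^3=\gamma(\Delta_G)$, which acts trivially on $\Sph_{E,E'}$ by the remark preceding Corollary~\ref{Af}; hence $y$ has order dividing $3$ on $\sim$-classes and cyclically permutes $E,yE,y^2E$. Since $T_E^{-1}E\sim E$ one gets $y^2E\sim T_{E'}^{-1}E$, so applying the autoequivalence $T_{E'}^{-1}$ carries $\{F,F',F''\}$ to $\{y^2E,E,yE\}$ up to $\sim$. As $a_{\Phi X,\Phi Y}=a_{X,Y}$ for any autoequivalence $\Phi$, as $a$ is $\sim$-invariant, and as $a_{X,Y}=a_{Y,X}$ for sequences of equal length (Lemma~\ref{dims}), all three pairwise numbers coincide and equal $a_{E,T_{E'}E}$. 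Thus everything reduces to the single equality $a_{E,T_{E'}E}=3$.

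To establish it I would apply $\Hom_\sT(E_p,-)$ to the defining triangle of $T_{E'}$ at each $E_i$,
$$\bigoplus_{j=0}^{2}E'_{i-1+jk}[-m_{i-1}]\xrightarrow{g}E_i\to T_{E'}E_i\to\bigoplus_{j=0}^{2}E'_{i-1+jk}[1-m_{i-1}],$$
exactly in the style of Corollaries~\ref{Af} and~\ref{Bf}. The resulting long exact sequences show that only the components $p=i$ and $p=i+1$ of $E$ contribute, the first giving $1$ and the second giving $4-2\,\mathrm{rk}(g_*)$, where $g_*\colon\kk^{3}\to\kk$ is the map induced by $g$ on $\Hom_\sT(E_i,-[m_i])$. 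Hence $a_{E,T_{E'}E}=3$ is equivalent to the nondegeneracy statement that some composition $E_i\xrightarrow{h^{(j)}}E'_{i+jk}\xrightarrow{g^{(j)}}E_{i+1}[m_i]$ is nonzero. Were all of them to vanish one would get $a_{E,T_{E'}E}=5$ together with a $\sim$-degeneration of $T_{E'}E$, which I would exclude by an octahedral argument in the spirit of Lemma~\ref{exceptA}. (Unlike the cases $r=1,2$, for $r=3$ no pair $(m,k)$ solves the corresponding degeneration equations, which is precisely why the statement is unconditional.)

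Finally I would produce the triangle. After the reduction above this is a triangle among $y^2E,E,yE$, the natural candidate being $T_{E'}^{-1}E\to E\to T_ET_{E'}E$; transporting it by $T_{E'}$ then yields $F\to F'\to F''$. I would assemble it from the octahedral axiom, using the cocone triangle $T_{E'}^{-1}E_i\to E_i\to\bigoplus_{j}E'_{i+jk}$ of $T_{E'}^{-1}$ and the presentation of $T_ET_{E'}E_i$ obtained by applying $T_E$ to the triangle above (via $T_EE_i=E_{i-1}[1-m_{i-1}]$ and $T_EE'_{i'}\cong\mathrm{cone}(E_{i'}\xrightarrow{h_{i'}}E'_{i'})$), matching morphisms as in the proof of Lemma~\ref{actbr}; the $y$-equivariance of the whole picture reproduces the remaining rotations automatically. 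The main obstacle is exactly this last step together with the nonvanishing above: verifying that the octahedron closes up to give $T_ET_{E'}E_i$ rather than a larger object, and keeping track of all shifts and connecting maps, is where the genuine work lies, just as in Lemma~\ref{actbr}. Once the triangle and $a_{E,T_{E'}E}=3$ are available, Corollary~\ref{free3} completes the proof.
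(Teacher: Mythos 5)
Your proposal has the same skeleton as the paper's proof: Lemma \ref{ffreetobr}(3) reduces faithfulness to showing that the three conjugate twists $T_E$, $T_{T_{E'}E}$ and $T_{T_{E'}T_ET_{E'}E}$ generate a free group of rank three, and Corollary \ref{free3} reduces that to the three pairwise $a$-numbers being $3$ together with one triangle linking the three sequences. Within that skeleton you make one genuine improvement and one small slip. The improvement: using that $\gamma(\Delta_G)=(T_ET_{E'})^3$ acts trivially on $\Sph_{E,E'}$, so that $T_{E'}^{-1}$ carries the triple $\{F,F',F''\}$ to the $y$-orbit $\{y^2E,E,yE\}$ with $y=T_ET_{E'}$, you legitimately collapse the three equalities to the single one $a_{E,T_{E'}E}=3$; the paper instead computes all three cones explicitly. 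Your long-exact-sequence evaluation of $a_{E,T_{E'}E}$ is correct, and the needed nonvanishing of the compositions $E_{i-1}\to E'_{i-1+jk}\to E_i[m_{i-1}]$ is immediate from the perfect pairing in the definition of ``admits Serre functor'' (each composition is nonzero, so $\mathrm{rk}(g_*)=1$); no Lemma~\ref{exceptA}-style exclusion is needed, which is exactly why the statement is unconditional. The slip concerns the triangle: the octahedron you describe --- precomposing $E_{i-1}^2\to\bigoplus_jE'_{i-1+jk}$ with a suitable split monomorphism $E_{i-1}\to E_{i-1}^2$ --- yields $E_{i-1}\to T_{E'}^{-1}E_{i-1}[1]\to T_ET_{E'}E_i[m_{i-1}-1]$, which after applying $T_{E'}$ is the paper's triangle $E\to T_{E'}T_ET_{E'}E[\ast]\to T_{E'}E$; its cyclic order is the opposite of your announced candidate $F\to F'\to F''$, and it is not clear your order is realized at all. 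This is harmless for the conclusion, since the proof of Corollary \ref{free3} uses the triangle only through the symmetric inequalities $u_X(-)\le u_Y(-)+u_Z(-)$, so any triangle among the three classes in either cyclic order suffices --- but you should construct the one your octahedron actually produces rather than the one you named.
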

\begin{proof} Due to Lemma \ref{ffreetobr}, it is enough to prove that $T_E$, $T_{E'}T_ET_{E'}^{-1}=T_{T_{E'}E}$ and $(T_{E'}T_ET_{E'})T_E(T_{E'}T_ET_{E'})^{-1}=T_{T_{E'}T_ET_{E'}E}$ generate a subgroup isomorphic to the free group on three generators in the group of permutations of $\Sph_{E,E'}$.
Due to Corollary \ref{free3}, it is enough to check that $a_{E,T_{E'}E}=a_{E,(T_{E'}T_ET_{E'})E}=a_{T_{E'}E,(T_{E'}T_ET_{E'})E}=3$ and there exists a triangle of the form $E\rightarrow F\rightarrow T_{E'}E$ with $F\sim T_{E'}T_ET_{E'}E$. Direct calculations using the octahedral axiom show that
$$
T_{E'}E_i\cong cone\left((E_{i-1}'\oplus E_{i+k-1}'\oplus E_{i+2k-1}')[-m_{i-1}]\xrightarrow{\scriptsize \begin{pmatrix}g_{i}&g_{i+k}&g_{i+2k}\end{pmatrix}}E_i\right),\\
$$
\begin{multline*}
T_ET_{E'}E_i[m_{i-1}-1]\\
\cong cone\left(E_{i-1}^2\xrightarrow{\scriptsize\begin{pmatrix}h_{i-1}&0\\-h_{i+k-1}&h_{i+k-1}\\0&-h_{i+2k-1}\end{pmatrix}}(E_{i-1}'\oplus E_{i+k-1}'\oplus E_{i+2k-1}')\right).
\end{multline*}
and 
\begin{multline*}
T_{E'}T_ET_{E'}E_i[m_{i-1}-2]\\
\cong cone\left((E_{i-2}'\oplus E_{i+k-2}'\oplus E_{i+2k-2}')[-m_{i-2}]\xrightarrow{\scriptsize\begin{pmatrix}0&g_{i+k-1}&g_{i+2k-1}\\g_{i-1}&g_{i+k-1}&0\end{pmatrix}}E_{i-1}^2\right).
\end{multline*}
Now the equalities $a_{E,T_{E'}E}=a_{E,(T_{E'}T_ET_{E'})E}=3$ and $a_{T_{E'}E,(T_{E'}T_ET_{E'})E}=a_{E,(T_ET_{E'})E}=3$ can be easily verified. Let us choose some $\alpha,\beta\in\kk^*$ such that $\alpha+\beta\not=0$. Applying the octahedral axiom to the composition
$$\begin{pmatrix}\alpha\Id_{E_i}&\beta\Id_{E_i}\end{pmatrix}\circ \begin{pmatrix}0&g_{i+k}&g_{i+2k}\\g_{i}&g_{i+k}&0\end{pmatrix}=\begin{pmatrix}\beta g_{i}&(\alpha+\beta)g_{i+k}&\alpha g_{i+2k}\end{pmatrix}$$
and noting that $cone\begin{pmatrix}\beta g_{i}&(\alpha+\beta)g_{i+k}&\alpha g_{i+2k}\end{pmatrix}\cong cone\begin{pmatrix}g_{i}&g_{i+k}& g_{i+2k}\end{pmatrix}$, we get the triangles
$$
E_i\rightarrow T_{E'}T_ET_{E'}E_{i+1}[m_{i}-2]\rightarrow T_{E'}E_i
$$
for all $i\in\mathbb{Z}/k\mathbb{Z}$. Taking the direct sum of these triangles, we get the required triangle $E\rightarrow F\rightarrow T_{E'}E$ with $F\sim T_{E'}T_ET_{E'}E$.
\end{proof}

Let us now deduce Theorem \ref{main} from our results.

\begin{proof}[Proof of Theorem \ref{main}] Since item \ref{last} is already proved, we may assume that we are in the settings of one of the items \ref{a2}, \ref{b2} and \ref{g2}.
Due to Corollaries \ref{Af}, \ref{Bf}, \ref{Gf} and Lemmas \ref{exceptA} and \ref{exceptB}, if the action of $\sB_\Gamma/Z_{\Gamma}$ on $\Sph_{E,E'}$ is not faithful, then either $r=1$, $(m,k)=(2,3)$ and the group generated by $T_E$ and $T_{E'}$ is isomorphic to $S_3^{\mathbb{Z}}$ or $r=2$, $(m,k)=(1,2)$ and the group generated by $T_E$ and $T_{E'}$ is isomorphic to $(\mathbb{Z}\times \mathbb{Z})/(2t,-2t)$ for some integer $t$, i.e. some condition of Theorem \ref{main} is satisfied.

If the action of $\sB_\Gamma/Z_{\Gamma}$ on $\Sph_{E,E'}$ is faithful, then the group generated by $T_E$ and $T_{E'}$ is isomorphic to $\sB_{\Gamma}/\langle\Delta_\Gamma^t\rangle$ for some nonnegative integer $t$. Let us consider all values of $\Gamma$ separately.
\begin{itemize}
\item Suppose that $\Gamma=A$. Then $t$ has to satisfy the condition $(T_ET_{E'})^{3t}\cong \Id_{\sT}$. Then we have $E_i'\cong (T_ET_{E'})^{3t}E_i'=E_{i-3t}'\left[4t-\sum\limits_{s=1}^{3t}m_{i-s}\right]$. Then we have $k\mid 3t$, i.e. $\frac{k}{gcd(k,3)}\mid t$. Now we have $(T_ET_{E'})^{3a\frac{k}{gcd(k,3)}}E_i'=E_{i'}\left[\frac{(4k-3m)a}{gcd(k,3)}\right]$, i.e. $t$ can be nonzero only if $3m=4k$.
\item Suppose that $\Gamma=B$. Then $t$ has to satisfy the condition $(T_ET_{E'})^{2t}\cong \Id_{\sT}$. Then we have $E_i'\cong (T_ET_{E'})^{2t}E_i'=E_{i+(k-2)t}'\left[3t-\sum\limits_{s=1}^{2t}m_{i-s}\right]$. Then we have $2k\mid (k-2)t$, i.e. $\frac{2k}{gcd(k-2,4)}\mid t$. Now we have $(T_ET_{E'})^{2a\frac{2k}{gcd(k-2,4)}}E_i'=E_{i'}\left[	\frac{(3k-2m)2a}{gcd(k-2,4)}\right]$, i.e. $t$ can be nonzero only if $2m=3k$.
\item Suppose that $\Gamma=G$.  Then $t$ has to satisfy the condition $(T_ET_{E'})^{3t}\cong \Id_{\sT}$. Then we have $E_i'\cong (T_ET_{E'})^{3t}E_i'=E_{i-3t}'\left[5t-\sum\limits_{s=1}^{3t}m_{i-s}\right]$, and hence $k\mid t$. Now we have $(T_ET_{E'})^{3ak}E_i'=E_{i'}[(5k-3m)a]$, i.e. $t$ can be nonzero only if $3m=5k$.
\end{itemize}
\end{proof}

\section{Derived categories of hereditary algebras and non faithful actions of braid groups}

In this section we will consider spherical sequences in the bounded derived categories of hereditary algebras of types $A_3$ and $D_4$. Note that due to \cite{Hap} these categories are equivalent to the stable categories of associated repetitive algebras.
In fact there are several hereditary algebras and corresponding to them repetitive algebras of types $A_3$ and $D_4$, but the bounded derived categories of hereditary algebras of the same type are equivalent.

The bounded derived category of a hereditary algebra of type $D_4$ that we will denote by $\Db(D_4)$ can be described in the following way (see \cite{Hap}). Let us consider the quiver $\mathbb{Z}D_4$ with the vertex set $\{0,1,2,3\}\times\mathbb{Z}$ and the arrows $(0,s)\xrightarrow{\alpha_{r,s}} (r,s)$ and $(r,s)\xrightarrow{\beta_{r,s}}(0,s+1)$ for $r\in\{1,2,3\}$, $s\in\mathbb{Z}$. Let us consider the ideal $I_{D_4}$ of $\kk\mathbb{Z}D_4$ generated by linear combinations of paths $\alpha_{r,s}\beta_{r,s-1}$ and $\beta_{1,s}\alpha_{1,s}+\beta_{2,s}\alpha_{2,s}+\beta_{3,s}\alpha_{3,s}$ for $r\in\{1,2,3\}$, $s\in\mathbb{Z}$.
The ideal $I_{D_4}$ is called the {\it mesh ideal}. The category whose objects are the vertices of $\mathbb{Z}D_4$, morphisms from $e_1$ to $e_2$ are elements of $e_2(\kk\mathbb{Z}D_4/I_{D_4})e_1$ and the composition is induced by the multiplication in $\kk\mathbb{Z}D_4/I_{D_4}$ is called the {\it mesh category} of $D_4$. It is denoted by $\kk(D_4)$. The subcategory of $\Db(D_4)$ formed by indecomposable objects is equivalent to $\kk(D_4)$. The category $\Db(D_4)$ has a Serre functor $\sS$ and the shift functor $[1]$. On indecomposable objects these functors are defined by the equalities $\sS(r,s)=(r,s+2)$ and $(r,s)[1]=(r,s+3)$.
Let us define $E_i=(1,-i)$ and $E_i'=(2,1-i)$ for $i\in\{0,1,2\}$. Then $E$ and $E'$ are sherical sequences of length $3$ and sphericity $2$ with $m_0=m_1=m_0'=m_1'=1$ and $m_2=m_2'=0$. Moreover, they are adjusted in such a way that $\Hom_{\Db(D_4)}(E_i,E_j'[l])$ is one dimensional if $i=j$ and $l=0$ and equals zero otherwise.

\begin{center}
\begin{tikzpicture}[node distance=1.1cm]
 \node(d3) {\tiny$\cdots$};
 \node(d2) [below  of=d3] {\tiny$\cdots$};
 \node(d1) [below  of=d2] {\tiny$\cdots$};
 \node(m20) [right  of=d2] {\tiny$(0,\!-2)$};
 \node(m22) [right  of=m20] {\tiny$(2,\!-2)$};
 \node(m21) [below  of=m22] {\tiny$E_2$};
 \node(m23) [above  of=m22] {\tiny$(3,\!-2)$};
 \node(m10) [right  of=m22] {\tiny$(0,\!-1)$};
 \node(m12) [right  of=m10] {\tiny$E_2'$};
 \node(m11) [below  of=m12] {\tiny$E_1$};
 \node(m13) [above  of=m12] {\tiny$(3,\!-1)$};
 \node(00) [right  of=m12] {\tiny$(0,0)$};
 \node(02) [right  of=00] {\tiny$E_1'$};
 \node(01) [below  of=02] {\tiny$E_0$};
 \node(03) [above  of=02] {\tiny$(3,0)$};
 \node(10) [right  of=02] {\tiny$(0,1)$};
 \node(12) [right  of=10] {\tiny$E_0'$};
 \node(11) [below  of=12] {\tiny$E_2[1]$};
 \node(13) [above  of=12] {\tiny$(3,1)$};
 \node(20) [right  of=12] {\tiny$(0,2)$};
 \node(22) [right  of=20] {\tiny$(2,2)$};
 \node(21) [below  of=22] {\tiny$(1,2)$};
 \node(23) [above  of=22] {\tiny$T_{E}E_0'\cong T_{E'}E_2[1]$};
 \node(f) [right  of=22] {\tiny$\cdots$};
\draw [->,>=stealth'] (d1)  to  (m20) ; 
\draw [->,>=stealth'] (d2)  to  (m20) ; 
\draw [->,>=stealth'] (d3)  to  (m20) ; 

\draw [->,>=stealth']  (m20) to (m21); 
\draw [->,>=stealth']  (m20)  to (m22) ; 
\draw [->,>=stealth']  (m20)  to (m23); 
\draw [->,>=stealth']  (m21) to (m10); 
\draw [->,>=stealth']  (m22) to (m10); 
\draw [->,>=stealth']  (m23) to (m10);

\draw [->,>=stealth']  (m10) to (m11); 
\draw [->,>=stealth']  (m10)  to (m12) ; 
\draw [->,>=stealth']  (m10)  to (m13); 
\draw [->,>=stealth']  (m11) to (00); 
\draw [->,>=stealth']  (m12) to (00); 
\draw [->,>=stealth']  (m13) to (00);

\draw [->,>=stealth']  (00) to (01); 
\draw [->,>=stealth']  (00)  to (02); 
\draw [->,>=stealth']  (00)  to (03); 
\draw [->,>=stealth']  (01) to (10); 
\draw [->,>=stealth']  (02) to (10); 
\draw [->,>=stealth']  (03) to (10);

\draw [->,>=stealth']  (10) to (11); 
\draw [->,>=stealth']  (10)  to (12) ; 
\draw [->,>=stealth']  (10)  to (13); 
\draw [->,>=stealth']  (11) to (20); 
\draw [->,>=stealth']  (12) to (20); 
\draw [->,>=stealth']  (13) to (20);

\draw [->,>=stealth']  (20) to (21); 
\draw [->,>=stealth']  (20)  to (22) ; 
\draw [->,>=stealth']  (20)  to (23); 
\draw [->,>=stealth']  (21) to (f); 
\draw [->,>=stealth']  (22) to (f); 
\draw [->,>=stealth']  (23) to (f);
\end{tikzpicture}
\end{center}

Now, applying the octahedral axiom and using the AR triangles $E_0\xrightarrow{\beta_{1,0}} (0,1)\xrightarrow{\alpha_{1,1}} E_2[1]$ and $E_1'\xrightarrow{\beta_{2,0}} (0,1)\xrightarrow{\alpha_{2,1}} E_0'$ we get the commutative diagram
\begin{center}
\begin{tikzpicture}[node distance=1cm]
 \node(A) {$E_0$};
\node(A') [below  of=A] {$E_0$};
 \node(Ar) [right  of=A] {};
 \node(Arr) [right  of=Ar] {};
 \node(Arrr) [right  of=Arr] {};
 \node(B) [right  of=Ar] {$(0,1)$};
\node(B') [below  of=B] {$E_0'$};
 \node(Br) [right  of=B] {};
 \node(Brr) [right  of=Br] {};
 \node(Brrr) [right  of=Brr] {};
 \node(C) [right  of=Br] {$E_2[1]$};
\node(C') [below  of=C] {$X$};
\node(B'') [above  of=B] {$E_1'$};
\node(C'') [above  of=C] {$E_1'$};

\draw [->,>=stealth'] (A')  to node[above]{\tiny$\alpha_{2,1}\beta_{1,0}$ } (B') ; 

\draw [double equal sign distance] (A)  to [out=-89, in=89] (A') ;
\draw [double equal sign distance] (B'')  to [out=1, in=179] (C'') ;
\draw [->,>=stealth'] (A)  to node[above]{\tiny$\beta_{1,0}$ }  (B) ; 
\draw [->,>=stealth'] (B)  to  node[above]{\tiny$\alpha_{0,1}$ } (C) ; 
\draw [->,>=stealth'] (B')  to  (C') ; 
\draw [->,>=stealth'] (B)  to node[right]{\tiny$\alpha_{2,1}$ }  (B') ; 
\draw [->,>=stealth'] (C)  to  (C') ; 
\draw [->,>=stealth'] (B'')  to node[right]{\tiny$\beta_{2,0}$ } (B) ; 
\draw [->,>=stealth'] (C'')  to node[right]{\tiny$\alpha_{0,1}\beta_{2,0}$ } (C) ; 
\end{tikzpicture}
\end{center}
whose rows and columns are triangles. The right vertical and lower horizontal triangles give the isomorphisms $T_EE_0'\cong X\cong T_{E'}E_2[1]$. Thus, $T_{E'}E\sim T_EE'$, and hence $T_{E'}^2E\sim T_{E'}T_EE'\sim E$. In fact, one can show that $T_{E}E_0'\cong (3,2)$ and that the action of $T_E$ and $T_{E'}$ on the vertices of $\mathbb{Z}D_4$ is defined by the equalities
\begin{multline*}
T_E(0,i)=T_{E'}(0,i)=(0,i+1),\,T_E(1,i)=(1,i+1),\,T_{E'}(1,i)=(3,i+1),\\T_E(2,i)=(3,i+1),\,T_{E'}(2,i)=(2,i+1),\,T_E(3,i)=(2,i+1),\,T_{E'}(3,i)=(1,i+1).
\end{multline*}
Since we get the condition $T_{E'}^2E\sim E$, the subgroup of $\Aut\big(\Db(D_4)\big)$ generated by $T_E$ and $T_{E'}$ is isomorphic to $S_3^{\mathbb{Z}}$ by Lemma \ref{exceptA}.

In analogous way we describe the bounded derived category $\Db(A_3)$ of a hereditary algebra of type $A_3$. The quiver $\mathbb{Z}A_3$ has the vertex set $\{-1,0,1\}\times\mathbb{Z}$ and the arrows $(0,s)\xrightarrow{\alpha_{r,s}} (r,s)$ and $(r,s)\xrightarrow{\beta_{r,s}}(0,s+1)$ for $r=\pm 1$ and $s\in\mathbb{Z}$. The mesh ideal $I_{A_3}$ of $\kk\mathbb{Z}A_3$ is generated by linear combinations of paths $\alpha_{r,s}\beta_{r,s-1}$ and $\beta_{-1,s}\alpha_{-1,s}+\beta_{1,s}\alpha_{1,s}$ for $r=\pm 1$ and $s\in\mathbb{Z}$.
Then we get the corresponding mesh category $\kk(A_3)$ equivalent to the subcategory of $\Db(A_3)$ formed by indecomposable objects.
The category $\Db(A_3)$ has the Serre functor $\sS$ and the shift functor $[1]$ that are defined on indecomposable objects by the equalities $\sS(r,s)=(-r,s+1)$ and $(r,s)[1]=(-r,s+2)$.
Let us define $E_i=(0,-i)$, $E_i'=(1,-i)$ and $E_{i+2}'=(-1,-i)$ for $i\in\{0,1\}$. Then $E$ is a sherical sequence of length $2$ and sphericity $1$ and $E'$ is a sherical sequence of length $4$ and sphericity $2$ with $m_0=m_0'=m_2'=1$ and $m_1=m_1'=m_3'=0$. Moreover, they are adjusted in such a way that $\Hom_{\Db(A_3)}(E_i,E_j'[l])$ is one dimensional if $2\mid j-i$ and $l=0$ and equals zero otherwise. Since $E_1'\xrightarrow{\beta_{1,-1}}E_0\xrightarrow{\alpha_{1,0}}E_0'$ is an AR triangle, we have $T_EE_0'\cong E_1'[1]$, and hence $T_EE'\sim E'$. Then the subgroup of $\Aut\big(\Db(A_3)\big)$ generated by $T_E$ and $T_{E'}$ is isomorphic to $(\mathbb{Z}\times \mathbb{Z})/(2t,-2t)$ for some integer $t$ by Lemma \ref{exceptB}. In fact, one can show that $T_E^2\cong T_{E'}^2$ in this case, and hence the subgroup of $\Aut\big(\Db(A_3)\big)$ generated by $T_E$ and $T_{E'}$ is isomorphic to $(\mathbb{Z}\times \mathbb{Z})/(2,-2)\cong \mathbb{Z}\times \mathbb{Z}/2\mathbb{Z}$.

\begin{center}
\begin{tikzpicture}[node distance=1.1cm]
 \node(d3) {\tiny$\cdots$};
 \node(d2) [below  of=d3] {};
 \node(d1) [below  of=d2] {\tiny$\cdots$};
 \node(m20) [right  of=d2] {\tiny$(0,\!-2)$};
 \node(m22) [right  of=m20] {};
 \node(m21) [below  of=m22] {\tiny$(1,\!-2)$};
 \node(m23) [above  of=m22] {\tiny$(-1,\!-2)$};
 \node(m10) [right  of=m22] {\tiny$E_1$};
 \node(m12) [right  of=m10] {};
 \node(m11) [below  of=m12] {\tiny$E_1'$};
 \node(m13) [above  of=m12] {\tiny$E_3'$};
 \node(00) [right  of=m12] {\tiny$E_0$};
 \node(02) [right  of=00] {};
 \node(01) [below  of=02] {\tiny$E_0'$};
 \node(03) [above  of=02] {\tiny$E_2'$};
 \node(10) [right  of=02] {\tiny$E_1[1]$};
 \node(12) [right  of=10] {};
 \node(11) [below  of=12] {\tiny$E_3'[1]$};
 \node(13) [above  of=12] {\tiny$T_EE_0'\cong E_1'[1]$};
 \node(20) [right  of=12] {\tiny$(0,2)$};
 \node(22) [right  of=20] {};
 \node(21) [below  of=22] {\tiny$(1,2)$};
 \node(23) [above  of=22] {\tiny$(-1,2)$};
 \node(f) [right  of=22] {\tiny$\cdots$};
\draw [->,>=stealth'] (d1)  to  (m20) ; 
\draw [->,>=stealth'] (d3)  to  (m20) ; 

\draw [->,>=stealth']  (m20) to (m21); 
\draw [->,>=stealth']  (m20)  to (m23); 
\draw [->,>=stealth']  (m21) to (m10); 
\draw [->,>=stealth']  (m23) to (m10);

\draw [->,>=stealth']  (m10) to (m11); 
\draw [->,>=stealth']  (m10)  to (m13); 
\draw [->,>=stealth']  (m11) to (00); 
\draw [->,>=stealth']  (m13) to (00);

\draw [->,>=stealth']  (00) to (01); 
\draw [->,>=stealth']  (00)  to (03); 
\draw [->,>=stealth']  (01) to (10); 
\draw [->,>=stealth']  (03) to (10);

\draw [->,>=stealth']  (10) to (11); 
\draw [->,>=stealth']  (10)  to (13); 
\draw [->,>=stealth']  (11) to (20); 
\draw [->,>=stealth']  (13) to (20);

\draw [->,>=stealth']  (20) to (21); 
\draw [->,>=stealth']  (20)  to (23); 
\draw [->,>=stealth']  (21) to (f); 
\draw [->,>=stealth']  (23) to (f);
\end{tikzpicture}
\end{center}

\section{Application to derived Picard groups}

All modules in this paper are right module. All algebras considered in this section are finite dimensional. A complex $X$ is by definition a $\mathbb{Z}$-graded module with a differential $d$ of degree $1$, i.e. e sequence $\cdots\rightarrow X_i\xrightarrow{d_{i+1}}X_{i+1}\xrightarrow{d_{i+2}}X_{i+2}\rightarrow\cdots$ such that $d_{i+1}d_i=0$ for any integer $i$. The complex $X$ is concentrated in degrees from $l$ to $r$ if $X_i=0$ for $i<l$ and $i>r$. If at the same time $X_l,X_r\not=0$, then we say that $X$ has length $r-l+1$.
For an algebra $\Lambda$, we denote by $\Cb_{\Lambda}$, $\Kbp_{\Lambda}$ and $\Db_{\Lambda}$ the category of bounded complexes of finitely generated $\Lambda$-modules, the  bounded homotopy category of  finitely generated projective $\Lambda$-modules and the bounded derived category of  finitely generated $\Lambda$-modules respectively. We denote by $J_\Lambda$ the Jacobson radical of $\Lambda$. Let us recall also that any object of $\Kbp_{\Lambda}$ can be represented by a unique modulo isomorphism in the category $\Cb_{\Lambda}$ complex $(X,d)$ such that $\Im d\subset XJ_\Lambda$. Such a complex $X$ is called {\it a radical complex}. If the complexes $X$ and $Y$ represent the same element of $\Kbp_{\Lambda}$ and $X$ is radical, then $X$ is called the {\it radical representative} of $Y$. We denote by $L(Y)$ the length of the radical representative of $Y$.

Let us recall that $X\in\Kbp_{\Lambda}$ is called {\it pretilting complex} if $\Hom_{\Kbp_{\Lambda}}(X,X[i])=0$ for any nonzero integer $i$.
If $X$ is pretilting and additionally the smallest full triangulated subcategory of $\Kbp_{\Lambda}$ which contains $X$ and is closed under direct summands coincides with $\Kbp_{\Lambda}$, then $X$ is called {\it tilting}.

It was proved in \cite{Ric} that the algebras $\Lambda$ and $\Gamma$ are derived equivalent if and only if there exists a tilting complex $X\in\Kbp_{\Lambda}$ such that $\End_{\Kbp_{\Lambda}}(X)$ is isomorphic to $\Gamma$ as a $\kk$-algebra.
In the same paper it is explained how to construct an equivalence from $\Db_{\Gamma}$ to $\Db_{\Lambda}$ sending $\Gamma$ to $X$ using the tilting complex $X$ and an algebra isomorphism $\Gamma\cong\End_{\Kbp_{\Lambda}}(X)$.
One can look also in \cite{VZ1,VZ2} how to construct an equivalence from $\Kbp_{\Gamma}$ to $\Kbp_{\Lambda}$ using the same data. In the current paper we will use the fact that if $U=(U_i\rightarrow\dots\rightarrow U_j)$ is an object of $\Kbp_{\Gamma}$, then the corresponding equivalence sends $U$ to a totalization of a bicomplex whose $k$-th column is the images of $U_k$ under this equivalence, while the image of $U_k$ can be calculated using the fact that $U_k$ is a direct sum of direct summands of $\Gamma$.
Equivalences that can be constructed using the just mentioned algorithm are called standard equivalences. Standard equivalences from $\Kbp_{\Lambda}$ to itself considered modulo natural isomorphisms constitute a group under composition which is called the {\it derived Picard group} of $\Lambda$ and is denoted by $\TrPic(\Lambda)$ (see \cite{VZ1}). This group was first introduced in \cite{RZ,Ye1} as a group of tilting complexes of $\Lambda$-bimodules under the operation of derived tensor product.

Let us recall that, for a finite dimensional algebra $\Lambda$, the Picard group $\Pic(\Lambda)$ is the group of autoequivalences of the category of $\Lambda$-modules modulo natural isomorphisms. If $\Lambda$ is basic, then this group is isomorphic to the group of outer automorphisms $\Out(\Lambda)=\Aut(\Lambda)/\Inn(\Lambda)$. Here $\Aut(\Lambda)$ is the group of automorphisms of $\Lambda$ and $\Inn(\Lambda)$ is the group of inner automorphisms of $\Lambda$. This isomorphism is induced by the map $\Aut(\Lambda)\rightarrow \Pic(\Lambda)$ that sends an automorphism $\theta:\Lambda\rightarrow\Lambda$ to the autoequivalence $-\otimes_{\Lambda}\Lambda_{\theta^{-1}}$. Here $\Lambda_{\theta^{-1}}$ is  the bimodule coinciding with $\Lambda$ as a left module and having the right multiplication $*$ by elements of $\Lambda$ defined by the equality $x*a=x\theta^{-1}(a)$, where the multiplication on the right side is the original multiplication of $\Lambda$. The group $\Pic(\Lambda)$ is a subgroup of $\TrPic(\Lambda)$ in a natural way. Moreover, an element of $\TrPic(\Lambda)$ belongs to $\Pic(\Lambda)$ if and only if the radical representative of the corresponding tilting complex is concentrated in degree zero. In fact, the autoequivalence $-\otimes_{\Lambda}\Lambda_{\theta^{-1}}$ can be defined by the tilting complex $\Lambda$ and the isomorphism from $\Lambda$ to $\End_{\Lambda}(\Lambda)$ that sends $x\in\Lambda$ to the left multiplication by $\theta(x)$.
Let us recall also that $\Pic_0(\Lambda)$ is the subgroup of $\Pic(\Lambda)$ fixing all $\Lambda$-modules. Unlike $\Pic(\Lambda)$, the group $\Pic_0(\Lambda)$ is preserved by standard derived equivalences.

Let us now introduce the series of algebras $\Lambda_k$ ($k\ge 1)$. We define the quiver $Q_k$. Its vertex set is $\mathbb{Z}/k\mathbb{Z}\cup \mathbb{Z}/3k\mathbb{Z}$. For an integer $i$ we will denote by $i$ its class in $\mathbb{Z}/3k\mathbb{Z}$ and by $\overline{i}$ its class in $\mathbb{Z}/k\mathbb{Z}$. 
The arrows of $Q_k$ are $\alpha_i:\overline{i}\rightarrow i$ and $\beta_i:i\rightarrow \overline{i+1}$ ($i\in \mathbb{Z}/3k\mathbb{Z}$).

\begin{center}
\begin{tikzpicture}[node distance=1.0cm]
 \node(d3) {\tiny$\cdots$};
 \node(d2) [below  of=d3] {\tiny$\cdots$};
 \node(d1) [below  of=d2] {\tiny$\cdots$};
 \node(00) [right  of=d2] {\tiny$\overline{k-1}$};
 \node(00r) [right  of=00] {};
 \node(02) [right  of=00r] {\tiny$2k-1$};
 \node(01) [below  of=02] {\tiny$k-1$};
 \node(03) [above  of=02] {\tiny$3k-1$};
 \node(02r) [right  of=02] {};
 \node(10) [right  of=02r] {\tiny$\overline{0}$};
 \node(10r) [right  of=10] {};
 \node(12) [right  of=10r] {\tiny$k$};
 \node(11) [below  of=12] {\tiny$0$};
 \node(13) [above  of=12] {\tiny$2k$};
 \node(12r) [right  of=12] {};
 \node(20) [right  of=12r] {\tiny$\overline{1}$};
 \node(20r) [right  of=20] {};
 \node(22) [right  of=20r] {\tiny$k+1$};
 \node(21) [below  of=22] {\tiny$1$};
 \node(23) [above  of=22] {\tiny$2k+1$};
 \node(f) [right  of=22] {\tiny$\cdots$};
\draw [->,>=stealth'] (d1)  to  (00) ; 
\draw [->,>=stealth'] (d2)  to  (00) ; 
\draw [->,>=stealth'] (d3)  to  (00) ; 

\draw [->,>=stealth']  (00) to node[above]{\tiny$\alpha_{k-1}$ } (01); 
\draw [->,>=stealth']  (00)  to node[above]{\tiny$\alpha_{2k-1}$} (02); 
\draw [->,>=stealth']  (00)  to node[above]{\tiny$\alpha_{3k-1}$} (03); 
\draw [->,>=stealth']  (01) to node[above]{\tiny$\beta_{k-1}$ } (10); 
\draw [->,>=stealth']  (02) to node[above]{\tiny$\beta_{2k-1}$ } (10); 
\draw [->,>=stealth']  (03) to node[above]{\tiny$\beta_{3k-1}$ } (10);

\draw [->,>=stealth']  (10) to  node[above]{\tiny$\alpha_{0}$ } (11); 
\draw [->,>=stealth']  (10)  to  node[above]{\tiny$\alpha_{k}$ } (12) ; 
\draw [->,>=stealth']  (10)  to  node[above]{\tiny$\alpha_{2k}$ } (13); 
\draw [->,>=stealth']  (11) to node[above]{\tiny$\beta_{0}$ } (20); 
\draw [->,>=stealth']  (12) to node[above]{\tiny$\beta_{k}$ } (20); 
\draw [->,>=stealth']  (13) to node[above]{\tiny$\beta_{2k}$ } (20);

\draw [->,>=stealth']  (20) to  node[above]{\tiny$\alpha_{0}$ } (21); 
\draw [->,>=stealth']  (20)  to  node[above]{\tiny$\alpha_{k+1}$ } (22) ; 
\draw [->,>=stealth']  (20)  to  node[above]{\tiny$\alpha_{2k+1}$ } (23); 
\draw [->,>=stealth']  (21) to (f); 
\draw [->,>=stealth']  (22) to (f); 
\draw [->,>=stealth']  (23) to (f);
\end{tikzpicture}
\end{center}
Let $I_k$ be the ideal of $\kk Q_k$ generated by the elements $\alpha_{i+k+1}\beta_i$,  $\alpha_{i+2k+1}\beta_i$, $\beta_i\alpha_i-\beta_{i+k}\alpha_{i+k}$ and $\beta_i\alpha_i-\beta_{i+2k}\alpha_{i+2k}$ for all $i\in\mathbb{Z}/3k\mathbb{Z}$. We set $\Lambda_k=\kk Q_k/I_k$. This algebra is a selfinjective algebra of finite representation type with tree type $D_4$, frequency $k$ and torsion order $3$. In fact, $\Lambda_k$ is a unique modulo isomorphism basic algebra with such tree type, frequency and torsion order. We denote by $e_x$ the idempotent of $\Lambda_k$ corresponding to the vertex $x\in\mathbb{Z}/k\mathbb{Z}\cup \mathbb{Z}/3k\mathbb{Z}$. We also set $P_x=e_x\Lambda_k$ and $P_{x,y}=\Lambda_ke_x\otimes e_y\Lambda_k$. Thus, $P_x$ is the projective $\Lambda_k$-module corresponding to the idempotent $e_x$ and $P_{x,y}$ is the projective $\Lambda_k$-bimodule corresponding to the idempotent $e_x\otimes e_y$.

The algebra $\Lambda_k$ has Nakayama automorphism $\nu$ of order $3k$ defined by the equalities $\nu(e_i)=e_{i-1}$, $\nu(e_{\overline{i}})=e_{\overline{i-1}}$, $\nu(\alpha_i)=\alpha_{i-1}$ and $\nu(\beta_i)=\beta_{i-1}$. Note that due to \cite{Ric2} the functor $-\otimes_{\Lambda_k}(\Lambda_k)_{\nu}$ commutes with any standard derived equivalence. This means, in particular, that $X_{\nu}\cong X$ in $\Cb_{\Lambda_k}$ for any radical tilting complex $X$.

It is not difficult to see that $P_{\overline{i}}$ ($i\in\mathbb{Z}/k\mathbb{Z}$) form a $0$-spherical sequence $E$ of length $k$ while $P_{i}$ ($i\in\mathbb{Z}/3k\mathbb{Z}$) form a $0$-spherical sequence $E'$ of length $3k$. Moreover, we have $\Hom_{\Kbp_{\Lambda_k}}(E,E')=3k$ and $\Hom_{\Kbp_{\Lambda_k}}(E,E'[l])=0$ for any nonzero integer $l$. Thus, we can apply item \ref{g2} of Theorem \ref{main} to conclude that $T_E$ and $T_{E'}$ generate a subgroup of $\TrPic(\Lambda_k)$ isomorphic to $\sB_{G_2}$. In fact, $T_E$ and $T_{E'}$ are the functors of the tensor multiplication by the complexes of $\Lambda_k$-bimodules 
$$
C_E=\bigoplus\limits_{i\in\mathbb{Z}/k\mathbb{Z}}P_{\overline{i},\overline{i}}\xrightarrow{\mu_E}\Lambda_k\mbox{ and }C_{E'}=\bigoplus\limits_{i\in\mathbb{Z}/3k\mathbb{Z}}P_{i,i}\xrightarrow{\mu_{E'}}\Lambda_k
$$
respectively. In both cases $\Lambda_k$ is placed in the zero degree. The maps $\mu_E$ and $\mu_{E'}$ are defined by the equalities $\mu_E(u\otimes v)=uv$ for $u,v\in P_{\overline{i},\overline{i}}$ and $\mu_{E'}(u\otimes v)=uv$ for $u,v\in P_{i,i}$. We will show that in fact $T_E$ and $T_{E'}$ generate almost the whole group $\TrPic(\Lambda_k)$. But first we will prove some technical lemmas. In fact all details of our proof except Lemma \ref{gener} below will be taken from the proof of the second part of \cite[Theorem 1]{VZ2}. We give them here for the convenience of the reader, but since the difference with the mentioned proof is minor, we will not be very detailed.

\begin{lemma}\label{Pic} One has $\Out(\Lambda_k)\cong \mathbb{Z}/3k\mathbb{Z}\times \kk^*$, where the generator of $\mathbb{Z}/3k\mathbb{Z}$ is the Nakayama automorphism $\nu$ and the elements $\varepsilon\in\kk^*$ corresponds to the class of the automorphism that is identical on $e_i$, $e_{\overline{i}}$, $\alpha_i$ and $\beta_j$ for all integer $i$ and all integer $j$ not divisible by $k$ and sends $\beta_{j}$ to $\varepsilon\beta_j$ if $k\mid j$. In particular, $\Pic_0(\Lambda_k)\cong\kk^*$.
\end{lemma}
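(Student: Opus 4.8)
The plan is to compute $\Aut(\Lambda_k)$ and then pass to $\Out(\Lambda_k)\cong\Pic(\Lambda_k)$, using that $\Lambda_k$ is basic and connected. First I would reduce an arbitrary automorphism $\theta$ modulo $\Inn(\Lambda_k)$. Since any two complete sets of primitive orthogonal idempotents of a basic algebra are conjugate by an inner automorphism, after composing $\theta$ with a suitable inner automorphism we may assume $\theta$ permutes the set $\{e_x\}$, inducing a permutation $\pi$ of the vertex set $\mathbb{Z}/k\mathbb{Z}\cup\mathbb{Z}/3k\mathbb{Z}$ which is an automorphism of the quiver with relations $(Q_k,I_k)$. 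A further normalization then lets us assume that $\theta$ sends each arrow to a scalar multiple of an arrow plus higher order terms.

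Next I would pin down the permutation part $\pi$. The inner vertices $\overline i$ have in- and out-degree $3$, while the outer vertices $i$ have in- and out-degree $1$, so $\pi$ preserves each of the two vertex classes; and since at most one arrow joins any ordered pair of vertices, $\pi$ permutes the $\alpha$'s among themselves and the $\beta$'s among themselves. Matching sources and targets of $\alpha_i$ and $\beta_i$ forces $\pi(i)\equiv\pi(0)+i\pmod k$ on outer vertices, and preservation of the nonzero straight compositions $\alpha_{i+1}\beta_i$ (equivalently, of the zero relations $\alpha_{i+k+1}\beta_i$, $\alpha_{i+2k+1}\beta_i$) forces $\pi$ to be an honest translation $i\mapsto i+c$ on $\mathbb{Z}/3k\mathbb{Z}$. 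Thus the possible permutation types are exactly the powers of the shift underlying the Nakayama automorphism $\nu$, and this cyclic group is isomorphic to $\mathbb{Z}/3k\mathbb{Z}$ because $\nu^j$ acts trivially on isomorphism classes of simples only when $3k\mid j$.

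I would then treat the automorphisms fixing every $e_x$. Those inducing the identity on $\mathrm{rad}/\mathrm{rad}^2$ (the unipotent ones) I claim are inner; this is the technical heart, and I would import it from the corresponding argument in \cite{VZ2}. The remaining, torus, part is recorded by nonzero scalars $(\lambda_a)$ on the arrows, subject only to the mesh relations $\lambda_{\beta_i}\lambda_{\alpha_i}=\lambda_{\beta_{i+k}}\lambda_{\alpha_{i+k}}=\lambda_{\beta_{i+2k}}\lambda_{\alpha_{i+2k}}$, while conjugation by a diagonal unit $\sum_x c_x e_x$ rescales $\lambda_a$ by $c_{t(a)}/c_{s(a)}$. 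Counting $6k$ arrows, the $2k$ independent mesh relations, and the $4k-1$ independent diagonal rescalings shows that the torus modulo inner automorphisms is one-dimensional, i.e. isomorphic to $\kk^*$. The automorphism $\theta_\varepsilon$ scaling $\beta_j$ by $\varepsilon$ for $k\mid j$ and fixing everything else is a valid representative, and a short computation with the coboundary equations shows it is non-inner for $\varepsilon\neq1$.

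Finally I would assemble the pieces. The permutation-type map $\Out(\Lambda_k)\to\mathbb{Z}/3k\mathbb{Z}$ is a split surjection, split by $\nu$, with kernel the torus part $\kk^*$. Conjugating $\theta_\varepsilon$ by $\nu$ only shifts the scaled orbit of $\beta$'s, and the difference from $\theta_\varepsilon$ is visibly a coboundary, hence represents the same class in $\kk^*$; so $\nu$ acts trivially and the split extension is the direct product $\mathbb{Z}/3k\mathbb{Z}\times\kk^*$. For $\Pic_0$, note that $\nu$ permutes the simples nontrivially, so no nontrivial power of it lies in $\Pic_0$ and hence $\Pic_0(\Lambda_k)\subseteq\kk^*$; that each $\theta_\varepsilon$ fixes every module up to isomorphism (so that equality holds) follows as in \cite{VZ2}. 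The main obstacle is the input that unipotent automorphisms are inner, together with the clean separation of the torus from the inner automorphisms in the presence of the mesh relations; everything else is bookkeeping on $Q_k$.
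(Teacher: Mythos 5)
Your proposal follows essentially the same route as the paper: reduce modulo inner automorphisms to automorphisms stabilizing the idempotents (the paper cites Pollack and Guil-Asensio--Saor\'{\i}n for this, where you reconstruct it and import the ``unipotent implies inner'' step from \cite{VZ2}), identify the permutation part with $\langle\nu\rangle\cong\mathbb{Z}/3k\mathbb{Z}$, and compute the torus of arrow scalings satisfying the mesh relations modulo diagonal conjugation to get $\kk^*$ (your dimension count $6k-2k-(4k-1)=1$ matches the paper's explicit normalization $\kappa_i=1$, $\varepsilon_i=1$ for $k\nmid i$, together with its multiplicative invariant). The argument is correct and no further comparison is needed.
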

\begin{proof} It follows from \cite{Pol} and \cite{GAS} that $\Out(\Lambda_k)\cong \Aut_S(\Lambda_k)/\big(\Inn(\Lambda_k)\cap \Aut_S(\Lambda_k)\big)$, where $\Aut_S(\Lambda_k)$ denotes the set of automorphisms of $\Lambda_k$ that stabilize the subalgebra generated by $e_x$ ($x\in\mathbb{Z}/k\mathbb{Z}\cup \mathbb{Z}/3k\mathbb{Z}$). It is clear that the idempotent $e_0$ can be sent only to an idempotent $e_i$ for some $i\in\mathbb{Z}/3k\mathbb{Z}$ and that its image determines the images of all other idempotents. Thus, modulo the Nakayama automorphism $\nu$ that generate a central subgroup $\mathbb{Z}/3k\mathbb{Z}$ in $\Out(\Lambda_k)$, any element of $\Out(\Lambda_k)$ can be represented by an automorphism fixing $e_x$ for any $x\in\mathbb{Z}/k\mathbb{Z}\cup \mathbb{Z}/3k\mathbb{Z}$. Such an automorphism simply sends $\alpha_i$ to $\kappa_i\alpha_i$ and $\beta_i$ to $\varepsilon_i\beta_i$ for some $\kappa_i,\varepsilon_i\in\kk^*$ ($i\in\mathbb{Z}/3k\mathbb{Z}$) such that $\kappa_i\varepsilon_i=\kappa_{i+k}\varepsilon_{i+k}$ for any $i\in\mathbb{Z}/3k\mathbb{Z}$. It is also not difficult to see that modulo a central element any invertible element $x$ such that the conjugation by $x$ belongs to $\Aut_S(\Lambda_k)$ can be represented in the form $x=\sum\limits_{x\in \mathbb{Z}/k\mathbb{Z}\cup \mathbb{Z}/3k\mathbb{Z}}\lambda_xe_x$ for some $\lambda_x\in\kk^*$. Applying a conjugation by such $x$, we can change the parameters $\kappa_i$ and $\varepsilon_i$ in such a way that $\kappa_i=1$ for any $i\in\mathbb{Z}/3k\mathbb{Z}$ and $\varepsilon_i=1$ for any $i\in\mathbb{Z}/3k\mathbb{Z}$ such that $k\nmid i$. Thus, we get a surjective homomorphism from $\mathbb{Z}/3k\mathbb{Z}\times \kk^*$ to $\Out(\Lambda_k)$ described in the assertion of the lemma. Moreover, one can show that conjugation by $x$ described above cannot turn the image of $\alpha\in\kk^*$ to identical automorphism, because such a conjugation preserves the value of $\prod_{i=0}^{k-1}\frac{\varepsilon_i}{\kappa_i}$. Thus, we get the required isomorphisms.
 \end{proof}

We will denote by $\widehat{\varepsilon}\in\Pic_0$ the image of $\varepsilon\in\kk^*$ under the isomorphism from Lemma \ref{Pic}.

\begin{lemma}\label{relcent} $(T_ET_{E'})^3\cong -\otimes (\Lambda_k)_{(\widehat{-1})^k\nu^{-3}}[5]$.
\end{lemma}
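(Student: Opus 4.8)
The plan is to realize $(T_ET_{E'})^3$ as a standard autoequivalence $-\otimes X$ and to identify the two-sided tilting complex $X$ with $(\Lambda_k)_{(\widehat{-1})^k\nu^{-3}}[5]$ in two stages: first pin down the shift and show that $X$ is a shift of an invertible bimodule, then compute the corresponding class in $\Out(\Lambda_k)$.

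First I would use that $T_E\cong -\otimes C_E$ and $T_{E'}\cong -\otimes C_{E'}$ are standard, so that $(T_ET_{E'})^3\cong-\otimes X$ with $X\cong(C_{E'}\otimes_{\Lambda_k}C_E)^{\otimes 3}$ a two-sided tilting complex. Specializing the object-level formulas from the $G_2$ computation to $E_i=P_{\overline i}$ and $E_i'=P_i$ gives $(T_ET_{E'})^3(P_{\overline i})\cong P_{\overline{i-3}}[5-m_{i-1}-m_{i-2}-m_{i-3}]$ and $(T_ET_{E'})^3(P_i)\cong P_{i-3}[5-m_{i-1}-m_{i-2}-m_{i-3}]$, so every indecomposable projective is sent to a shifted indecomposable projective. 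Since $X$ is pretilting and $Q_k$ is connected, so that $\Hom_{\Kbp_{\Lambda_k}}(P_x,P_y)\neq0$ for adjacent vertices $x,y$, the shifts attached to adjacent vertices must coincide; hence the shift is a constant $n$. Summing the identity $n=5-m_{i-1}-m_{i-2}-m_{i-3}$ over $i\in\mathbb{Z}/3k\mathbb{Z}$ and using $\sum_i m_i'=m'=0$ gives $3k(5-n)=0$, whence $n=5$.

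Next, $(T_ET_{E'})^3[-5]$ sends each projective to a projective concentrated in degree $0$; hence the radical representative of its tilting complex sits in degree $0$, so this autoequivalence lies in $\Pic(\Lambda_k)$ and $X\simeq(\Lambda_k)_\theta[5]$ for some $\theta\in\Aut(\Lambda_k)$. It remains to compute the class of $\theta$ in $\Out(\Lambda_k)\cong\mathbb{Z}/3k\mathbb{Z}\times\kk^*$ of Lemma \ref{Pic}. The induced permutation of vertices is $x\mapsto x-3$, which fixes the $\mathbb{Z}/3k\mathbb{Z}$-component as $\nu^{-3}$ (the sign of the exponent being determined by the convention $\psi\mapsto-\otimes(\Lambda_k)_{\psi^{-1}}$ relating $\Aut$ and $\Pic$). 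After composing with $\nu^{3}$ we may assume $\theta$ fixes all idempotents, so $\theta(\alpha_i)=\kappa_i\alpha_i$ and $\theta(\beta_i)=\varepsilon_i\beta_i$, and by Lemma \ref{Pic} its $\kk^*$-part is detected by the invariant $\prod_{i=0}^{k-1}\varepsilon_i/\kappa_i$. The task reduces to evaluating this scalar and showing that it equals $(-1)^k$, i.e. that the remaining $\kk^*$-part is $(\widehat{-1})^k$.

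Computing this scalar is the main obstacle. The plan is to track the scalars through an explicit homotopy equivalence $X\simeq(\Lambda_k)_\theta[5]$, in the spirit of the second part of the proof of \cite[Theorem 1]{VZ2}: one totalizes the bimodule complex $(C_{E'}\otimes_{\Lambda_k}C_E)^{\otimes 3}$, whose differentials are built from the multiplication maps $\mu_E,\mu_{E'}$ together with the sign conventions of the octahedral identifications used in Lemma \ref{actbr} and Corollary \ref{Gf} (the minus signs in matrices such as the one defining $T_ET_{E'}E_i$, and the choice of $\alpha,\beta$ with $\alpha+\beta\neq0$), and then reads off the action on the arrows. The defining mesh relations $\beta_i\alpha_i=\beta_{i+k}\alpha_{i+k}=\beta_{i+2k}\alpha_{i+2k}$ of $\Lambda_k$ are what force the three $D_4$-branches $i,i+k,i+2k$ to be compared, and the accumulated sign over one full period of the $\mathbb{Z}/k\mathbb{Z}$-indexing contributes one factor $-1$ per block, yielding $\prod_{i=0}^{k-1}\varepsilon_i/\kappa_i=(-1)^k$. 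Combining the permutation part $\nu^{-3}$, the scalar part $(\widehat{-1})^k$ and the shift $[5]$ gives the asserted isomorphism.
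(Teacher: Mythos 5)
Your reduction is sound up to the last step: pinning down the shift as $[5]$ via the object-level formulas and connectivity of $Q_k$, concluding that $(T_ET_{E'})^3[-5]$ lies in $\Pic(\Lambda_k)$, and reading off the $\mathbb{Z}/3k\mathbb{Z}$-component of its class in $\Out(\Lambda_k)$ as $\nu^{-3}$ are all correct. But the entire content of the lemma is concentrated in the $\kk^*$-component, and there your argument stops being a proof. An element of $\Pic_0(\Lambda_k)$ acts trivially on every object, so no amount of object-level bookkeeping --- the octahedral identifications, the matrices with $\pm h$ entries, the choice of $\alpha,\beta$ with $\alpha+\beta\neq 0$ --- can detect it: those computations take place in the triangulated category and determine cones only up to non-canonical isomorphism. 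To pin down $\prod_{i=0}^{k-1}\varepsilon_i/\kappa_i$ you must build an explicit homotopy equivalence between the totalization of $(C_{E'}\otimes_{\Lambda_k}C_E)^{\otimes 3}$ and $(\Lambda_k)_\theta[5]$ in the category of bimodule complexes and track every scalar through it. The sentence ``the accumulated sign over one full period contributes one factor $-1$ per block'' asserts the answer rather than deriving it; nothing in the sketch explains why the contribution per block is $-1$ rather than $+1$ or some other unit. This is a genuine gap, and it is exactly the computation the lemma is about.

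The paper sidesteps this computation by a different device. It first observes, as you do, that composing $(T_ET_{E'})^3$ with $-\otimes(\Lambda_k)_{\nu^{3}}$ and the appropriate shift lands in $\Pic_0(\Lambda_k)$. It then uses the canonical map from $\TrPic(\Lambda_k)$ to stable auto-equivalences of bimodules (\cite[Section 3.4]{RZ}), which is injective on $\Pic_0(\Lambda_k)$, kills the projective bimodule summands of $C_E$ and $C_{E'}$ (so that $T_E$ and $T_{E'}$ are sent to $\Lambda_k$), and sends $[-5]$ to $\Omega^5_{\Lambda_k^{\rm op}\otimes_\kk\Lambda_k}(\Lambda_k)$. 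The lemma thereby reduces to the bimodule isomorphism $\Omega^5_{\Lambda_k^{\rm op}\otimes_\kk\Lambda_k}(\Lambda_k)\cong(\Lambda_k)_{(\widehat{-1})^k\nu^{-3}}$, which is quoted from the explicit bimodule resolution of \cite[Section 3]{GIV}. Completing your route would in effect mean redoing that resolution; either carry out the scalar computation in full or invoke the known $\Omega^5$ computation as the paper does.
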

\begin{proof} It follows from the formula for $(T_ET_{E'})^3$ from the previous section that $(T_ET_{E'})^3\big(P\otimes (\Lambda_k)_{\nu^{3}}[-5]\big)\cong P$ for any projective $\Lambda_k$-module $P$, and hence $(T_ET_{E'})^3\circ \big(-\otimes (\Lambda_k)_{\nu^{3}}[5]\big)\in\Pic_0$.
There is a canonical map from $\TrPic(\Lambda_k)$ to the group of isomorphism classes of $\Lambda_k$-bimodules inducing stable autoequivalences
of $\Lambda_k$ (see \cite[Section 3.4]{RZ} for details). This map is injective on $\Pic_0(\Lambda_k)$. At the same
time it sends $T_E$ and $T_{E'}$ to $\Lambda_k$ and $[-5]$ to $\Omega^5_{\Lambda_k^{\rm op}\otimes_\kk\Lambda_k}(\Lambda_k)$. So it is enough to prove that $\Omega^5_{\Lambda_k^{\rm op}\otimes_\kk\Lambda_k}(\Lambda_k)\cong (\Lambda_k)_{(\widehat{-1})^k\nu^{-3}}$, but it follows from \cite[Section 3]{GIV}.
\end{proof}

\begin{lemma}\label{gener}
Let $X\in\Kbp_{\Lambda_k}$ be a tilting complex. Then there is some autoequivalnce $\Phi$ belonging the group generated by $T_E$ and $T_E'$ such that the radical representative of $\Phi X$ is concentrated in one degree.
\end{lemma}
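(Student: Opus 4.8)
The plan is to argue by induction on the length $L(X)$ of the radical representative of $X$, lowering $L$ by successive applications of $T_E$ and $T_{E'}$. If $L(X)=1$ we take $\Phi=\Id$, so assume $L(X)=n>1$ and that the statement holds for all tilting complexes of smaller length. Throughout I use two structural facts recorded above: the indecomposable projective summands of each term of a radical complex split into the two families $\{P_{\overline{i}}\mid i\in\mathbb{Z}/k\mathbb{Z}\}$ (the summands of $E$) and $\{P_{i}\mid i\in\mathbb{Z}/3k\mathbb{Z}\}$ (the summands of $E'$), and $T_E$, $T_{E'}$ are realized by tensoring with the explicit bimodule complexes $C_E$, $C_{E'}$.

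First I would describe the effect of a single twist on the extreme terms. Writing $X=(X^l\to\cdots\to X^r)$ in radical form, tensoring with $C_E$ presents $T_EX$ as the cone of the evaluation $\bigoplus_{\overline{i}}(Xe_{\overline{i}})\otimes_\kk P_{\overline{i}}\xrightarrow{\mu}X$ induced by $\mu_E$, and similarly for $T_{E'}$ via $\mu_{E'}$. At the top degree $r$ the component of $\mu$ landing on the family-$E$ summands $Q_E\subseteq X^r$ coming from the projective tops is a split surjection; hence in passing to the radical representative of $T_EX$ one cancels $Q_E$ against an isomorphic summand sitting one degree lower. Thus $T_E$ removes the family-$E$ part of the top term, and symmetrically $T_{E'}$ removes its family-$E'$ part. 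Choosing the twist according to which family occurs in $X^r$ therefore strips the top term down to the other family.

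The subtlety, and the main obstacle, is that these cancellations simultaneously enlarge the term one step below and, at the opposite end, the twist prolongs the complex past the bottom degree $l$; a single twist thus tends to \emph{shift} the support of $X$ rather than to shorten it. To obtain a genuine decrease of $L$ I would combine the top reduction with the constraints coming from the tilting condition $\Hom_{\Kbp_{\Lambda_k}}(X,X[i])=0$ for $i\neq 0$ and from the $\nu$-stability $X_\nu\cong X$ in $\Cb_{\Lambda_k}$: these link the family content of the bottom term $X^l$ to that of the top term $X^r$ and forbid the self-extensions that a nonminimal leftover summand would create. Concretely, one shows that after the appropriate family is stripped from $X^r$, the induced evaluation at the bottom cancels against $X^l$ as well, so that the net length strictly drops; when both families occur in $X^r$ one applies the twist removing one of them and then iterates, using the braid relations of $\sB_G$ from Lemma \ref{actbr} to keep the whole process inside $\langle T_E,T_{E'}\rangle$. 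This verification—that the twist can always be chosen so that $L$ strictly decreases rather than merely shifts—is the crux, and it is exactly the point where I would follow the proof of the second part of \cite[Theorem 1]{VZ2} most closely, the role of the minimality of the radical representative and of the vanishing of self-extensions being essential.

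Finally, once a twist $\Phi_0\in\langle T_E,T_{E'}\rangle$ with $L(\Phi_0X)<L(X)$ has been produced, the induction hypothesis applied to the tilting complex $\Phi_0X$ yields $\Psi\in\langle T_E,T_{E'}\rangle$ such that $\Psi\Phi_0X$ is concentrated in one degree, and $\Phi=\Psi\Phi_0$ is the desired autoequivalence.
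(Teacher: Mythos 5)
Your reduction to showing that $L$ can be strictly decreased by some element of $\langle T_E,T_{E'}\rangle$, and your analysis of a single twist (cancellation of one family of projectives in the top term, extension of the complex by one new term at the bottom), both match the paper. But the crux --- why the length eventually \emph{decreases} rather than the support merely shifting forever --- is exactly the step you leave open, and the mechanism you sketch for it is not the right one. You assert that ``after the appropriate family is stripped from $X^r$, the induced evaluation at the bottom cancels against $X^l$ as well, so that the net length strictly drops.'' This is not true for a single twist in general: the new bottom term of $T_{E'}X$ consists of copies of the $P_i$ and there is no reason for it to vanish, and indeed the paper's proof must allow for the possibility that each twist merely shifts the window of degrees down by one. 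Deferring this verification to \cite[Theorem 1]{VZ2} does not fill the gap, because the decisive input here is specific to $\Lambda_k$.

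The paper's actual resolution is as follows. If no element of $\langle T_E,T_{E'}\rangle$ shortens $X$, then the forced alternation (after $T_{E'}$ the nonzero top term consists only of summands $P_{\overline{i}}$, so the next effective twist is $T_E$, and so on) shows that the radical representative of $(T_ET_{E'})^3X$ is concentrated in degrees $-6$ to $L-7$, with a nonzero term in degree $-6$. On the other hand, Lemma \ref{relcent} gives $(T_ET_{E'})^3\cong -\otimes(\Lambda_k)_{(\widehat{-1})^k\nu^{-3}}[5]$, so $(T_ET_{E'})^3X$ is concentrated in degrees $-5$ to $L-6$, contradicting the uniqueness of the radical representative. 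It is this numerical mismatch (six twists versus a shift by five), not the braid relation of Lemma \ref{actbr} nor any cancellation at the bottom end, that forces the length to drop at some stage. Without this (or an equivalent) argument your induction does not get off the ground.
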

\begin{proof} It is enough to prove that for any tilting complex $X$ with $L(X)>1$ there is some $\Phi$ in  the group generated by $T_E$ and $T_E'$ such that $L(\Phi X)<L(X)$. We may assume that $X$ is radical and is concentrated in degrees from $0$ to $L-1$, where $L=L(X)>1$. Then either $P_i$ or $P_{\overline{i}}$ is a direct summand of $X_{L-1}$ for some $i\in\mathbb{Z}/3k\mathbb{Z}$. We will consider the case where $P_i$ is a direct summand of $X_{L-1}$, the second case is analogous. It follows from the condition $X_{\nu}\cong X$ that $P_i$ is a direct summand of $X_{L-1}$ for any $i\in\mathbb{Z}/3k\mathbb{Z}$.
If $P_i$ is a direct summand of $X_0$ for some $i\in\mathbb{Z}/3k\mathbb{Z}$, then the map $\alpha_{i}\beta_{i-1}:P_{i-1}\rightarrow P_i$ induces a map from $X_{L-1}$ to $X_0$ that is annihilated by $d_1$ and $d_{L-1}$, because $X$ is radical and $J_{\Lambda_k}\alpha_{i}\beta_{i-1}=\alpha_{i}\beta_{i-1}J_{\Lambda_k}=0$. Thus, we obtain a nonzero morphism from $X$ to $X[1-L]$ in $\Kbp_{\Lambda_k}$ that is impossible. Then $P_{\overline{i}}$ is a direct summand of $X_{0}$ for some $i\in\mathbb{Z}/k\mathbb{Z}$ and one can shows that in fact $P_{\overline{i}}$ is a direct summand of $X_{0}$ and is not a direct summand of $X_{L-1}$ for any $i\in\mathbb{Z}/k\mathbb{Z}$. Let us apply $T_{E'}$ to $X$. Without loss of generality, we may assume that $T_{E'}X$ is radical. It follows from $X\cong -\otimes_{\Lambda_k}C_{E'}$ that $T_{E'}X$ is concentrated in degrees from $-1$ to $L-1$, all direct summands of $(T_{E'}X)_{-1}$ are isomorphic to $P_i$ for some $i\in\mathbb{Z}/3k\mathbb{Z}$ and all direct summands of $(T_{E'}X)_{L-1}$ are isomorphic to direct summands of $X_{L-1}$, i.e. to $P_i$ for some $i\in\mathbb{Z}/3k\mathbb{Z}$. On the other hand, $\Hom_{\Kbp_{\Lambda_k}}(T_{E'}X[L-1],P_i)=\Hom_{\Kbp_{\Lambda_k}}(X[L-1],P_i[-1])=0$, and hence $P_i$ cannot be a direct summand of $(T_{E'}X)_{L-1}$. Thus, $T_{E'}X$ is concentrated in degrees from $-1$ to $L-2$. If $(T_{E'}X)_{-1}=0$, then the required assertion is proved. In the opposite case $P_{i}$ cannot be a direct summand of $X_{l-2}$ for $i\in\mathbb{Z}/3k\mathbb{Z}$, and hence all direct summands of $X$ are isomorphic to $P_{\overline{i}}$ for $i\in\mathbb{Z}/k\mathbb{Z}$. Let us apply $T_E$ to $T_{E'}X$ assuming that the resulting complex is radical. The same argument as above shows that $T_ET_{E'}X$ is concentrated in degrees from $-2$ to $L-3$ and if $(T_ET_{E'}X)_{-2}\not=0$, then all direct summands of $(T_ET_{E'}X)_{L-3}$ are isomorphic to $P_i$ with $i\in\mathbb{Z}/3k\mathbb{Z}$. Continuing this process, we get that if $L(\Phi X)\ge L(X)$ for any $\Phi$ from the group generated by $T_E$ and $T_E'$, then we may assume that $(T_ET_{E'})^3X$ is concentrated in degrees from $-6$ to $L-7$.
On the other hand, $(T_ET_{E'})^3\cong -\otimes (\Lambda_k)_{(\widehat{-1})^k\nu^{-3}}[5]$ by Lemma \ref{relcent}, and hence $(T_ET_{E'})^3X\cong X_{(\widehat{-1})^k}[5]$ is concentrated in degrees from $-5$ to $L-6$. The obtained contradiction finishes the proof of the lemma.
 \end{proof}

\begin{theorem} $\TrPic(\Lambda_k)\cong (\sB_G\times\mathbb{Z}\times\mathbb{Z}/3k\mathbb{Z}\times\kk^*)/\big(\Delta_G^{-1},5,3,(\widehat{-1})^k\big)$. Under this isomorphism the standard generators $\sigma_1$ and $\sigma_2$ of $\sB_G$ correspond to $T_E$ and $T_{E'}$, the generator $1\in\mathbb{Z}$ corresponds to the shift functor $[1]$, the generator $1\in\mathbb{Z}/3k\mathbb{Z}$ corresponds to the Nakayama automorphism $\nu$ and $\varepsilon\in\kk^*$ corresponds to the automorphism $\widehat{\varepsilon}\in\Pic_0(\Lambda_k)$.
\end{theorem}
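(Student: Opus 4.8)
The plan is to build an explicit homomorphism $\Psi$ from $\Pi:=\sB_G\times\mathbb{Z}\times\mathbb{Z}/3k\mathbb{Z}\times\kk^*$ to $\TrPic(\Lambda_k)$, show that it kills the element $R:=(\Delta_G^{-1},5,3,(\widehat{-1})^k)$, and prove that the induced map $\bar\Psi\colon\Pi/\langle R\rangle\to\TrPic(\Lambda_k)$ is an isomorphism. I would define $\Psi$ on the four factors by $\sigma_1\mapsto T_E$, $\sigma_2\mapsto T_{E'}$; $1\in\mathbb{Z}\mapsto[1]$; $1\in\mathbb{Z}/3k\mathbb{Z}\mapsto\nu$; and $\varepsilon\in\kk^*\mapsto\widehat\varepsilon$. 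To see that $\Psi$ is a well-defined homomorphism I would check two things. First, item \eqref{g2} of Theorem \ref{main} (applicable here since $m=0$, so $3m\neq 5k$) shows that $T_E,T_{E'}$ satisfy the $G_2$-braid relation and generate a copy of $\sB_G$, i.e. the restriction $\gamma\colon\sB_G\to\TrPic(\Lambda_k)$ is injective. Second, the four factors commute pairwise: the shift is central, the Nakayama autoequivalence $\nu$ is central by \cite{Ric2}, the group $\Pic(\Lambda_k)\cong\mathbb{Z}/3k\mathbb{Z}\times\kk^*$ is abelian by Lemma \ref{Pic}, and $\widehat\varepsilon$ commutes with $T_E$ and $T_{E'}$ because $\widehat\varepsilon\in\Pic_0(\Lambda_k)$ fixes every module, so $\widehat\varepsilon E\cong E$ and $\widehat\varepsilon E'\cong E'$ and Lemma \ref{comm} gives $\widehat\varepsilon T_E\widehat\varepsilon^{-1}=T_{\widehat\varepsilon E}=T_E$, and likewise for $E'$. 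Finally Lemma \ref{relcent} identifies $\Psi(\Delta_G)=(T_ET_{E'})^3$ with $\nu^3[5]$ twisted by the scalar $(\widehat{-1})^k$, which is exactly $\Psi$ applied to the last three coordinates of $R$; hence $R\in\ker\Psi$ and $\Psi$ descends to $\bar\Psi$.

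For surjectivity I would take an arbitrary $F\in\TrPic(\Lambda_k)$ with associated tilting complex $X=F(\Lambda_k)$ and apply Lemma \ref{gener} to find $\Phi$ in the group generated by $T_E,T_{E'}$ such that the radical representative of $\Phi(X)$ is concentrated in a single degree. By the membership criterion recalled before the theorem, a one-term tilting complex represents an element of $\Pic(\Lambda_k)$ up to shift, so $\Phi F=\widehat G[j]$ for some $\widehat G\in\Pic(\Lambda_k)$ and some $j\in\mathbb{Z}$; Lemma \ref{Pic} writes $\widehat G=\nu^a\widehat\varepsilon$. Therefore $F=\Phi^{-1}\nu^a\widehat\varepsilon[j]$ lies in the image of $\bar\Psi$.

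The core of the argument is injectivity, where the braid contribution must be separated from the central one. Suppose $\Psi(\beta,n,a,\varepsilon)=\gamma(\beta)[n]\nu^a\widehat\varepsilon=\Id$. Each of $[1]$, $\nu$ and $\widehat\varepsilon$ fixes both $E$ and $E'$ up to the relation $\sim$ (the shift trivially, $\nu$ because it permutes the summands of $E$ and of $E'$ cyclically, and $\widehat\varepsilon$ because it fixes all modules), so each acts trivially on $\Sph_{E,E'}$; consequently $\gamma(\beta)$ acts trivially on $\Sph_{E,E'}$. Since the action of $\sB_G/Z_G$ on $\Sph_{E,E'}$ is faithful by Corollary \ref{Gf}, this forces $\beta\in Z_G=\langle\Delta_G\rangle$, say $\beta=\Delta_G^t$. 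Rewriting $\gamma(\Delta_G^t)$ via Lemma \ref{relcent} turns the relation into an identity $\nu^{3t+a}[5t+n]\widehat{(-1)^{kt}\varepsilon}=\Id$ inside the shifted Picard group $\langle[1]\rangle\times\Pic(\Lambda_k)$. Because the shift amount, the class in $\mathbb{Z}/3k\mathbb{Z}$ and the scalar in $\kk^*$ are independent invariants — the shift because a nonzero shift is never an element of $\Pic(\Lambda_k)$, the other two by Lemma \ref{Pic} — I would read off $5t+n=0$, $a\equiv-3t\pmod{3k}$ and $\varepsilon=(-1)^{kt}$. These three equalities say precisely that $(\beta,n,a,\varepsilon)=R^{-t}$, so $\ker\Psi=\langle R\rangle$ and $\bar\Psi$ is injective, completing the isomorphism.

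The main obstacle I anticipate is this final disentangling. One cannot compare the three central invariants directly, because the braid part $\gamma(\beta)$ need not be central; the essential point is to first use the faithful action on $\Sph_{E,E'}$ to confine $\beta$ to the center $\langle\Delta_G\rangle$, and only afterwards — having replaced $\Delta_G^t$ by its image $\nu^{3t}[5t](\widehat{-1})^{kt}$ from Lemma \ref{relcent} — to work entirely inside the abelian group $\langle[1]\rangle\times\Pic(\Lambda_k)$. Verifying that the shift, the Nakayama class and the $\Pic_0$-scalar genuinely form a direct product with no hidden relation is the most delicate bookkeeping, but it reduces to Lemma \ref{Pic} together with the fact that the length of the radical representative detects the shift.
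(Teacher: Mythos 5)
Your proposal is correct and follows essentially the same route as the paper: define the homomorphism on the product, kill the relation via Lemma \ref{relcent}, get surjectivity from Lemma \ref{gener} together with the characterization of $\Pic(\Lambda_k)$ inside $\TrPic(\Lambda_k)$, and compute the kernel by first confining the braid coordinate to $Z_G$ and then separating shift, Nakayama class and scalar via Lemma \ref{Pic}. The only (harmless) deviation is that you confine $\beta$ to the center by invoking the faithful action on $\Sph_{E,E'}$ from Corollary \ref{Gf}, whereas the paper uses the centrality of $[n]\nu^a\widehat{\varepsilon}$ in $\TrPic(\Lambda_k)$ together with the injectivity of the restriction to $\sB_G$; both are valid.
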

\begin{proof} One can directly construct an isomorphism between $C\otimes_{\Lambda_k}(\Lambda_k)_{\varepsilon^{-1}}$ and $(\Lambda_k)_{\varepsilon^{-1}}\otimes_{\Lambda_k}C$ for $C=C_E,C_{E'}$ (see the proof of \cite[Proposition 3]{VZ2}). Since the shift functor and the Nakayama functor commute with any standard derived equivalence we get a homomorphism $\phi:\sB_G\times\mathbb{Z}\times\mathbb{Z}/3k\mathbb{Z}\times\kk^*\rightarrow \TrPic(\Lambda_k)$ described in the theorem. Let us prove that the kernel of this homomorphism is generated by the element $(\Delta_G^{-1},5,3,(\widehat{-1})^k)$ that belongs to the kernel by Lemma \ref{relcent}. Suppose that the element $(w,a,b,\widehat{\varepsilon})$ belongs to $\Ker\phi$. Then $\phi(w)$ commutes with $\phi(w')$ for any $w'\in \sB_G$. Since $\phi|_{\sB_G}$ is injective by item \ref{g2} of Theorem \ref{main}, we have $w\in Z_G$, i.e. $w=\Delta_G^t$ for some integer $t$. Then
$$(w,a,b,\widehat{\varepsilon})\left(\Delta_G^{-t},5t,3t,(\widehat{-1})^{kt}\right)=\left(1,a+5t,b+3t,(\widehat{-1})^{kt}\widehat{\varepsilon}\right)\in\Ker\phi.$$
But since the intersection of $\Pic(\Lambda_k)$ with the subgroup of $\TrPic(\Lambda_k)$ generated by the shift is trivial, we have $a=-5t$, $3k\mid b-3t$ and $(-1)^{kt}\varepsilon=1$ by Lemma \ref{Pic}, i.e. $(w,a,b,\widehat{\varepsilon})=\left(\Delta_G^{-1},5,3,(\widehat{-1})^{k}\right)^{-t}$.

It remains to prove that $\phi$ is surjective. But it follows from Lemma \ref{gener} that for any $\Phi\in\TrPic(\Lambda_k)$ there is some $w\in\sB_G$ such that $\phi(w)\Phi$ belongs to the direct product of $\Pic(\Lambda_k)$ and the subgroup of $\TrPic(\Lambda_k)$ generated by the shift functor. Since the $\Im\phi$ contains $\Pic(\Lambda_k)$ and the shift functor, we have $\Phi\in\Im\phi$ and the theorem is proved.
 \end{proof}

{\bf Acknowledgements.} The work was supported by  RFBR according to the research project 18-31-20004, by the President’s ”Program
Support of Young Russian Scientists”  according to the research project MK-2262.2019.1 and in part by Young Russian Mathematics award.

\end{document}